 \pgfplotsset{compat=1.14}
\newcommand*{\rom}[1]{\expandafter\@slowromancap\romannumeral #1@}
\newcommand{\gettikzxy}[3]{%
  \tikz@scan@one@point\pgfutil@firstofone#1\relax
  \edef#2{\the\pgf@x}%
  \edef#3{\the\pgf@y}%
}
\def\namedlabel#1#2{\begingroup
	#2%
	\def\@currentlabel{#2}%
	\phantomsection\label{#1}\endgroup
}
\newcommand{\sB}{\mathcal B}
\newcommand{\sC}{\mathcal P}
\newcommand{\sD}{\mathcal D}
\newcommand{\sF}{\mathcal F}
\newcommand{\sI}{\mathcal I}
\newcommand{\sM}{\mathcal M}
\newcommand{\sP}{\mathcal P}
\newcommand{\sT}{\mathcal T}
\newcommand{\sX}{\mathcal X}
\newcommand{\sY}{\mathcal Y}
\newcommand{\R}{\mathbb R}
\newcommand{\argmin}{\mbox{argmin}}
\newcommand{\argmax}{\mbox{argmax}}
\newcommand{\Leb}{\mbox{Leb}}
\newcommand{\supp}{\mbox{supp}}
\newtheorem{theorem}{Theorem}[section]
\newtheorem{proposition}{Proposition}[section]
\newtheorem{lemma}{Lemma}[section]
\newtheorem{corollary}{Corollary}[section]
\newtheorem{remark}{Remark}[section]
\newtheorem{defn}{Definition}[section]
\renewcommand{\email}[2][]{%
	\ifx\emails\@empty\relax\else{\g@addto@macro\emails{,\space}}\fi%
	\@ifnotempty{#1}{\g@addto@macro\emails{\textrm{(#1)}\space}}%
	\g@addto@macro\emails{#2}%
}
\numberwithin{equation}{section}
\begin{document}
\title[Generalized shadow measure and WOT]{Generalizing Super/Sub MOT using weak $L^1$ transport}

\author{Erhan Bayraktar}
\address{Department of Mathematics, University of Michigan}
\email{erhan@umich.edu}

\author{Dominykas Norgilas}
\email{dnorgil@ncsu.edu}
\address{Department of Mathematics, North Carolina State University}
\thanks{E. Bayraktar is partially supported by the National Science Foundation under grant  DMS-2106556 and by the Susan M. Smith chair.} 

\keywords{Optimal transport, barycentric costs, stochastic order, martingales.}
\subjclass[2020]{Primary: 60G42; Secondary: 49N05.}

%\author{
%Erhan Bayraktar\thanks{\textit{erhan@umich.edu}}\hspace{10mm}
%Shuoqing Deng\thanks{\textit{shuoqing@umich.edu}} \hspace{10mm}
%Dominykas %Norgilas\thanks{\textit{dnorgila@umich.edu}}\vspace{5mm}\\Departmentof Mathemati%cs, University of Michigan
%}

\begin{abstract}
In this article we revisit the weak optimal transport (WOT) problem, introduced by Gozlan, Roberto, Samson and Tetali \cite{gozlan2017kantorovich}. We work on the real line, with barycentric cost functions, and as our first result give the following characterization of the set of optimal couplings for two probability measures $\mu$ and $\nu$: every optimizer couples the left tails of $\mu$ and $\nu$ using a submartingale, the right tails using a supermartingale, while the central region is coupled using a martingale.

We then consider a constrained optimal transport problem, where admissible transport plans are only those that are optimal for the WOT problem with $L^1$ costs. The constrained problem generalizes the (sub/super-) martingale optimal transport problems, studied by Beiglb\"ock and Juillet \cite{BeiglbockJuillet:16}, and Nutz and Stebegg \cite{NutzStebegg.18} among others. 

Finally we introduce a generalized \textit{shadow measure} and establish its connection to the WOT. This extends and generalizes the results obtained in (sub/super-) martingale settings.
\end{abstract}

\maketitle
%\tableofcontents
\section{Introduction}
For two probability measures on Polish spaces $\mathcal X,\mathcal Y$, denoted by $\mu\in\sP(\mathcal X)$ and $\nu\in\sP(\mathcal Y)$, and a cost function $c:\mathcal X\times\sP(\mathcal Y)\to\R$, the \textit{weak} optimal transport (WOT) problem is to 
\begin{equation}\label{eq:mainProblem}
\textrm{minimize}\quad \Pi(\mu,\nu)\ni\pi\mapsto\int_{\mathcal X} c(x,\pi_x)\mu(dx),
\end{equation}
where $\Pi(\mu,\nu)$ is the set of probability measures on $\mathcal X\times\mathcal Y$ with first and second marginals $\mu$ and $\nu$, respectively, and, for each $\pi\in\Pi(\mu,\nu)$, $(\pi_x)_{x\in\mathcal X}$ is the $\mu$-a.s. unique family of probability measures such that $\pi(dx,dy)=\mu(dx)\pi_x(dy)$.

This type of transportation cost first appeared in the works of Marton \cite{marton1996measure, Marton1996BoundingB} and Talagrand \cite{talagrand1995concentration, talagrand1996new}, where the authors studied the concentration of measure phenomenon via transport-entropy inequalities. The rather recent works of Gozlan et al. \cite{gozlan2017kantorovich, gozlan2018characterization} (see also Aliberti et al. \cite{alibert2019new}), on the other hand, provide a first systematic study of this problem, including general definitions and Kantorovich-type duality results. \cite{gozlan2017kantorovich} stimulated several research groups to further study \eqref{eq:mainProblem} in various contexts (see, for example, \cite{alfonsi2017sampling, fathi2018curvature, gozlan2020mixture, shu2020hopf, shu2018hamilton}). Together with duality, essentially all the fundamental theoretical results of the classic optimal transport (existence of optimizers, cyclical monotonicity, stability) were extended to the weak setting, see \cite{backhoff2019existence, backhoff2020martingale, backhoff2022stability, gozlan2020mixture}. Consult also \cite{backhoff2022applications} for a list of applications and connections of weak optimal transport to other fields (including Schr\"odinger problem, Brenier-Strassen theorem, optimal mechanism design, linear transfers and semimartingale transport).

\subsection{Our contribution} In this paper we investigate \eqref{eq:mainProblem} on the real line (so that $\sX=\sY=\R$, and $\mu,\nu\in\sP(\R)$), and with \textit{barycentric} costs. In particular, for a convex $h:\R\to\R$, the goal is to
\begin{equation}\label{eq:mainProblemBary}
    minimize\quad \pi\mapsto\int_{\R} h\left(x-\int_\R y\pi_x(dy)\right)\mu(dx)\quad over\quad\Pi(\mu,\nu).
\end{equation}
This case (in one and higher dimensions) has received particular attention, see \cite{alfonsi2017sampling, backhoff2019existence, fathi2018curvature, gozlan2020mixture, gozlan2018characterization, gozlan2017kantorovich, shu2020hopf, shu2018hamilton, backhoff2020adapted, backhoff2020weak}.

\textit{Section \ref{sec:wot}: WOT with $L^1$ costs.} We first study \eqref{eq:mainProblemBary} with $h(\cdot)=\lvert \cdot\lvert$. In this case we show that $\pi\in\Pi(\mu,\nu)$ is an optimizer for \eqref{eq:mainProblemBary} \textit{if and only if} it has a rather simple structure (see Theorem \ref{thm:coupling}): there exists $x^-\leq x^+$ (which are explicitly given in terms of the so-called potential functions of the underlying measures (see Definition \ref{eq:x-x+})) such that
\begin{align}\label{eq:L1optimal}
\overline{\pi_x}\geq x\quad&\textrm{and}\quad\supp(\pi_x)\subseteq(-\infty,x^-],\quad\textrm{for all }x\leq x^-,\nonumber\\
\overline{\pi_x}= x\quad&\textrm{and}\quad\supp(\pi_x)\subseteq[x^-,x^+],\quad\textrm{for all }x^-<x<x^+,\\
\overline{\pi_x}\leq x\quad&\textrm{and}\quad\supp(\pi_x)\subseteq[x^+,\infty),\quad\textrm{for all }x^+\leq x,\nonumber
\end{align}
where $\overline\eta$ and $\supp(\eta)$ denote the mean and the support of a measure $\eta$. \eqref{eq:L1optimal} essentially means that $\pi$ is optimal if and only if it couples the left (resp. right) tails of $\mu$ and $\nu$ using a submartingale (resp. supermartingale), while the restrictions of both measures to the central region $(x^-,x^+)$ are coupled via martingale. 

We next investigate the implications of the decomposition \eqref{eq:L1optimal} (obtained for the absolute value cost function) to the more general problem \eqref{eq:mainProblemBary} when $h$ is an arbitrary convex function. In particular, we show that even in this case any optimizer of \eqref{eq:mainProblemBary} admits decomposition \eqref{eq:L1optimal}; see Corollary \ref{cor:valueGeneral}. From this we conclude that the only interesting cases of the problem \eqref{eq:L1optimal} essentially are those with marginals that are either in convex increasing or convex decreasing order (i.e., either $\mu\leq_{ci}\nu$ or $\mu\leq_{cd}\nu$, so that the marginals can be coupled using submartingale or supermartingale, respectively). 

The proof of the above result (i.e., that the decomposition \eqref{eq:L1optimal} is still valid for the optimizers of \eqref{eq:mainProblemBary} with general $h$) relies on the following equivalence obtained in \cite{gozlan2017kantorovich}:
\begin{equation}\label{eq:equivalence}
\inf_{\pi\in\Pi(\mu,\nu)}\int_{\R} h\left(x-\int_\R y\pi_x(dy)\right)\mu(dx)=\inf_{\mu^*\leq_c\nu}\inf_{\pi^*\in\Pi(\mu,\mu^*)}\int_{\R\times\R}h(x-y)\pi^*(dx,dy).
\end{equation}
(Here $\leq_c$ denotes the convex order, i.e., if $\eta\leq_c\chi$, then there exists a martingale coupling of $\eta$ and $\chi$ and vice versa; see section \ref{sec:prelims} for precise definitions.) While there are many optimal couplings for \eqref{eq:mainProblemBary}, there is only one $\mu^*$ that minimizes the right hand side of \eqref{eq:equivalence}. In particular, there exists a $\mu$-a.s. unique map $T^*:\R\to\R$, such that the push-forward measure $\mu^*=T^*(\mu)$ is optimal; see \cite{gozlan2018characterization, alfonsi2017sampling, shu2018hamilton, backhoff2020weak}. An important feature of $\mu^*$ (or, equivalently, of $T^*$) is that it does not depend on the cost function $h$ (we note that this feature fails in higher dimensions, see \cite[Example 2.4]{alfonsi2017sampling}). Using this, together with the fact that \eqref{eq:L1optimal} is valid when $h(\cdot)=\lvert\cdot\lvert$, we obtain that (see Proposition \ref{prop:optimalT})
\begin{align}\label{eq:Toptimal}
x\mapsto (T^*(x)-x)\quad \textrm{ is positive and decreasing on}\quad (-\infty,x^-),\nonumber\\
T^*(x)=x \quad\textrm{for all}\quad x\in[x^-,x^+],\\
x\mapsto (T^*(x)-x)\quad \textrm{ is negative and decreasing on}\quad (x^+,\infty).\nonumber
\end{align}
With \eqref{eq:Toptimal} (and also the equality \eqref{eq:equivalence}) at hand, we then come back to the general problem \eqref{eq:mainProblemBary} and (by borrowing some results from \cite{gozlan2020mixture}) deduce the decomposition \eqref{eq:L1optimal}.

\textit{Section \ref{sec:cot}: Constrained Optimal Transport.} For a given cost function $c:\R^2\to\R$, and a pair of marginals $(\mu,\nu)$, the main goal of the classical optimal transport theory is to minimize $\pi\mapsto\int_{\R^2}cd\pi$ over the set of couplings $\Pi(\mu,\nu)$. In this section the goal is similar: we aim to 
\begin{equation}\label{eq:constrainedOTintro}
minimize\quad\pi\mapsto \int_{\R^2}c(x,y)\pi(dx,dy)\quad over\quad \Pi^*(\mu,\nu),
\end{equation}
where $\Pi^*(\mu,\nu)\subseteq \Pi(\mu,\nu)$ is the set of couplings that satisfy \eqref{eq:L1optimal}. In other words, leveraging the results of section \ref{sec:wot}, we have that the only admissible couplings (for \eqref{eq:constrainedOTintro}) are those that solve $\eqref{eq:mainProblemBary}$ with $h(\cdot)=\lvert\cdot\lvert$.

The problem \eqref{eq:constrainedOTintro} generalizes the martingale and supermartingale optimal transport problems that were explicitly studied for various specific cost functions; see, for example, \cite{BeiglbockJuillet:16, NutzStebegg.18, HobsonKlimmek, HobsonNeuberger, beiglbock2021shadow, BayDengNorgilas2}. In particular, if $\mu\leq_c\nu$, then $\Pi^*(\mu,\nu)$ reduces to the set of martingale couplings, while if $\mu\leq_{cd}\nu$ (resp. $\mu\leq_{ci}\nu$), then $\Pi^*(\mu,\nu)$ corresponds to the set of supermartingale (resp. submartingale) couplings of $\mu$ and $\nu$.

The set $\Pi^*(\mu,\nu)$ is convex and compact (see Lemma \ref{lem:Pi*properties}) and, in particular, the problem \eqref{eq:constrainedOTintro} admits an optimizer (see Corollary \ref{cor:cotMinimizer}). While the structure of the optimizers, in general, depends on the cost function, (by considering the quadratic cost) we are able to identify (among all elements of $\Pi^*(\mu,\nu)$) two canonical couplings, in the sense that they produce the smallest and largest correlation, respectively, of random variables $X\sim\mu$ and $Y\sim\nu$ (see Proposition \ref{prop:cotCOV}).

\textit{Section \ref{sec:generalShadow}: Generalized shadow measure.} As our last contribution we provide a systematic way to build elements of $\Pi^*(\mu,\nu)$, i.e., couplings of $(\mu,\nu)$ that optimizes \eqref{eq:mainProblemBary} with $h(\cdot)=\lvert\cdot\lvert$. Our method relies on the notion of the \textit{generalized shadow measure of $\mu$ in $\nu$}, denoted by $S^\nu(\mu)$, which was introduced in the martingale and supermartingale settings in \cite{BeiglbockJuillet:16} and \cite{NutzStebegg.18}, respectively, and further studied in \cite{beiglbock2022potential, beiglbock2021shadow, BayDengNorgilas, BayDengNorgilas2, hobson2019left}.

Consider two measures on the real line, $\mu$ and $\nu$, such that $\mu(\R)\leq \nu(\R)$. Let $\mathcal{T}_{\mu,\nu}=\{\theta:\theta(\R)=\mu(\R),~\theta\leq \nu\}$ be the possible target laws of $\mu$ in $\nu$. In the case we can embed $\mu$ in $\nu$ using a martingale (resp. supermartingale), \cite{BeiglbockJuillet:16} (resp. \cite{NutzStebegg.18}) showed that there exists a canonical choice for the target law in $\mathcal T_{\mu,\nu}$: the shadow measure $S^\nu(\mu)$ is the smallest element of $\mathcal T_{\mu,\nu}$ with respect to convex order $\leq_c$ (resp, convex decreasing order $\leq_{cd}$). In both cases, $S^\mu(\nu)$ can be uniquely identified as a second (distributional) derivative of a certain convex  function. In particular, there exists $s_{\mu,\nu}:\R\to\R$ (that depends on $\mu$ and $\nu$ through the potential functions of these measures) that is convex, and $s''_{\mu,\nu}(dx)=S^\nu(\mu)(dx)$.

From the results of \cite{beiglbock2022potential} and \cite{BayDengNorgilas} we then have an important observation: the same convex $s_{\mu,\nu}$ works (i.e., it identifies the shadow measure) in the martingale and supermartingale settings. In this paper we show that, in fact, the convexity of $s_{\mu,\nu}$ prevails for arbitrary $(\mu,\nu)$ (and not only for those measures that can be joined via martingale or supermartingale), and thus the second derivative of $s_{\mu,\nu}$ still corresponds to a measure - this is precisely our generalized shadow measure $S^\nu(\mu)$ (see Lemma \ref{lem:shadow_potential}).

While Lemma \ref{lem:shadow_potential} identifies $s''_{\mu,\nu}=S^\nu(\mu)$ as an element of $\mathcal{T}_{\mu,\nu}$, in Theorem \ref{thm:shadow_minimalityWOT} we show that $S^\nu(\mu)$ is a canonical element with respect to WOT and convex order. In particular, 
\begin{equation}\label{eq:shadowMin}
S^\nu(\mu)~minimizes\quad \theta\mapsto \inf_{\Pi(\mu,\theta)}\int_\R\left\lvert x-\int_\R y\pi_x(dy)\right\lvert\mu(dx)\quad over\quad \mathcal T_{\mu,\nu}.
\end{equation}
However, there could be many minimizers $\theta^*$ of \eqref{eq:shadowMin}. Nevertheless, they all have the same mean, while the shadow measure has the smallest variance. In other words, $S^\nu(\mu)\leq_c\theta^*$ for any other minimizer $\theta^*$.

Finally, we follow the approaches of \cite{beiglbock2021shadow} and \cite{BayDengNorgilas2}, and, using the shadow measure, construct a large family of couplings within $\Pi^*(\mu,\nu)$, the so-called \textit{shadow couplings}. In particular, let $\hat\mu\in\sP([0,1]\times\R)$ be an element of $\Pi(\Leb_{[0,1]},\mu)$, i.e., some coupling of the Lebesgue measure on $[0,1]$ and $\mu$ (in the language of \cite{beiglbock2021shadow}, $\hat\mu$ is a \textit{lift} of $\mu$). Then each $\hat\mu$ induces a parametrization $(\hat\mu_{[0,u]})_{0\leq u\leq 1}$ of $\mu$, where $\hat\mu_{[0,u]}(dx):=\int^u_0\hat\mu(dv,dx)$: we have that $\hat\mu_{[0,u]}(A)\leq \hat\mu_{[0,v]}(A)\leq\hat\mu_{[0,1]}(A)=\mu(A)$ for all Borel $B\subseteq\R$ and $0\leq u\leq v\leq1$. We then show that (see Theorem \ref{thm:shadowCouplings}), for each lift $\hat\mu$,
\begin{align*}
  &there~exists~the~unique~measure~\hat\pi~ on~ [0,1]\times\R\times\R~ that~couples~ \hat\mu~ and ~\nu,\\
  &and~sends~each~\hat\mu_{[0,u]}~to~the~shadow~measure~S^\nu(\hat\mu_{[0,u]}).
\end{align*}

By integrating out the Lebesgue measure we then have that $\int^1_0d\hat\pi$ corresponds to a coupling of $\mu$ and $\nu$, and which, in particular, belongs to $\Pi^*(\mu,\nu)$. At least in the martingale setting, \cite{beiglbock2021shadow} showed that this type of couplings appear as optimizers to a certain weak optimal transport problems.

\section{Measures and Convex order}
\label{sec:prelims}
For $k\geq 1$, let $\sM^k$ (respectively $\sP^k$) be the set of (Borel) measures (respectively probability measures) on $\R^k$ with finite total mass and finite first moment. When $k=1$, we will use the notation $\sM=\sM^1$ (resp. $\sP=\sP^1$), so that if $\eta\in\sM$ (resp. $\eta\in\sP$), then $\eta(\R)<\infty$ (resp. $\eta(\R)=1$) and $\int_\R\lvert x\lvert\eta(dx)<\infty$. Given a measure $\eta\in\sM$ (not necessarily a probability measure), define $\overline{\eta} = \int_\R x \eta(dx)$ to be the first moment (or mean) of $\eta$ (and then $\overline{\eta}/\eta(\R)$ is the barycentre of $\eta$). Let $\sI_\eta$ be the smallest interval containing the support of $\eta$, and let $\{ \ell_\eta, r_\eta \}$ be the endpoints of $\sI_\eta$ (the support itself is the smallest closed set of full mass). If $\eta$ has an atom at $\ell_\eta$ then $\ell_\eta$ is included in $\sI_\eta$, and otherwise it is excluded, and similarly for $r_\eta$. For any Borel $B\subseteq\mathbb R^k$ and $\eta\in\sM^k$, we write $\eta\lvert_B$ for the restriction of $\eta$ to $B$.

For $\alpha \geq 0$ and $\beta \in \R$ let $\sD(\alpha, \beta)$ denote the set of non-decreasing, convex functions $f:\R \mapsto \R_+$ such that
\[ \lim_{ z \downarrow -\infty}  \{ f(z) \} =  0, \hspace{10mm} \lim_{z \uparrow \infty} \{ f(z) - (\alpha z- \beta) \}   =0. \]
Then, when $\alpha = 0$, $\sD(0,\beta)$ is empty unless $\beta = 0$ and then $\sD(0,0)$
%denote the set of increasing, convex functions $f:\R \mapsto \R_+$ such that
%$\lim_{ z \downarrow -\infty} f(z)=  0\vee\beta = \lim_{z \uparrow \infty} f(z)$. Then
contains one element, the zero function.

For $\eta\in\sM$, define the functions $P_\eta,C_\eta : \R \mapsto \R^+$ by
\begin{equation*}
P_\eta(k) := \int_{\R} (k-x)^+ \eta(dx),\quad k\in\R,
\end{equation*}
and
\begin{equation*}
C_\eta(k) := \int_{\R} (x-k)^+ \eta(dx),\quad k\in\R,
\end{equation*}
respectively. Then $P_\eta(k) \geq 0 \vee  (\eta(\R) k - \overline{\eta} )$ and $C_\eta(k) \geq 0 \vee (\overline{\eta} - \eta(\R)k)$. Also, the Put-Call parity holds: $C_\eta(k) - P_\eta(k) = (\overline{\eta}- \eta(\R)k)$.

The following properties of $P_\eta$ can be found in Chacon~\cite{chacon1977potential}, and Chacon and Walsh~\cite{chacon1976one}: $P_\eta \in \sD(\eta(\R), \overline{\eta})$ and $\{k : P_{\eta}(k) > (\eta(\R)k - \overline{\eta})^+  \} =  \{k : C_{\eta}(k) > (\overline{\eta}- \eta(\R)k)^+\} =(\ell_\eta,r_\eta)$. Conversely (see, for example, Proposition 2.1 in Hirsch et al. \cite{hirsch2012new}),  if $h$ is a non-negative, non-decreasing and convex function with $h \in \sD(k_m,k_f)$ for some numbers $k_m \geq 0$ and $k_f\in\R$ (with $k_f = 0$ if $k_m=0$), then there exists the unique measure $\eta\in\sM$, with total mass $\eta(\R)=k_m$ and first moment $\overline{\eta}=k_f$, such that $h=P_{\eta}$. In particular, $\eta$ is uniquely identified by the second derivative of $h$ in the sense of distributions. Furthermore, $P_\eta$ and $C_\eta$ are related to the potential $U_\eta$, defined by
\begin{equation*}
U_\eta(k) : =   \int_{\R} |k-x| \eta(dx),\quad k\in\R,
\end{equation*}
by $U_\eta=C_\eta+P_\eta$. We will call $P_\eta$ (and $C_\eta$) a modified potential. Finally note that all three second derivatives $C^{\prime\prime}_{\eta},P^{\prime\prime}_{\eta}$ and $U_\eta^{\prime\prime}/2$ identify the same underlying measure  $\eta$.

For $\eta,\chi\in\sM$, let
\begin{equation}\label{eq:sP}
\sC(\eta,\chi):=\{\tilde{P} \in \sD( \eta(\R), m)\ \textrm{for some }m\in\R:P_\chi-\tilde{P}\textrm{ is convex}\}.
\end{equation}

For $\eta,\chi\in\sM$, we write $\eta\leq\chi$ if $\eta(A) \leq \chi(A)$ for all Borel measurable subsets $A$ of $\R$, or equivalently if
\begin{equation*}
\int fd\eta\leq\int fd\chi,\quad \textrm{for all non-negative }f:\R\mapsto\R_+.
\end{equation*}
Since $\eta$ and $\chi$ can be identified as second derivatives of $P_\chi$ and $P_\eta$ respectively, we have $\eta\leq\chi$ if and only if $P_\chi-P_\eta$ is convex, i.e., $P_\eta$ has a smaller curvature than $P_\chi$. Note that there is one-to-one correspondence between measures $\theta\in\sM$ with $\theta(\R)=\eta(\R)$ and $\theta\leq\chi$, and functions $\tilde P\in\sP(\eta,\chi)$.

We further introduce some relevant stochastic orders. For $\eta,\chi\in\sM$ with $\eta(\R)\leq\chi(\R)$, we write $\eta\leq_c\chi,\eta\leq_{cd}\chi,\eta\leq_{ci}\chi,\eta\leq_{pc}\chi,\eta\leq_{pcd}\chi,\eta\leq_{pci}\chi$ if $\int_\R fd\eta\leq\int_\R fd\chi$ for all $f:\R\to\R$ that are convex, convex and non-increasing, convex and non-decreasing, non-negative and convex, non-negative, convex and non-increasing, and non-negative, convex and non-decreasing, respectively. (Note that if $\eta\leq_c\chi$, or $\eta\leq_{cd}\chi$, or $\eta\leq_{ci}\chi$, then automatically $\eta(\R)=\chi(\R)$.) If $\eta(\R)=\chi(\R)$ and $\int_\R fd\eta\leq\int_\R fd\chi$ for all non-decreasing $f:\R\to\R$, then we write $\eta\leq_{sto}\chi$.

For $\eta,\chi\in\sM$ with $\eta(\R)=\chi(\R)$, we write $\pi\in \Pi(\eta,\chi)$ if $\pi\in \sM^2$ and
$$
\pi(A\times\R)=\eta(A)\quad\textrm{and}\quad\pi(\R\times A)=\chi(A),\quad\textrm{for all Borel }A\subseteq\R.
$$
Note that the (scaled) product measure $(\eta\otimes\chi)/\eta(\R)$ is an element of $\Pi(\eta,\chi)$. We will often represent $\pi\in \Pi(\eta,\chi)$ via disintegration with respect to $\eta$: $\pi(dx,dy)=\pi_x(dy)\eta(dx)$, where $(\pi_x)_{x\in\R}$ is a ($\eta$-a.s. unique) family of probability measures satisfying $\pi_x\in\sP$ for $\eta$-a.e. $x\in\R$.

For $\eta,\chi\in\sM$ with $\eta(\R)=\chi(\R)$, we write $\pi\in\Pi_M(\eta,\chi)$ if $\pi\in\Pi(\eta,\chi)$ and $\int_\R y\pi_x(dy)=x$ for $\eta$-a.e. $x\in\R$, i.e., $\Pi_M(\eta,\chi)$ denotes the set of martingale couplings of $\eta$ and $\chi$. Due to Strassen \cite{strassen1965existence} we have that $\Pi_M(\eta,\chi)\neq\emptyset$ if and only if $\eta\leq_c\chi$. Similarly, we write $\pi\in\Pi_{Sup}(\eta,\chi)$ (resp. $\pi\in\Pi_{Sub}(\eta,\chi)$) if $\pi\in\Pi(\eta,\chi)$ and $\int_\R y\pi_x(dy)\leq x$ (resp. $\int_\R y\pi_x(dy)\geq x$) for $\eta$-a.e. $x\in\R$, i.e., $\Pi_{Sup}(\eta,\chi)$ (resp. $\Pi_{Sub}(\eta,\chi)$) denotes the set of supermartingale (resp. submartingale) couplings of $\eta$ and $\chi$. Then $\Pi_{Sup}(\eta,\chi)\neq\emptyset$ (resp. $\Pi_{Sub}(\eta,\chi)\neq\emptyset$) if and only if $\eta\leq_{cd}\chi$ (resp. $\eta\leq_{ci}\chi$).

\section{Weak Optimal Transport}\label{sec:wot}
Fix $\mu,\nu\in\sM$ with $\mu(\R)=\nu(\R)$, and a convex $h:\R\to\R_+$ with $h(0)=0$. The goal of the Weak Optimal Transport (WOT) theory is to determine the value
  \begin{equation}\label{eq:WOT_l1} 
   V_h(\mu,\nu):=\inf_{\pi\in\Pi(\mu,\nu)}\int_\R\left\lvert x-\int_\R y\pi_x(dy)\right\lvert \mu(dx)= \inf_{\pi\in\Pi(\mu,\nu)}\int_\R h\left( x-\overline{\pi_x}\right) \mu(dx),
  \end{equation}
  and the associated optimal couplings.

  \begin{remark} The assumption that $h$ is non-negative with $h(0)=0$ is not restrictive. Indeed, if $\tilde h:\R\to\R$ is convex, then, for any $a\in [\tilde h'(0-),\tilde h'(0+)]$, $x\mapsto \hat h(x)=[\tilde h(x)-\tilde h(0)-ax]$ is convex, $\hat h(0)=0\in[\hat h'(0-),\hat h'(0+)]$, and therefore $\hat h\geq 0$ on $\R$. In particular, $ V_{\hat h}(\mu,\nu)=V_{\tilde h}(\mu,\nu)-\tilde h(0)-a(\overline\mu-\overline\nu)$, and both problems share the same optimizers.
  \end{remark}
\subsection{WOT with $L^1$ costs}
In this section we consider a cost function $k\mapsto h(k)=\lvert k\lvert$, and thus in what follows write $V:=V_h$, where $V_h$ is defined in \eqref{eq:WOT_l1}.

From the previous works we already know that there exists an optimizer $\pi^*\in\Pi(\mu,\nu)$, so that
\begin{equation}\label{eq:WOT_l1+}
V(\mu,\nu)=\int_\R\left\lvert x-\int_\R y\pi^*_x(dy)\right\lvert \mu(dx),
\end{equation}
however, it is not unique. Hence, the goal is to compute the value \eqref{eq:WOT_l1+} and explicitly construct (at least one)
$$
\pi^*\in{\arg\min}_{\pi\in\Pi(\mu,\nu)}\int_\R\left\lvert x-\int_\R y\pi_x(dy)\right\lvert \mu(dx).
$$

First, observe that by Jensen's inequality we have that
\begin{equation}\label{eq:WOT_lb}
    \int_\R\left\lvert x-\int_\R y\pi_x(dy)\right\lvert \mu(dx)\geq \left\lvert \int_\R\left( x-\int_\R y\pi_x(dy)\right) \mu(dx) \right\lvert=\left\lvert \overline\mu-\overline\nu\right\lvert,\quad \pi\in\Pi(\mu,\nu).
\end{equation}
The natural question is then, for which $\mu,\nu\in\sM$ does there exist $\pi^*\in\Pi(\mu,\nu)$ such that
$$
\int_\R\left\lvert x-\int_\R y\pi^*_x(dy)\right\lvert \mu(dx)=\lvert \overline\mu-\overline\nu\lvert.
$$
Then any such coupling is automatically optimal for \eqref{eq:WOT_l1+}.

Suppose $\mu\leq_c\nu$. Then $\overline\mu=\overline\nu$. On the other hand, we also have that $\Pi_M(\mu,\nu)\neq\emptyset$, and thus there exists $\tilde\pi\in\Pi(\mu,\nu)$ such that $\int_\R y\tilde\pi_x(dy)=x$ for $\mu$-a.e. $x\in\R$. It follows that
$$
V(\mu,\nu)=\inf_{\pi\in\Pi_M(\mu,\nu)}\int_\R\left\lvert x-\int_\R y\tilde\pi_x(dy)\right\lvert \mu(dx)=0=\lvert \overline\mu-\overline\nu\lvert.
$$

As the next lemma shows, we can obtain a similar result in the cases when either $\mu\leq_{cd}\nu$ or $\mu\leq_{ci}\nu$. (Note that if $\mu\leq_c\nu$ then both $\mu\leq_{cd}\nu$ and $\mu\leq_{ci}\nu$.)

\begin{lemma}\label{lem:WOTsuper}
Suppose $\mu,\nu\in\sM$ with $\mu\leq_{cd}\nu$ (resp. $\mu\leq_{ci}\nu$). Then for any $\tilde\pi\in\Pi_{Sup}(\mu,\nu)$ (resp. $\hat\pi\in\Pi_{Sub}(\mu,\nu)$) we have that
\begin{align*}
V(\mu,\nu)&=\int_\R\left\lvert x-\int_\R y\tilde\pi_x(dy)\right\lvert \mu(dx)=\lvert \overline\mu-\overline\nu\lvert=(\overline\mu-\overline\nu)\\
\Big(resp.~V(\mu,\nu)&=\int_\R\left\lvert x-\int_\R y\hat\pi_x(dy)\right\lvert \mu(dx)=\lvert \overline\mu-\overline\nu\lvert=(\overline\nu-\overline\mu)\Big).
\end{align*}
\end{lemma}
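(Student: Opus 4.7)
The plan is to verify this by matching an upper bound (from the specific super/submartingale coupling) with the generic lower bound \eqref{eq:WOT_lb}. The key observation is that the supermartingale property lets us drop the absolute value in the integrand, reducing the computation to a difference of means.

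Concretely, first I would record the sign of $\overline\mu-\overline\nu$ in each case. If $\mu\leq_{cd}\nu$, applying the definition to the convex non-increasing function $f(x)=-x$ yields $-\overline\mu\leq -\overline\nu$, i.e., $\overline\mu\geq\overline\nu$, so $|\overline\mu-\overline\nu|=\overline\mu-\overline\nu$. Symmetrically, if $\mu\leq_{ci}\nu$, the convex non-decreasing $f(x)=x$ gives $\overline\mu\leq\overline\nu$, so $|\overline\mu-\overline\nu|=\overline\nu-\overline\mu$. Existence of the claimed super/submartingale couplings is immediate from Strassen's theorem (as recalled in Section \ref{sec:prelims}): $\mu\leq_{cd}\nu$ guarantees $\Pi_{Sup}(\mu,\nu)\neq\emptyset$, and $\mu\leq_{ci}\nu$ guarantees $\Pi_{Sub}(\mu,\nu)\neq\emptyset$.

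Next, for any $\tilde\pi\in\Pi_{Sup}(\mu,\nu)$, the defining property $\int_\R y\tilde\pi_x(dy)\leq x$ for $\mu$-a.e. $x$ implies $x-\int_\R y\tilde\pi_x(dy)\geq 0$, and hence
\begin{equation*}
\int_\R\Bigl|x-\int_\R y\tilde\pi_x(dy)\Bigr|\mu(dx)=\int_\R\Bigl(x-\int_\R y\tilde\pi_x(dy)\Bigr)\mu(dx)=\overline\mu-\int_{\R\times\R}y\,\tilde\pi(dx,dy)=\overline\mu-\overline\nu,
\end{equation*}
where the last equality uses that $\tilde\pi\in\Pi(\mu,\nu)$ so the second marginal of $\tilde\pi$ is $\nu$. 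Combining with the first step, the right-hand side equals $|\overline\mu-\overline\nu|$. Since the lower bound \eqref{eq:WOT_lb} gives $V(\mu,\nu)\geq|\overline\mu-\overline\nu|$, and here $\tilde\pi$ achieves this value, we conclude $V(\mu,\nu)=|\overline\mu-\overline\nu|=\overline\mu-\overline\nu$ and that $\tilde\pi$ is a minimizer.

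The submartingale case is fully analogous: for $\hat\pi\in\Pi_{Sub}(\mu,\nu)$ we have $x-\int_\R y\hat\pi_x(dy)\leq 0$, so the absolute value flips sign and the same integration against $\mu$ yields $\overline\nu-\overline\mu=|\overline\mu-\overline\nu|$, again matching the Jensen lower bound. There is no serious obstacle: the lemma really just says that once a super/submartingale coupling exists, the $L^1$ WOT cost collapses to the (signed) difference of first moments, and Jensen certifies optimality of \emph{any} such coupling.
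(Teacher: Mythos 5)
Your proposal is correct and follows essentially the same route as the paper's proof: drop the absolute value using the supermartingale sign condition, integrate to obtain $\overline\mu-\overline\nu$, and invoke the Jensen lower bound \eqref{eq:WOT_lb} to certify optimality of any $\tilde\pi\in\Pi_{Sup}(\mu,\nu)$, with the submartingale case by symmetry.
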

\begin{proof}
    We only prove the case $\mu\leq_{cd}\nu$ (the case $\mu\leq_{ci}\nu$ follows by symmetry). If $\mu\leq_{cd}\nu$ then $\overline\nu\leq \overline\mu$ and $\Pi_{Sup}(\mu,\nu)\neq\emptyset$. Since, for any $\tilde\pi\in\Pi_{Sup}(\mu,\nu)\subseteq\Pi(\mu,\nu)$, we have that $\int_\R y\tilde\pi_x(dy)\leq x$ for $\mu$-a.e. $x$, it follows that
    $$
\int_\R\left\lvert x-\int_\R y\tilde\pi_x(dy)\right\lvert \mu(dx)=\int_\R\left( x-\int_\R y\tilde\pi_x(dy)\right) \mu(dx)=(\overline\mu-\overline\nu)=\lvert \overline\mu-\overline\nu\lvert.
    $$
    This, combined with \eqref{eq:WOT_lb}, proves the claim.
\end{proof}

In order to deal with the general case we introduce two constants:
\begin{equation}\label{eq:p_c}
p_{\mu,\nu}:=\sup_{k\in\R}\left\{P_\mu(k)-P_\nu(k)\right\}\quad\textrm{and}\quad c_{\mu,\nu}:=\sup_{k\in\R}\left\{C_\mu(k)-C_\nu(k)\right\}.
\end{equation}
Using that $\mu(\R)=\nu(\R)$ together with the Put-Call parity we then have that $$p_{\mu,\nu}=c_{\mu,\nu}+(\overline\nu-\overline\mu).$$

Now, note that $p_{\mu,\nu},c_{\mu,\nu}\in\R_+$. This can be easily seen by observing that $k\mapsto\left\{P_\mu(k)-P_\nu(k)\right\}$ (resp. $k\mapsto\left\{C_\mu(k)-C_\nu(k)\right\}$) is continuous, and converges to $0$ (resp. $(\overline\mu-\overline\nu)$) when $k\to-\infty$ and to $(\overline\nu-\overline\mu)$ (resp. 0) when $k\to\infty$.

Further observe that,
\begin{align*}
    p_{\mu,\nu}&=\inf\left\{m\geq0:P_\nu(k)+m\geq P_\mu(k),~k\in\R\right\},\\
    c_{\mu,\nu}&=\inf\left\{m\geq0:C_\nu(k)+m\geq C_\mu(k),~k\in\R\right\}.
\end{align*}
Indeed, if $p\in\R_+$ (resp. $c\in\R_+$) is another constant such that $P_\nu(k)+p\geq P_\mu(k)$ (resp. $C_\nu(k)+c\geq C_\mu(k)$) for all $k\in\R$, then $p\geq P_\mu(k)-P_\nu(k)$ (resp. $c\geq C_\mu(k)-C_\nu(k)$) for all $k\in\R$, and thus $p_{\mu,\nu}\leq p$ (resp. $c_{\mu,\nu}\leq c$). 

Moreover, if $p_{\mu,\nu}>0$ (resp. $c_{\mu,\nu}>0$), then there exists $k_p\in\R$ (resp. $k_c\in\R$) such that $P_\nu(k_p)+p_{\mu,\nu}=P_\mu(k_p)$ (resp. $C_\nu(c_p)+c_{\mu,\nu}=C_\mu(c_p)$); see Bayraktar et al. [Lemma  B.6 and its proof]\cite{BayDengNorgilas}.

On the other hand, if $p_{\mu,\nu}=0$ (resp. $c_{\mu,\nu}=0$), then $P_\nu\geq P_\mu$ (resp. $C_\nu\geq C_\mu$) on $\R$, which is equivalent to $\mu\leq _{cd}\nu$ (resp. $\mu\leq_{ci}\nu$), and thus Lemma \ref{lem:WOTsuper} applies. Hence, without loss of generality, in what follows we can assume that $p_{\mu,\nu}>0$ and $c_{\mu,\nu}>0$.

Consider $k\mapsto \tilde {P}_{\mu,\nu}(k)$ defined by
\begin{equation}\label{eq:tildeP_munu}
\tilde {P}_{\mu,\nu}(k):=P_\nu(k)+p_{\mu,\nu},\quad k\in\R.
\end{equation}
Note that $\tilde P_{\mu,\nu}\geq P_\mu$ on $\R$, and (since we assumed that $p_{\mu,\nu}>0$) $$\left\{k\in\R: \tilde{P}_{\mu,\nu}(k)=P_\mu(k)\right\}\neq\emptyset.$$

 Let
 \begin{equation}\label{eq:x-x+}
x^-_{\mu,\nu}:=\inf\left\{k\in\R: \tilde{P}(k)=P_\mu(k)\right\},\quad x^+_{\mu,\nu}:=\sup\left\{k\in\R: \tilde{P}(k)=P_\mu(k)\right\}.
 \end{equation}
 Now note that $x^-_{\mu,\nu},x^+_{\mu,\nu}\in\R$. Indeed, $\lim_{k\to-\infty}\left\{\tilde P_{\mu,\nu} (k)-P_\mu(k)\right\}=p_{\mu,\nu}>0$ and thus $-\infty<x^-_{\mu,\nu}\leq x^+_{\mu,\nu}$. Hence we only need to argue that $x^+_{\mu,\nu}<\infty$. This follows by observing that (by the Put-Call parity and the assumption that $c_{\mu,\nu}>0$)
 \begin{align*}
\lim_{k\to\infty}\left\{ \tilde{P}_{\mu,\nu}(k)-P_\mu(k)\right\}&=\lim_{k\to\infty}\left\{ {P}_\nu(k)-P_\mu(k)\right\}+p_{\mu,\nu}\\&=
\lim_{k\to\infty}\left\{ {C}_\nu(k)-C_\mu(k)\right\}-(\overline\nu-\overline\mu)+p_{\mu,\nu}=c_{\mu,\nu}>0.
 \end{align*}
 Furthermore, by the continuity of $P_\nu$ and $P_\mu$, we have that $\tilde P(k)=P_\mu(k)$ for $k\in\{x^-_{\mu,\nu},x^+_{\mu,\nu}\}$.

 We now split $\mu$ into three sub-probability measures $\eta^-_{\mu,\nu},\eta^0_{\mu,\nu},\eta^+_{\mu,\nu}\in\sM$ such that $\mu=\eta^-_{\mu,\nu}+\eta^0_{\mu,\nu}+\eta^+_{\mu,\nu}$:
 \begin{equation}\label{eq:mu_decomposition}
\eta^-_{\mu,\nu}:=\mu\lvert_{(-\infty,x^-_{\mu,\nu})},\quad\eta^0_{\mu,\nu}:=\mu\lvert_{[x^-_{\mu,\nu},x^+_{\mu,\nu}]},\quad\eta^+_{\mu,\nu}:=\mu\lvert_{(x^+_{\mu,\nu},\infty)}.
 \end{equation}
  Define
 \begin{equation}\label{eq:delta_slope}
\Delta^-_{\mu,\nu}:= P'_\mu(x^-_{\mu,\nu}-),\quad \Delta^+_{\mu,\nu}:= P'_\mu(x^+_{\mu,\nu}+),
 \end{equation}
 where $f'(k-)$ (resp. $f'(k+)$) denotes the left (resp. right) derivative, at $k\in\R$, of a convex function $x\mapsto f(x)$. Note that $\Delta^{\pm}_{\mu,\nu}\in\partial\tilde P_{\mu,\nu}(x^\pm)$, where $\partial f (x)$ denotes the sub-differential (provided it exists) at $x\in\R$, of $k\mapsto f(k)$. Then, by construction, $$\eta^-_{\mu,\nu}(\R)=\Delta^-_{\mu,\nu},\quad\eta^+_{\mu,\nu}(\R)=1-\Delta^+_{\mu,\nu},\quad\eta^0(\R)=\Delta^+_{\mu,\nu}-\Delta^-_{\mu,\nu}.$$

Now the goal is to also split $\nu$ into $\chi^-_{\mu,\nu},\chi^0_{\mu,\nu},\chi^+_{\mu,\nu}\in\sM$ with $\nu=\chi^-_{\mu,\nu}+\chi^0_{\mu,\nu}+\chi^+_{\mu,\nu}$, but with an additional requirement that $\eta^-_{\mu,\nu}\leq_{ci}\chi^-_{\mu,\nu}$, $\eta^0_{\mu,\nu}\leq_c\chi^0_{\mu,\nu}$ and $\eta^+_{\mu,\nu}\leq_{cd}\chi^+_{\mu,\nu}$. Set
\begin{align}
    \label{eq:nu_decompisition}
    \chi^-_{\mu,\nu}&:=\nu\lvert_{(-\infty,x^-_{\mu,\nu})}+(\Delta^-_{\mu,\nu}-\tilde{P}_{\mu,\nu}'(x^-_{\mu,\nu}-))\delta_{x^-_{\mu,\nu}},\nonumber\\
    \chi^+_{\mu,\nu}&:=\nu\lvert_{(x^+_{\mu,\nu},\infty)}+(\Delta^+_{\mu,\nu}-\tilde P_{\mu,\nu}'(x^+_{\mu,\nu}+))\delta_{x^+_{\mu,\nu}},\\
    \chi^0_{\mu,\nu}&:=\nu-\chi^-_{\mu,\nu}-\chi^+_{\mu,\nu}.\nonumber
\end{align}
\begin{lemma}\label{lem:mu_nu_decomposition}
Let $\eta^-_{\mu,\nu},~\eta^+_{\mu,\nu},~\eta^0_{\mu,\nu},~\chi^-_{\mu,\nu},~\chi^+_{\mu,\nu},~\chi^0_{\mu,\nu}\in\sM$ be as in \eqref{eq:mu_decomposition} and \eqref{eq:nu_decompisition}. Then
 $$
 \eta^-_{\mu,\nu}\leq_{ci}\chi^-_{\mu,\nu},\quad\eta^0_{\mu,\nu}\leq_c\chi^0_{\mu,\nu},\quad \eta^+_{\mu,\nu}\leq_{cd}\chi^+_{\mu,\nu}.
 $$
 \end{lemma}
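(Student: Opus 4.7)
The plan is to translate each of the three claimed orderings into a pointwise comparison of (modified) potential functions and reduce all of them to the single inequality $\tilde P_{\mu,\nu} \geq P_\mu$ on $\R$, together with the equalities at $x^{\pm}_{\mu,\nu}$. Recall from Section~\ref{sec:prelims} that, for equal-mass measures $\eta,\chi\in\sM$, one has $\eta\leq_{cd}\chi$ if and only if $P_\eta\leq P_\chi$, $\eta\leq_{ci}\chi$ if and only if $C_\eta\leq C_\chi$, and $\eta\leq_c\chi$ if and only if additionally $\overline\eta=\overline\chi$. The first step is mass bookkeeping: using $P'_\eta(k-)=\eta((-\infty,k))$, $P'_\eta(k+)=\eta((-\infty,k])$, together with $\Delta^{\pm}_{\mu,\nu}=P'_\mu(x^{\pm}_{\mu,\nu}\pm)$ and $\tilde P'_{\mu,\nu}=P'_\nu$, one verifies that the atom masses at $x^{\pm}_{\mu,\nu}$ in $\chi^{\pm}_{\mu,\nu}$ (which are non-negative because $x^{\pm}_{\mu,\nu}$ are the extreme equality points between $\tilde P_{\mu,\nu}$ and $P_\mu$) are calibrated so that $\chi^-_{\mu,\nu}(\R)=\Delta^-_{\mu,\nu}=\eta^-_{\mu,\nu}(\R)$, $\chi^+_{\mu,\nu}(\R)=\mu(\R)-\Delta^+_{\mu,\nu}=\eta^+_{\mu,\nu}(\R)$, and hence $\chi^0_{\mu,\nu}(\R)=\Delta^+_{\mu,\nu}-\Delta^-_{\mu,\nu}=\eta^0_{\mu,\nu}(\R)$.

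Next I would compute the potentials piece by piece. Beyond $x^-_{\mu,\nu}$, both $P_{\eta^-_{\mu,\nu}}$ and $P_{\chi^-_{\mu,\nu}}$ become affine with common slope $\Delta^-_{\mu,\nu}$, the atom in $\chi^-_{\mu,\nu}$ supplying exactly the slope increment needed. On $(-\infty,x^-_{\mu,\nu})$, the difference $P_{\chi^-_{\mu,\nu}}-P_{\eta^-_{\mu,\nu}}$ equals $P_\nu-P_\mu$, and the defining equality $\tilde P_{\mu,\nu}(x^-_{\mu,\nu})=P_\mu(x^-_{\mu,\nu})$ pins down the mean shift $\overline{\chi^-_{\mu,\nu}}-\overline{\eta^-_{\mu,\nu}}=p_{\mu,\nu}$. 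Put--Call parity then converts this into $C_{\chi^-_{\mu,\nu}}-C_{\eta^-_{\mu,\nu}}=C_\nu-C_\mu+c_{\mu,\nu}\geq 0$ by the very definition of $c_{\mu,\nu}$, which yields $\eta^-_{\mu,\nu}\leq_{ci}\chi^-_{\mu,\nu}$. The mirror argument in the right tail, carried out directly with $P$-potentials, gives $P_{\chi^+_{\mu,\nu}}-P_{\eta^+_{\mu,\nu}}=\tilde P_{\mu,\nu}-P_\mu\geq 0$ on $[x^+_{\mu,\nu},\infty)$, whence $\eta^+_{\mu,\nu}\leq_{cd}\chi^+_{\mu,\nu}$. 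For the central piece, writing $P_{\eta^0_{\mu,\nu}}=P_\mu-P_{\eta^-_{\mu,\nu}}-P_{\eta^+_{\mu,\nu}}$ and $P_{\chi^0_{\mu,\nu}}=P_\nu-P_{\chi^-_{\mu,\nu}}-P_{\chi^+_{\mu,\nu}}$, the tail contributions have identical affine parts on $[x^-_{\mu,\nu},x^+_{\mu,\nu}]$, so the difference on that interval equals $P_\nu-P_\mu+p_{\mu,\nu}=\tilde P_{\mu,\nu}-P_\mu\geq 0$; the boundary equalities at $x^{\pm}_{\mu,\nu}$ force the affine extensions to coincide and deliver $\overline{\eta^0_{\mu,\nu}}=\overline{\chi^0_{\mu,\nu}}$, giving $\eta^0_{\mu,\nu}\leq_c\chi^0_{\mu,\nu}$.

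The main technical obstacle is the boundary bookkeeping at $x^{\pm}_{\mu,\nu}$: one has to check that the atom masses in $\chi^{\pm}_{\mu,\nu}$ are non-negative and supply exactly the slope correction that makes the piecewise-defined potentials glue continuously to their affine extensions of slopes $\Delta^{\pm}_{\mu,\nu}$, and that the mean-shift identities required above follow from the boundary equalities $\tilde P_{\mu,\nu}(x^{\pm}_{\mu,\nu})=P_\mu(x^{\pm}_{\mu,\nu})$. Once this calibration is verified, the three orderings drop out immediately from the pointwise bound $\tilde P_{\mu,\nu}\geq P_\mu$, so the proof is essentially a careful organization of potential-function calculus.
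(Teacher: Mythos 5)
Your proposal is correct and follows essentially the same route as the paper: piecewise computation of potential functions, the key inequality $\tilde P_{\mu,\nu}\geq P_\mu$, and the boundary equalities $\tilde P_{\mu,\nu}(x^{\pm}_{\mu,\nu})=P_\mu(x^{\pm}_{\mu,\nu})$. The only cosmetic difference is that for the left-tail piece you pass from $P$-potentials to $C$-potentials via Put--Call parity and the identity $p_{\mu,\nu}=c_{\mu,\nu}+(\overline\nu-\overline\mu)$, whereas the paper computes $C_{\eta^-}$ and $C_{\chi^-}$ directly; these are the same calculation organized slightly differently.
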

 \begin{proof}
 In order to ease the notation we suppress the dependence  on $(\mu,\nu)$ and write $\eta^-=\eta^-_{\mu,\nu}$, and similarly for other quantities.
 
 First consider $\eta^-$ and $\chi^-$. By construction, $\chi^-(\R)=\eta^-(\R)$, and, since $\tilde P\geq P_\mu$ on $(-\infty,x^-]$ and $\tilde P(x^-)= P_\mu(x^-)$, we have that
 \begin{align*}
     C_{\eta^-}(k)&=\left(P_\mu(k)-\left\{P_\mu(x^-)+\Delta^-(k-x^-)\right\}\right)I_{\{k\leq x^-\}}\\
     &\leq \left(\tilde P(k)-\left\{P_\mu(x^-)+\Delta^-(k-x^-)\right\}\right)I_{\{k\leq x^-\}}\\
     &= \left(\tilde P(k)-\left\{\tilde P(x^-)+\Delta^-(k-x^-)\right\}\right)I_{\{k\leq x^-\}}=C_{\chi^-}(k),\quad k\in\R.
 \end{align*}It follows that $\eta^-\leq_{ci}\chi^-$.

By symmetry, $\eta^+\leq_{cd}\chi^+$. Indeed, by construction we have that $\chi^+(\R)=\eta^+(\R)$, and, since $\tilde P\geq P_\mu$ on $[x^+,\infty)$ and $\tilde P(x^+)= P_\mu(x^+)$, we have that 
 \begin{align*}
     P_{\eta^+}(k)&=\left(P_\mu(k)-\left\{P_\mu(x^+)+\Delta^+(k-x^+)\right\}\right)I_{\{k\geq x^+\}}\\
     &\leq \left(\tilde P(k)-\left\{P_\mu(x^+)+\Delta^+(k-x^+)\right\}\right)I_{\{k\geq x^+\}}\\
     &= \left(\tilde P(k)-\left\{\tilde P(x^+)+\Delta^+(k-x^+)\right\}\right)I_{\{k\geq x^+\}}=P_{\chi^+}(k),\quad k\in\R.
 \end{align*}

 Finally, consider $\eta^0$ and $\chi^0$. It is clear that $\chi^0(\R)=\eta^0(\R)$, and thus we are left to argue that $\eta^0\leq_c\chi^0$. For this it is enough to show that $P_{\eta^0}\leq P_{\chi^0}$ on $\R$, and $\lim_{k\to\infty}P_{\eta^0}(k)=\lim_{k\to\infty}P_{\chi^0}(k)$. We have that
 \begin{align*}
     P_{\eta^0}(k)&=\left(P_\mu(k) -\left\{P_\mu(x^-)+\Delta^-(k-x^-)\right\}\right)I_{k\in[x^-,x^+]} +\left(P_\mu(x^+)+\Delta^+(k-x^+)\right)I_{\{k>x^+\}}\\
     &\leq 
     \left(\tilde P(k) -\left\{P_\mu(x^-)+\Delta^-(k-x^-)\right\}\right)I_{k\in[x^-,x^+]} +\left(P_\mu(x^+)+\Delta^+(k-x^+)\right)I_{\{k>x^+\}}\\
     &=
     \left(\tilde P(k) -\left\{\tilde P(x^-)+\Delta^-(k-x^-)\right\}\right)I_{k\in[x^-,x^+]} +\left(\tilde P(x^+)+\Delta^+(k-x^+)\right)I_{\{k>x^+\}}\\
     &=P_{\chi^0}(k),\quad k\in\R,
 \end{align*}
 from which the desired asymptotic behaviour at $\infty$ also follows. We conclude that $\mu^0\leq_c\nu^0$. 
\end{proof}
We now define a set of candidate optimal couplings $\Pi^*(\mu,\nu)\subseteq\Pi(\mu,\nu)$:
\begin{equation}\label{eq:optimal_pi}
\begin{aligned}
\pi^*\in\Pi^*(\mu,\nu)\quad\textrm{if}\quad\pi^*\in\Pi(\mu,\nu)\quad\textrm{and}\quad\pi^*_{\mu,\nu}:=\pi^-_{\mu,\nu}+\pi^0_{\mu,\nu}+\pi^+_{\mu,\nu},\quad\textrm{where}
\\
\pi^-_{\mu,\nu}\in\Pi_{Sub}(\eta^-_{\mu,\nu},\chi^-_{\mu,\nu}),\quad\pi^0_{\mu,\nu}\in\Pi_{M}(\eta^0_{\mu,\nu},\chi^0_{\mu,\nu}),\quad\pi^+_{\mu,\nu}\in\Pi_{Sup}(\eta^+_{\mu,\nu},\chi^+_{\mu,\nu}).
\end{aligned}
\end{equation}
Note that, by construction and Lemma \ref{lem:mu_nu_decomposition}, $\Pi^*(\mu,\nu)$ is non-empty.

 We are now ready to state and prove the main result of this section.
 
 \begin{theorem}\label{thm:coupling} Suppose $\mu,\nu\in\sM$ with $\mu(\R)=\nu(\R)$ and $\min\{p_{\mu,\nu},c_{\mu,\nu}\}>0$. Then
 
 \begin{align*}
V(\mu,\nu)=\int_\R\left\lvert x-\int_\R y\pi^*_x(dy)\right\lvert\mu(dx)&=(\overline{\chi^-_{\mu,\nu}}-\overline{\eta^-_{\mu,\nu}})+(\overline{\eta^+_{\mu,\nu}}-\overline{\chi^+_{\mu,\nu}})\\&=V(\eta^-_{\mu,\nu},\chi^-_{\mu,\nu})+ V(\eta^+_{\mu,\nu},\chi^+_{\mu,\nu}),\quad\textrm{for all }\pi^*\in\Pi^*({\mu,\nu}).
 \end{align*}
Furthermore, any optimizer for $V(\mu,\nu)$ is of the form \eqref{eq:optimal_pi}.
 \end{theorem}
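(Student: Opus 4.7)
The plan is to prove $V(\mu,\nu) = p_{\mu,\nu} + c_{\mu,\nu}$ from both sides and then identify the form of optimizers from the equality cases.

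For the \emph{upper bound}, take any $\pi^* = \pi^- + \pi^0 + \pi^+ \in \Pi^*(\mu,\nu)$. The martingale piece $\pi^0$ contributes zero to the objective. Lemma~\ref{lem:mu_nu_decomposition} supplies the orders $\eta^-_{\mu,\nu} \leq_{ci} \chi^-_{\mu,\nu}$ and $\eta^+_{\mu,\nu} \leq_{cd} \chi^+_{\mu,\nu}$, so Lemma~\ref{lem:WOTsuper} identifies the contributions of $\pi^-$ and $\pi^+$ as $\overline{\chi^-_{\mu,\nu}} - \overline{\eta^-_{\mu,\nu}}$ and $\overline{\eta^+_{\mu,\nu}} - \overline{\chi^+_{\mu,\nu}}$, respectively. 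A direct computation from \eqref{eq:mu_decomposition}--\eqref{eq:nu_decompisition} and the touching identity $\tilde P_{\mu,\nu}(x^\pm_{\mu,\nu}) = P_\mu(x^\pm_{\mu,\nu})$ evaluates these as $P_\mu(x^-_{\mu,\nu}) - P_\nu(x^-_{\mu,\nu}) = p_{\mu,\nu}$ and, by Put-Call parity, $C_\mu(x^+_{\mu,\nu}) - C_\nu(x^+_{\mu,\nu}) = c_{\mu,\nu}$, giving $V(\mu,\nu) \leq p_{\mu,\nu}+c_{\mu,\nu}$.

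For the \emph{lower bound}, apply the equivalence \eqref{eq:equivalence} from \cite{gozlan2017kantorovich} to write $V(\mu,\nu) = \inf_{\mu^* \leq_c \nu} W_1(\mu, \mu^*)$. For any such $\mu^*$, set $g := P_\mu - P_{\mu^*}$; then $W_1(\mu,\mu^*)$ equals the total variation of $g$ on $\R$, with boundary values $g(-\infty) = 0$ and $g(+\infty) = \overline{\mu^*} - \overline\mu = p_{\mu,\nu} - c_{\mu,\nu}$. The inequality $P_{\mu^*} \leq P_\nu$ (from $\mu^* \leq_c \nu$) gives $g(x^\pm_{\mu,\nu}) \geq P_\mu(x^\pm_{\mu,\nu}) - P_\nu(x^\pm_{\mu,\nu}) = p_{\mu,\nu}$. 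Bounding the total variation on $(-\infty, x^-_{\mu,\nu}]$ and $[x^+_{\mu,\nu}, \infty)$ from below by the corresponding endpoint differences yields
\[ TV(g;\R) \;\geq\; g(x^-_{\mu,\nu}) + \bigl(g(x^+_{\mu,\nu}) - g(+\infty)\bigr) \;\geq\; p_{\mu,\nu} + c_{\mu,\nu}, \]
so $V(\mu,\nu) \geq p_{\mu,\nu}+c_{\mu,\nu}$.

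For any optimizer $\pi$, set $T(x) := \overline{\pi_x}$ and $\mu^* := T_*\mu$. Then $\mu^* \leq_c \nu$ and $W_1(\mu,\mu^*) = p_{\mu,\nu}+c_{\mu,\nu}$, so every inequality above becomes equality. Tightness forces $g$ to be non-decreasing on $(-\infty, x^-_{\mu,\nu}]$ from $0$ to $p_{\mu,\nu}$, constant equal to $p_{\mu,\nu}$ on $[x^-_{\mu,\nu}, x^+_{\mu,\nu}]$, and non-increasing on $[x^+_{\mu,\nu}, +\infty)$ from $p_{\mu,\nu}$ to $p_{\mu,\nu}-c_{\mu,\nu}$. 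The constant middle implies $P_{\mu^*}''=P_\mu''$ on $(x^-_{\mu,\nu}, x^+_{\mu,\nu})$, so $\mu^*|_{(x^-_{\mu,\nu},x^+_{\mu,\nu})}=\eta^0_{\mu,\nu}$; this forces $T = \mathrm{id}$ on the middle interval $\mu$-a.s., making $\pi|_{[x^-_{\mu,\nu},x^+_{\mu,\nu}]\times\R}$ a martingale coupling. The outer monotonicities give $T\geq\mathrm{id}$ on $(-\infty,x^-_{\mu,\nu})$ and $T\leq\mathrm{id}$ on $(x^+_{\mu,\nu},\infty)$, so $\pi^\pm$ are sub/supermartingale couplings. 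The main obstacle is the final identification of the outer y-marginals as $\chi^-_{\mu,\nu}$ and $\chi^+_{\mu,\nu}$: the TV tightness directly pins down only $\mu^*$ on the middle interval, and $\mu^*$ (a barycentric push-forward) in general differs from the y-marginals of $\pi^\pm$. Bridging this gap requires combining the constraint $\pi^-_y + \pi^0_y + \pi^+_y = \nu$, the fixed means of the three pieces forced by $T$, and the orderings $\eta^-_{\mu,\nu}\leq_{ci}\pi^-_y$, $\eta^0_{\mu,\nu}\leq_c\pi^0_y$, $\eta^+_{\mu,\nu}\leq_{cd}\pi^+_y$, with a shadow-measure-type minimality argument (developed in Section~\ref{sec:generalShadow}) identifying $\chi^\pm_{\mu,\nu}$ as the unique $\leq_c$-minimal admissible y-marginals; once this is established, $\pi^0_y = \nu - \chi^-_{\mu,\nu} - \chi^+_{\mu,\nu} = \chi^0_{\mu,\nu}$ follows automatically.
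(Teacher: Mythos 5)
Your value computation is correct, and the total-variation lower bound is a genuinely different and appealing route: it delivers the crisp formula $V(\mu,\nu)=p_{\mu,\nu}+c_{\mu,\nu}$ directly. The paper's proof avoids the equivalence \eqref{eq:equivalence} entirely; it is fully self-contained and elementary. For the lower bound it fixes an arbitrary $\tilde\pi\in\Pi(\mu,\nu)$, drops the contribution over $\eta^0_{\mu,\nu}$, applies Jensen on the $\eta^\pm_{\mu,\nu}$ pieces, and then controls the means of the induced second marginals $\tilde\chi^\pm$ by stochastic dominance: since $\chi^-_{\mu,\nu}$ is the leftmost sub-measure of $\nu$ of its mass, $\chi^-_{\mu,\nu}\leq_{sto}\tilde\chi^-$, and combining with $\eta^-_{\mu,\nu}\leq_{ci}\chi^-_{\mu,\nu}$ yields $\overline{\eta^-_{\mu,\nu}}\leq\overline{\chi^-_{\mu,\nu}}\leq\overline{\tilde\chi^-}$ (symmetrically on the right). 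Your route requires the nontrivial imported theorem $V(\mu,\nu)=\inf_{\mu^*\leq_c\nu}W_1(\mu,\mu^*)$ and, further down the line, the uniqueness and monotonicity of the optimal map $T^*$ to pass from ``$\mu^*|_{(x^-,x^+)}=\mu|_{(x^-,x^+)}$'' to ``$T=\mathrm{id}$ on the middle $\mu$-a.s.'' — a barycentric pushforward identity alone does not force pointwise equality of the map.

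The gap you flag in the uniqueness part is real, and your proposed repair (a shadow-measure minimality argument) is unnecessarily heavy. The paper closes it in one line: because of the saturated inequality chain, an optimizer's induced second marginals $\tilde\chi^\pm$ have the same mean as $\chi^\pm_{\mu,\nu}$, and the stochastic orders $\chi^-_{\mu,\nu}\leq_{sto}\tilde\chi^-$, $\tilde\chi^+\leq_{sto}\chi^+_{\mu,\nu}$ with equal means and equal total masses immediately force $\tilde\chi^\pm=\chi^\pm_{\mu,\nu}$; then $\tilde\chi^0=\nu-\tilde\chi^--\tilde\chi^+=\chi^0_{\mu,\nu}$. (The sub/super/martingale structure of the three pieces falls out of equality in the discarded-term step and in Jensen's inequality, as it does in your argument.) You are missing this elementary observation — that stochastic dominance together with equal means of two sub-measures of $\nu$ of the same mass pins down the measures — and it is precisely what lets the paper avoid the machinery you reach for.
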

 \begin{proof}
 Similarly as in the proof of Lemma \ref{lem:mu_nu_decomposition}, we ease the notation by suppressing the dependence on $(\mu,\nu)$, and write $\eta^-=\eta^-_{\mu,\nu}$, etc.
 
 We first compute the total weak transport cost with respect to an arbitrary $\pi^*\in\Pi^*(\mu,\nu)$:
     \begin{align*}     &\int_\R\left\lvert x-\int_\R y\pi^*_x(dy)\right\lvert\mu(dx)\\&=\int_\R\left\lvert x-\int_\R y\pi^*_x(dy)\right\lvert\eta^-(dx)
     +\int_\R\left\lvert x-\int_\R y\pi^*_x(dy)\right\lvert\eta^0(dx)
     +\int_\R\left\lvert x-\int_\R y\pi^*_x(dy)\right\lvert\eta^+(dx)\\
     &=\int_\R\left\lvert x-\int_\R y\pi^-_x(dy)\right\lvert\eta^-(dx)
     +\int_\R\left\lvert x-\int_\R y\pi^0_x(dy)\right\lvert\eta^0(dx)
     +\int_\R\left\lvert x-\int_\R y\pi^+_x(dy)\right\lvert\eta^+(dx)\\
     &=\int_\R\left(\int_\R y\pi^-_x(dy)-x\right)\eta^-(dx)
     +\int_\R\left(x-\int_\R y\pi^+_x(dy)\right)\eta^+(dx)\\
     &=(\overline{\chi^-}-\overline{\eta^-})+(\overline{\eta^+}-\overline{\chi^+});
     \end{align*}
     the first equality uses that $\mu=\eta^-+\eta^0+\eta^+$ (see \eqref{eq:mu_decomposition}), while  for the remaining equalities we used the definition of $\pi^*$ (see \eqref{eq:optimal_pi}).

     We will now show that, for an arbitrary $\tilde\pi\in\Pi(\mu,\nu)$,
     $$
\int_\R\left\lvert x-\int_\R y\tilde\pi_x(dy)\right\lvert\mu(dx)\geq(\overline{\chi^-}-\overline{\eta^-})+(\overline{\eta^+}-\overline{\chi^+}),
     $$
     which will finish the proof of the first statement.

     Fix $\tilde\pi\in\Pi(\mu,\nu)$. Then
     \begin{align*}
\int_\R\left\lvert x-\int_\R y\tilde\pi_x(dy)\right\lvert\mu(dx)&\geq \int_\R\left\lvert x-\int_\R y\tilde\pi_x(dy)\right\lvert\eta^-(dx)
+\int_\R\left\lvert x-\int_\R y\tilde\pi_x(dy)\right\lvert\eta^+(dx)\\
&\geq \left\lvert \overline{\eta^-}-\int_\R\int_\R y\tilde\pi_x(dy)\eta^-(dx)\right\lvert
+\left\lvert \overline{\eta^+}-\int_\R\int_\R y\tilde\pi_x(dy)\eta^+(dx)\right\lvert,
     \end{align*}
     where the second inequality follows from Jensen's inequality.
     
     Now define $\tilde\pi^-(dx,dy):=\tilde\pi_x(dy)\eta^-(dx)$ and note that $\tilde\pi^-\in\Pi(\eta^-,\tilde \chi^-)$  for some $\tilde\chi^-\in\sM$ with $\tilde\chi^-\leq \nu$ and $\tilde\chi^-(\R)=\eta^-(\R)$. Similarly, $\tilde\pi^+(dx,dy):=\tilde\pi_x(dy)\eta^+(dx)$ is such that $\tilde\pi^+\in\Pi(\eta^+,\tilde\chi^+)$ for some $\tilde\chi^+\in\sM$ with $\tilde\chi+\leq \nu$ and $\tilde\chi^+(\R)=\eta^+(\R)$. It follows that
     $$
     \int_\R\left\lvert x-\int_\R y\tilde\pi_x(dy)\right\lvert\mu(dx)\geq\left\lvert 
     \overline{\eta^-}-\overline{\tilde\chi^-}\right\lvert +\left\lvert \overline{\eta^+}-\overline{\tilde\chi^+}\right\lvert.
     $$
     Therefore, in order to finish the proof, it is sufficient to show that
     $$
\left\lvert 
     \overline{\eta^-}-\overline{\tilde\chi^-}\right\lvert +\left\lvert \overline{\eta^+}-\overline{\tilde\chi^+}\right\lvert\geq (\overline{\chi^-}-\overline{\eta^-})+(\overline{\eta^+}-\overline{\chi^+}).
     $$
     We will achieve this by showing that
     \begin{equation}\label{eq:optimalityEQ}
         \left\lvert 
     \overline{\eta^-}-\overline{\tilde\chi^-}\right\lvert \geq (\overline{\chi^-}-\overline{\eta^-})\quad\textrm{and}\quad \left\lvert \overline{\eta^+}-\overline{\tilde\chi^+}\right\lvert\geq (\overline{\eta^+}-\overline{\chi^+}).
     \end{equation}
     
     First, note that since $\chi^-$ is a restriction of $\nu$ to $(-\infty,x^-)$ (together with an appropriate amount of mass at $x^-$), any other measure $\hat\chi^-\leq\nu$ with $\hat\chi^-(\R)=\chi^-(\R)$ has a larger mean. Indeed,
     $$
     {\chi^-}((-\infty,k])\geq \tilde\chi^-((-\infty,k]),\quad k\in\R,$$
     i.e., $\chi^-\leq_{sto}\tilde\chi^-$. Equivalently,
     $$
\int_\R f(k) \chi^-(dk)\leq \int_\R f(k)\tilde\chi^-(dk),\quad\textrm{for all non-decreasing }k\mapsto f(k),
     $$
     and by taking $k\mapsto f(k)=k$ we obtain that $\overline{\chi^-}\leq\overline{\tilde\chi^-}$.
     
     In fact, if $k\mapsto g(k)$ is convex and non-decreasing, using that $\eta^-\leq_{ci}\chi^-\leq_{sto}\tilde\chi^-$ we have that
     $$
\int_\R g(k) \eta^-(dk)\leq \int_\R g(k) \chi^-(dk)\leq\int_\R g(k) \tilde\chi^-(dk)
     $$
     (i.e., $\eta^-\leq_{ci}\chi^-\leq_{ci}\tilde\chi^-$),
     so that $\overline{\eta^-}\leq \overline{\chi^-}\leq \overline{\tilde\chi^-}$. It follows that
     
     $$
     \left\lvert \overline{\eta^-}-\overline{\tilde\chi^-}\right\lvert=\overline{\tilde\chi^-}-\overline{\eta^-}\geq \overline{\chi^-}-\overline{\eta^-}.
     $$

     We now use a symmetric argument for $\eta^+,\chi^+,\tilde\chi^+$. Observe that $$
     {\chi^+}((k,\infty))\geq \tilde\chi^+((k,\infty)),\quad k\in\R,$$
     and therefore 
     $\tilde\chi^+\leq_{sto}\chi^+$, or equivalently,
     $$
\int_\R f(k) \tilde\chi^+(dk)\leq \int_\R f(k)\chi^+(dk),\quad\textrm{for all non-decreasing }k\mapsto f(k).
     $$
     Then, if $k\mapsto g(k)$ is convex and non-increasing, using that $\eta^+\leq_{cd}\chi^+$, we have that
     $$
\int_\R g(k) \eta^+(dk)\leq \int_\R g(k) \chi^+(dk)\leq\int_\R g(k) \tilde\chi^+(dk)
     $$
     (i.e., $\eta^+\leq_{cd}\chi^+\leq_{cd}\tilde\chi^+$),
     so that $\overline{\eta^+}\geq \overline{\chi^+}\geq \overline{\tilde\chi^+}$. It follows that
     $$
     \left\lvert \overline{\eta^+}-\overline{\tilde\chi^+}\right\lvert=\overline{\eta^+}-\overline{\tilde\chi^+}\geq \overline{\eta^+}-\overline{\chi^+}.
     $$
     
     Combining both cases shows that 
     $$
     V(\mu,\nu)=\int_\R\left\lvert x-\int_\R y\pi^*_x(dy)\right\lvert\mu(dx)=(\overline{\chi^-_{\mu,\nu}}-\overline{\eta^-_{\mu,\nu}})+(\overline{\eta^+_{\mu,\nu}}-\overline{\chi^+_{\mu,\nu}}).$$
     Furthermore, recall that by Lemma \ref{lem:mu_nu_decomposition}, $ \eta^-_{\mu,\nu}\leq_{ci}\chi^-_{\mu,\nu}$ and $\eta^+_{\mu,\nu}\leq_{cd}\chi^+_{\mu,\nu}$. Then from Lemma \ref{lem:WOTsuper} we further have that $V(\eta^-_{\mu,\nu},\chi^-_{\mu,\nu})=(\overline{\chi^-_{\mu,\nu}}-\overline{\eta^-_{\mu,\nu}})$ and $V(\eta^+_{\mu,\nu},\chi^+_{\mu,\nu})=(\overline{\eta^+_{\mu,\nu}}-\overline{\chi^+_{\mu,\nu}})$.

     Finally, if $\tilde\pi$ is another optimizer of \eqref{eq:WOT_l1}, then \eqref{eq:optimalityEQ} must hold with equality, and thus $\overline{\tilde\chi^-}=\overline{\chi^-}$ and $\overline{\tilde\chi^+}=\overline{\chi^+}$. But then since $\chi^-\leq_{sto}\tilde\chi^-$ and $\tilde\chi^+\leq_{sto}\chi^+$ (and all measures are sub-measures of $\nu$), we must have that $\tilde\chi^\pm=\chi^\pm$. In addition, the second marginal of $\tilde\pi^0(dx,dy):=\tilde\pi_x(dy)\eta^0(dx)$ is then given by $(\nu-\tilde\chi^--\tilde\chi^+)=\chi^0$. This proves the second statement of the theorem and thus finishes the proof.
 \end{proof}
 \subsection{{Connection to Wasserstein projection of $\mu$ on $\{\hat\mu\in\sM:\hat\mu\leq_c\nu\}$}}\label{subsec:Lipschitz_map}
In this section we work with $\mu,\nu \in\sM$ with $\mu(\R)=\nu(\R)$ and a convex $h:\R\to\R_+$ satisfying $h(0)=0$. Define
  $$
 \bar V_h(\mu,\nu):=\inf_{\pi\in\Pi(\mu,\nu)}\int_{\R\times\R}h(x-y)\pi(dx,dy).
  $$

We follow Backhoff-Veraguas et al. \cite{backhoff2020weak} and call $S:\R\to\R$ \textit{admissible} if
\begin{enumerate}
  \item[i)] $S$ is non-decreasing,
  \item[ii)] $S$ is 1-Lipschitz,
  \item[iii)] the push-forward measure $S(\mu)$ satisfies $S(\mu)\leq_c \nu$.
  \end{enumerate}
  
Gozlan et al. \cite{gozlan2018characterization} (see also Alfonsi et al. \cite{alfonsi2017sampling} for an equivalent result in higher dimensions) showed that
  \begin{equation}\label{eq:weak_equivalence}
 V_h(\mu,\nu)=\inf_{\hat\mu\leq_c\nu} \bar V_h(\hat\mu,\mu)=\inf_{\textrm{admissible }S} \bar V_h(S(\mu),\mu).
  \end{equation}
In particular, there exists an admissible ($\mu$-a.s. unique) map $T^*$, such that 
$$
 V_h(\mu,\nu)=\bar V_h(T^*(\mu),\mu)\quad\textrm{for all convex }h\geq 0\textrm{ with }h(0)=0,
$$
i.e., the optimizer $T^*$ (and the induced push-forward measure $\mu^*:=T^*(\mu)$) does not depend on the choice of the convex cost function $h$. Furthermore, the map $T^*$ can be uniquelly identified by one of the following conditions (see Backhoff-Veraguas et al. \cite{backhoff2020weak}):
\begin{enumerate}
\item $S(\mu)\leq_c T^*(\mu)\leq_c\nu\quad\textrm{for all admissible }S$,
\item $T^*$ is the unique admissible map which has slope 1 on each interval $(T^*)^{-1}(I)$, where $I$ is irreducible\footnote{Two measures $\eta,\chi\in\sM$ with $\eta(\R)=\chi(\R)$ satisfies $\eta\leq_c\chi$ if and only if $\overline\eta=\overline\chi$ and $P_\eta\leq P_\chi$ on $\R$. By the continuity of the potential functions, the set $U:=\{k\in\R: P_\eta(k)< P_\chi(k)\}$ is open and thus can be represented as a countable union of disjoint open intervals, $U=\bigcup_n I_n$. These intervals $I_n$ are called irreducible w.r.t. $(\eta,\chi)$.}  w.r.t. $(T^*(\mu),\nu)$.
\end{enumerate}

Recall $\eta^-_{\mu,\nu},~\eta^+_{\mu,\nu},~\chi^-_{\mu,\nu},~\chi^+_{\mu,\nu}\in\sM$ given by \eqref{eq:mu_decomposition} and \eqref{eq:nu_decompisition}. Now let $T^-,T^+$ be the optimal maps for $V_h(\eta^-_{\mu,\nu},\chi^-_{\mu,\nu})$ and $V_h(\eta^+_{\mu,\nu},\chi^+_{\mu,\nu})$, respectively, i.e., for any convex $h:\R\to\R_+$ with $h(0)=0$ we have that
\begin{equation}\label{eq:T-T+}
V_h(\eta^-_{\mu,\nu},\chi^-_{\mu,\nu})=\bar V_h(T^-(\eta^-_{\mu,\nu}),\eta^-_{\mu,\nu})\quad \textrm{and}\quad V_h(\eta^+_{\mu,\nu},\chi^+_{\mu,\nu})=\bar V_h(T^+(\eta^+_{\mu,\nu}),\eta^+_{\mu,\nu}).
\end{equation}

\begin{proposition}\label{prop:optimalT}
Let $\mu,\nu\in \sM$ with $\mu(\R)=\nu(\R)$ and $\min\{p_{\mu,\nu},c_{\mu,\nu}\}>0$. Define $\hat T:\R\to\R$ by 
$$
\hat T(x)=\begin{cases}T^-(x),\quad &x<x^-_{\mu,\nu}\\
x,\quad & x^-_{\mu,\nu}\leq x\leq x^+_{\mu,\nu}\\
T^+(x),\quad & x^+_{\mu,\nu}<x.
\end{cases}
$$
Then $T^*=\hat T$.
\end{proposition}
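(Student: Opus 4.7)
Since the equivalence \eqref{eq:weak_equivalence} admits a ($\mu$-a.s.) unique admissible minimizer $T^*$ which, moreover, does not depend on the choice of convex $h\geq 0$ with $h(0)=0$, it suffices to verify: (a) $\hat T$ is admissible for $(\mu,\nu)$; and (b) $\hat T$ attains the weak transport minimum $V_h(\mu,\nu)$ for some convex $h$. I will take $h(x)=|x|$. Uniqueness will then force $\hat T=T^*$.

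\textbf{Part (b): $\hat T$ attains $V(\mu,\nu)$.} Splitting $\mu$ via \eqref{eq:mu_decomposition} and exploiting that $\hat T$ is the identity on $[x^-_{\mu,\nu},x^+_{\mu,\nu}]$,
\begin{align*}
\bar V_{|\cdot|}(\hat T(\mu),\mu)&=\int_\R|x-\hat T(x)|\mu(dx)\\
&=\bar V_{|\cdot|}(T^-(\eta^-_{\mu,\nu}),\eta^-_{\mu,\nu})+0+\bar V_{|\cdot|}(T^+(\eta^+_{\mu,\nu}),\eta^+_{\mu,\nu})\\
&=V(\eta^-_{\mu,\nu},\chi^-_{\mu,\nu})+V(\eta^+_{\mu,\nu},\chi^+_{\mu,\nu})=V(\mu,\nu),
\end{align*}
where the second-to-last equality uses \eqref{eq:T-T+} with $h(x)=|x|$ and the last invokes Theorem \ref{thm:coupling}. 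Hence $\hat T$ realizes the value of the $L^1$-WOT problem.

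\textbf{Part (a): $\hat T$ is admissible.} The convex-order constraint $\hat T(\mu)\leq_c\nu$ is additive along the decomposition: writing $\hat T(\mu)=T^-(\eta^-_{\mu,\nu})+\eta^0_{\mu,\nu}+T^+(\eta^+_{\mu,\nu})$ and $\nu=\chi^-_{\mu,\nu}+\chi^0_{\mu,\nu}+\chi^+_{\mu,\nu}$, the admissibility of $T^\pm$ for the respective subproblems yields $T^\pm(\eta^\pm_{\mu,\nu})\leq_c\chi^\pm_{\mu,\nu}$, while $\eta^0_{\mu,\nu}\leq_c\chi^0_{\mu,\nu}$ is Lemma \ref{lem:mu_nu_decomposition}; the three convex orders have matching masses component-by-component and sum to $\hat T(\mu)\leq_c\nu$. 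Each piece of $\hat T$ is itself non-decreasing and 1-Lipschitz by hypothesis; the issue reduces to compatibility at the seams $x^\pm_{\mu,\nu}$.

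\textbf{Main obstacle: boundary continuity of $T^\pm$.} The technical heart of the argument is to show $T^-(x^-_{\mu,\nu}-)=x^-_{\mu,\nu}$ and $T^+(x^+_{\mu,\nu}+)=x^+_{\mu,\nu}$, so that $\hat T$ has no jumps at the seams and inherits monotonicity and the 1-Lipschitz bound globally. One direction is automatic: from $T^-(\eta^-_{\mu,\nu})\leq_c\chi^-_{\mu,\nu}$ and $\mathrm{supp}(\chi^-_{\mu,\nu})\subseteq(-\infty,x^-_{\mu,\nu}]$ one gets $T^-(x)\leq x^-_{\mu,\nu}$ on $(-\infty,x^-_{\mu,\nu})$, and symmetrically $T^+(x)\geq x^+_{\mu,\nu}$ on $(x^+_{\mu,\nu},\infty)$. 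To exclude strict inequality in the limit, invoke characterization (1) of $T^\pm$: were $T^-(x^-_{\mu,\nu}-)=a<x^-_{\mu,\nu}$, the map $\tilde T^-(x):=\min\{T^-(x)+(x^-_{\mu,\nu}-a),\,x^-_{\mu,\nu}\}$ is still non-decreasing, 1-Lipschitz, and (because the truncation only redistributes mass to $x^-_{\mu,\nu}$ which lies in $\mathrm{supp}(\chi^-_{\mu,\nu})$) satisfies $\tilde T^-(\eta^-_{\mu,\nu})\leq_c\chi^-_{\mu,\nu}$, yet $T^-(\eta^-_{\mu,\nu})\leq_c\tilde T^-(\eta^-_{\mu,\nu})$ strictly, contradicting the maximality of $T^-(\eta^-_{\mu,\nu})$. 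The reflected argument handles $T^+$. With these seam identities in hand, $\hat T$ is non-decreasing and 1-Lipschitz on $\R$, admissibility is established, and by parts (a)+(b) together with the uniqueness of $T^*$ we conclude $\hat T=T^*$.
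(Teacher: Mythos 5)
There are two genuine gaps in your argument, and they compound.

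\textbf{Gap 1: the $L^1$ cost does not pin down $T^*$.} In Part (b) you show $\hat T$ attains $V_h(\mu,\nu)$ for $h(\cdot)=\lvert\cdot\lvert$ and conclude $\hat T=T^*$ by appealing to the uniqueness in \eqref{eq:weak_equivalence}. But the uniqueness statement there concerns the map that is optimal \emph{simultaneously for all} convex $h\geq 0$ with $h(0)=0$; it is not uniqueness of the optimizer for a single, \emph{non-strictly-convex} $h$. For the $L^1$ cost the admissible optimizer is generically far from unique: any non-decreasing $1$-Lipschitz $\tilde T^-$ satisfying $\tilde T^-(x)\geq x$ and $\tilde T^-(\eta^-_{\mu,\nu})\leq_c\chi^-_{\mu,\nu}$ (and there are, in general, many such maps) can replace $T^-$ in the definition of $\hat T$ and still produce an admissible map achieving $V(\mu,\nu)$, as one sees from Jensen's inequality. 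So (a)$+$(b) with $h(\cdot)=\lvert\cdot\lvert$ do not imply $\hat T=T^*$. The paper instead invokes characterization (2), identifying $T^*$ as the unique admissible map with slope $1$ on every preimage $\hat T^{-1}(I_n)$ of an irreducible component of $(\hat T(\mu),\nu)$ --- a criterion that is cost-free --- and verifies it directly from the decomposition at $x^\pm_{\mu,\nu}$ and the corresponding property of $T^\pm$. A variant of your argument could work with a \emph{strictly} convex $h$, but then the first equality $\bar V_h(\hat T(\mu),\mu)=\int_\R h(x-\hat T(x))\mu(dx)$ already presupposes monotonicity of $\hat T$, i.e.\ Part (a).

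\textbf{Gap 2: the competitor in your boundary argument is not admissible.} You take $\tilde T^-(x)=\min\{T^-(x)+(x^-_{\mu,\nu}-a),x^-_{\mu,\nu}\}$ and claim $T^-(\eta^-_{\mu,\nu})\leq_c\tilde T^-(\eta^-_{\mu,\nu})$ strictly. Since $T^-$ is non-decreasing and bounded above by $a$ on $(-\infty,x^-_{\mu,\nu})$, the truncation never activates, so $\tilde T^-=T^-+(x^-_{\mu,\nu}-a)$ on the whole interval, whence $\overline{\tilde T^-(\eta^-_{\mu,\nu})}=\overline{T^-(\eta^-_{\mu,\nu})}+(x^-_{\mu,\nu}-a)\eta^-_{\mu,\nu}(\R)>\overline{T^-(\eta^-_{\mu,\nu})}=\overline{\chi^-_{\mu,\nu}}$. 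Convex order requires equal means, so neither $T^-(\eta^-_{\mu,\nu})\leq_c\tilde T^-(\eta^-_{\mu,\nu})$ nor $\tilde T^-(\eta^-_{\mu,\nu})\leq_c\chi^-_{\mu,\nu}$ can hold; the contradiction you assert does not arise. The boundary fact you actually need, namely $T^-(x)\geq x$ for $\eta^-_{\mu,\nu}$-a.e.\ $x$ (which yields the $1$-Lipschitz gluing), is obtained in the paper differently (cf.\ the proof of \cref{cor:monotonicityT}): since $\eta^-_{\mu,\nu}\leq_{ci}\chi^-_{\mu,\nu}$, Gozlan--Juillet \cite[Theorem~3.2]{gozlan2020mixture} gives $\eta^-_{\mu,\nu}\leq_{sto}T^-(\eta^-_{\mu,\nu})$, and since $T^-$ is non-decreasing this forces $T^-(x)\geq x$ $\eta^-_{\mu,\nu}$-a.s.
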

\begin{corollary}\label{cor:monotonicityT}
Let $\mu,\nu\in \sM$ with $\mu(\R)=\nu(\R)$ and $\min\{p_{\mu,\nu},c_{\mu,\nu}\}>0$. Then the optimal map $T^*$ is such that $x\mapsto D(x):=(T^*(x)-x)$ is non-increasing, and
$$
D> 0\textrm{ on }(-\infty,x^-_{\mu,\nu}),\quad D=0\textrm{ on }[x^-_{\mu,\nu},x^+_{\mu,\nu}],\quad D<0\textrm{ on }(x^+_{\mu,\nu},\infty).
$$
\end{corollary}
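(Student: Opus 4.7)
The corollary is essentially a direct consequence of Proposition \ref{prop:optimalT} combined with the defining properties of an admissible map. For the first claim that $D$ is non-increasing, I would note that $T^*$ is simultaneously non-decreasing (by property (i) of admissibility) and $1$-Lipschitz (by property (ii)); hence for $x\le y$ the chain $0\le T^*(y)-T^*(x)\le y-x$ rearranges to $D(y)\le D(x)$. The identity $D\equiv 0$ on $[x^-_{\mu,\nu},x^+_{\mu,\nu}]$ is immediate from the explicit formula $T^*(x)=x$ supplied by Proposition \ref{prop:optimalT}.

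The substantive content is the sign of $D$ on the two tails; I treat the left, the right being symmetric. On $(-\infty,x^-_{\mu,\nu})$, Proposition \ref{prop:optimalT} gives $T^*=T^-$, the optimal map for $V_h(\eta^-_{\mu,\nu},\chi^-_{\mu,\nu})$. By Lemma \ref{lem:mu_nu_decomposition}, $\eta^-_{\mu,\nu}\leq_{ci}\chi^-_{\mu,\nu}$, and Lemma \ref{lem:WOTsuper} together with $p_{\mu,\nu}>0$ forces the strict inequality $\overline{\chi^-_{\mu,\nu}}>\overline{\eta^-_{\mu,\nu}}$. Admissibility of $T^-$ and the submartingale structure of the optimal coupling (via the factorization ``$T^-$ then martingale'') yield $T^-(x)\ge x$ for $\eta^-_{\mu,\nu}$-a.e.\ $x$. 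To promote this to strict positivity on the full open interval, I would argue by contradiction: if $D(x_0)=0$ for some $x_0<x^-_{\mu,\nu}$, the monotonicity from the first paragraph together with the boundary value $D(x^-_{\mu,\nu})=0$ force $D\equiv 0$ on $[x_0,x^-_{\mu,\nu}]$; monotonicity and the $1$-Lipschitz property then pin $T^-$ to the identity on this subinterval. Comparing the potential of the resulting push-forward $T^-(\eta^-_{\mu,\nu})$ with that of $\chi^-_{\mu,\nu}$, using the explicit form \eqref{eq:nu_decompisition} and the admissibility constraint $T^-(\eta^-_{\mu,\nu})\leq_c\chi^-_{\mu,\nu}$ (which forces equality of barycenters), contradicts the strict gap $\overline{\chi^-_{\mu,\nu}}>\overline{\eta^-_{\mu,\nu}}$.

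The main obstacle is exactly this final strict-inequality step: the bound $T^-(x)\ge x$ is only $\eta^-_{\mu,\nu}$-a.e., and to conclude the strict pointwise inequality on the whole open interval one must combine the rigidity of monotone $1$-Lipschitz maps pinned at two points with the convex-order constraint and the strict mean gap forced by $p_{\mu,\nu}>0$. The symmetric assumption $c_{\mu,\nu}>0$ is used analogously for the right tail to deduce $D<0$ on $(x^+_{\mu,\nu},\infty)$, and this is precisely where the hypothesis $\min\{p_{\mu,\nu},c_{\mu,\nu}\}>0$ is decisively used.
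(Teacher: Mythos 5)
Your argument that $D$ is non-increasing—combining the two admissibility properties of $T^*$ (non-decreasing and $1$-Lipschitz), so that $0\le T^*(y)-T^*(x)\le y-x$ for $x\le y$—is correct and is in fact more self-contained than the paper's route, which invokes the quantile representation of $T^*$ from Alfonsi et al.\ \cite[Theorem 2.6]{alfonsi2017sampling}. The identification $T^-(x)\ge x$ $\eta^-_{\mu,\nu}$-a.e.\ also matches the paper, which cites Gozlan--Juillet \cite[Theorem 3.2]{gozlan2020mixture} at this step. (Note, though, that once monotonicity of $D$ and $D(x^-_{\mu,\nu})=0$ are in hand, the weak inequality $D\ge 0$ on $(-\infty,x^-_{\mu,\nu})$ already follows and the Gozlan--Juillet input is not needed for that part.)

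The genuine gap is in the strict-sign step, which you correctly flag as the crux but whose proposed resolution does not close. Suppose $D(x_0)=0$ for some $x_0<x^-_{\mu,\nu}$, so $T^-=\mathrm{id}$ on $[x_0,x^-_{\mu,\nu}]$. You want a contradiction with the barycenter gap $\overline{\chi^-_{\mu,\nu}}-\overline{\eta^-_{\mu,\nu}}=p_{\mu,\nu}>0$ via $T^-(\eta^-_{\mu,\nu})\le_c\chi^-_{\mu,\nu}$, which forces $\overline{T^-(\eta^-_{\mu,\nu})}=\overline{\chi^-_{\mu,\nu}}$. But $T^-$ is only the identity on $[x_0,x^-_{\mu,\nu}]$: on $(-\infty,x_0)$ the $1$-Lipschitz/non-decreasing constraints with $T^-(x_0)=x_0$ only pin $z\le T^-(z)\le x_0$, so $\int_{(-\infty,x_0)}(T^-(z)-z)\,\eta^-_{\mu,\nu}(dz)$ can equal $p_{\mu,\nu}$ as long as $P_\mu(x_0)=\int_{(-\infty,x_0)}(x_0-z)\,\eta^-_{\mu,\nu}(dz)\ge p_{\mu,\nu}$, which nothing rules out. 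Pinning $T^-$ to the identity on a sub-interval abutting $x^-_{\mu,\nu}$ is therefore compatible with the equal-barycenter constraint, and no contradiction follows from the mean comparison alone. The paper avoids this by arguing at the level of the potential functions: $D\equiv 0$ on $[x_0,x^-_{\mu,\nu}]$ is shown to contradict the definition $x^-_{\mu,\nu}=\inf\{k:\tilde P_{\mu,\nu}(k)=P_\mu(k)\}$. To repair your route you would need to extract information beyond the mean from $T^-(\eta^-_{\mu,\nu})\le_c\chi^-_{\mu,\nu}$ (e.g.\ compare $P_{T^-(\eta^-_{\mu,\nu})}$ and $P_\nu$ on $(x_0,x^-_{\mu,\nu})$ and use the maximality of $T^*(\mu)$ in $\le_c$) or pass to the potential-function argument.
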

\begin{proof}
Since $T^-$ (resp. $T^+$) is optimal for $V_h(\eta^-_{\mu,\nu},\chi^-_{\mu,\nu})$ (resp. $V_h(\eta^+_{\mu,\nu},\chi^+_{\mu,\nu})$) and $\eta^-_{\mu,\nu}\leq_{ci}\chi^-_{\mu,\nu}$ (resp. $\eta^+_{\mu,\nu}\leq_{cd}\chi^+_{\mu,\nu}$), by Gozlan and Juillet \cite[Theorem 3.2]{gozlan2020mixture}) we further have that $\eta^-_{\mu,\nu}\leq_{sto} T^-(\eta^-_{\mu,\nu})$ (resp. $T^+(\eta^+_{\mu,\nu})\leq_{sto}\eta^+_{\mu,\nu}$), and therefore $T^*(x)=T^-(x)\geq x$ for $\mu$-a.e. $x\leq x^-_{\mu,\nu}$ (resp. $T^*(x)=T^+(x)\leq x$ for $\mu$-a.e. $x\geq x^+_{\mu,\nu}$). Combining this with the representation of $T^*$ in terms of the quantile function of $T^*(\mu)$ (see Alfonsi et al. \cite[Theorem 2.6]{alfonsi2017sampling}), we immediately have that $D$ is non-increasing. Finally, if $D(x)=0$ for some $x<x^-_{\mu,\nu}$ (resp. $x>x^+_{\mu,\nu}$), then $D=0$ on $[x,x^-_{\mu,\nu}]$ (resp. $[x^+_{\mu,\nu},x]$), which contradicts the definition of $x^-_{\mu,\nu}$ (resp. $x^+_{\mu,\nu}$).
\end{proof}
\begin{remark}
The assumption $\min\{p_{\mu,\nu},c_{\mu,\nu}\}>0$ covers the most general case. If  $\mu\leq_{cd}\nu$ (resp.  $\mu\leq_{ci}\nu$) we have that $p_{\mu,\nu}=0<c_{\mu,\nu}$ (resp. $c_{\mu,\nu}=0<p_{\mu,\nu}$). In this case, set $k^-_{\mu,\nu}:=-\infty$ (resp. $k^+_{\mu,\nu}:=+\infty$). Then the statement of Proposition \ref{prop:optimalT} still holds (but with this modified value of $k^-_{\mu,\nu}$ (resp. $k^+_{\mu,\nu}$)). Note that, in the case $\mu\leq_c\nu$, we take $k^-_{\mu,\nu}=-\infty<+\infty=k^+_{\mu,\nu}$ and then $x\mapsto T^*(x)=x$ is optimal.\end{remark}

\begin{proof}[Proof of Proposition \ref{prop:optimalT}]
    By Lemma \ref{lem:mu_nu_decomposition}, $\eta^-_{\mu,\nu}\leq_{ci}\chi^-_{\mu,\nu}$ and $\eta^+_{\mu,\nu}\leq_{cd}\chi^+_{\mu,\nu}$, and therefore $\sup\{k\in\textrm{supp}(\eta^-_{\mu,\nu})\}\leq \sup\{k\in\textrm{supp}(\chi^-_{\mu,\nu})\}=x^-_{\mu,\nu}$ and $x^+_{\mu,\nu}=\inf\{k\in\textrm{supp}(\chi^+_{\mu,\nu})\}\leq \inf\{k\in\textrm{supp}(\eta^+_{\mu,\nu})\}$, respectively. It follows that $T^-(x)\leq x^-_{\mu,\nu}$ for all $x<x^-_{\mu,\nu}$ and $T^+(x)\geq x^+_{\mu,\nu}$ for all $x>x^+_{\mu,\nu}$. Then since $T^-$ (resp. $T^+$) is admissible w.r.t. $(\eta^-_{\mu,\nu},\chi^-_{\mu,\nu})$ (resp. $(\eta^+_{\mu,\nu},\chi^+_{\mu,\nu})$), from the definition of $\hat T$ we immediately have that $\hat T$ is admissible w.r.t. $(\mu,\nu)$. Indeed, the definition of $T^-$ as an optimal map for $V_h(\eta^-_{\mu,\nu},\chi^-_{\mu,\nu})$ implies that $T^-(\eta^-_{\mu,\nu})\leq_c \chi^-_{\mu,\nu}$ (by symmetry we then also conclude that $T^+(\eta^+_{\mu,\nu})\leq_c \chi^+_{\mu,\nu}$). Moreover, since $\hat T (x)=x$ for $x\in[x^-_{\mu,\nu},x^+_{\mu,\nu}]$, we have that $\hat T(\mu)\lvert_{[x^-_{\mu,\nu},x^+_{\mu,\nu}]}=\mu\lvert_{[x^-_{\mu,\nu},x^+_{\mu,\nu}]}=\eta^0_{\mu,\nu}\leq_c \chi^0_{\mu,\nu}$, where $\chi^0_{\mu,\nu}$ corresponds to the restriction of $\nu$ to $(x^-_{\mu,\nu},x^+_{\mu,\nu})$ together with appropriate amounts of mass at the boundaries (recall \eqref{eq:nu_decompisition}). Hence $\hat T(\mu)\leq_c\chi^-_{\mu,\nu}+\chi^0_{\mu,\nu}+\chi^+_{\mu,\nu}=\nu$.
    
    Furthermore, $P_{\hat T(\mu)}(k)=P_\nu(k)$ for $k\in\{x^-_{\mu,\nu},x^+_{\mu,\nu}\}$, and therefore for any irreducible interval $I_n$ w.r.t. $(\hat T(\mu),\nu)$ we have that both, $I_n$ and $\hat T^{-1}(I_n)$, belong to either $(-\infty,x^-_{\mu,\nu})$, or $[x^-_{\mu,\nu},x^+_{\mu,\nu}]$, or $(x^+_{\mu,\nu},\infty)$. But $\hat T=T^-$ (resp. $\hat T=T^+$) on $(-\infty,x^-_{\mu,\nu})$ (resp. $(x^+_{\mu,\nu},\infty)$), and thus $\hat T$ and $T^-$ (resp. $T^+$) share the same irreducible intervals on $(-\infty,x^-_{\mu,\nu})$ (resp. $(x^+_{\mu,\nu},\infty)$). It follows that $\hat T$ has slope 1 on $\hat T^{-1}(I_n)$ for every irreducible $I_n$ satisfying either $I_n\subseteq (-\infty,x^-_{\mu,\nu})$ or $I_n\subseteq (x^+_{\mu,\nu},\infty)$. On the other hand, if $I_n\subset [x^-_{\mu,\nu},x^+_{\mu,\nu}]$, then $\hat T^{-1}(I_n)=I_n$, and therefore (since $\hat T(x)=x$ for all $x\in[x^-_{\mu,\nu},x^+_{\mu,\nu}]$), $\hat T$ also has slope 1 for such $I_n$. We conclude that $\hat T$ has slope 1 on each $\hat T^{-1}(I_n)$. Since this last property is satisfied only by $T^*$, we conclude that $\hat T= T^*$.
\end{proof}

We can ow use the results of Proposition \ref{prop:optimalT} and give a version of Theorem \ref{thm:coupling} for general cost functions $h$.

\begin{corollary}\label{cor:valueGeneral}
Let $\mu,\nu\in \sM$ with $\mu(\R)=\nu(\R)$ and $\min\{p_{\mu,\nu},c_{\mu,\nu}\}>0$. Suppose $h:\R\to\R_+$ is convex and $h(0)=0$. Then 
\begin{align*}
V_h(\mu,\nu)&=V_h(\eta^-_{\mu,\nu},\chi^-_{\mu,\nu})+V_h(\eta^+_{\mu,\nu},\chi^+_{\mu,\nu})\\&=\inf_{\pi\in\Pi_{Sub}(\eta^-_{\mu,\nu},\chi^-_{\mu,\nu})}\int_\R h\left(x-\int_\R y\pi_x(dy)\right) \eta^-_{\mu,\nu}(dx)\\&\quad\quad+\inf_{\pi\in\Pi_{Sup}(\eta^+_{\mu,\nu},\chi^+_{\mu,\nu})}\int_\R h\left( x-\int_\R y\pi_x(dy)\right) \eta^+_{\mu,\nu}(dx).
\end{align*}
\end{corollary}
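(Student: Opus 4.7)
The plan is to combine the WOT/OT equivalence \eqref{eq:weak_equivalence} with the piecewise description of $T^*=\hat T$ obtained in Proposition \ref{prop:optimalT}. From \eqref{eq:weak_equivalence} and Proposition \ref{prop:optimalT}, $V_h(\mu,\nu)$ equals the classical optimal transport cost between $\mu$ and $\hat T(\mu)$ for the cost function $h(x-y)$. Since $h$ is convex, this cost is submodular on $\R\times\R$, hence minimized by the co-monotone (Hoeffding--Fr\'echet) coupling; as $\hat T$ is non-decreasing, the pair $(X,\hat T(X))$ with $X\sim\mu$ is precisely this coupling. Therefore
\[ V_h(\mu,\nu)=\int_\R h\bigl(x-\hat T(x)\bigr)\,\mu(dx). \]

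Next, I would split this integral along $\mu=\eta^-_{\mu,\nu}+\eta^0_{\mu,\nu}+\eta^+_{\mu,\nu}$. On $[x^-_{\mu,\nu},x^+_{\mu,\nu}]$ one has $\hat T(x)=x$, contributing $h(0)=0$; on the left (resp.\ right) tail, $\hat T=T^-$ (resp.\ $T^+$) contributes $\int h(x-T^\pm(x))\,\eta^\pm_{\mu,\nu}(dx)$. Applying the same monotone-coupling reduction to the pairs $(\eta^\pm_{\mu,\nu},\chi^\pm_{\mu,\nu})$, together with the definitions \eqref{eq:T-T+} of $T^\pm$ as the associated optimal Monge maps, these tail integrals equal $V_h(\eta^\pm_{\mu,\nu},\chi^\pm_{\mu,\nu})$. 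This proves the first equality.

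For the second equality I treat only the left-tail identity, as the right-tail one is symmetric. The bound $V_h(\eta^-_{\mu,\nu},\chi^-_{\mu,\nu})\leq\inf_{\Pi_{Sub}}\int h(x-\bar\pi_x)\,\eta^-_{\mu,\nu}(dx)$ is immediate from $\Pi_{Sub}\subseteq\Pi$. For the reverse, I exhibit an explicit $\pi\in\Pi_{Sub}$ attaining the WOT value. Admissibility of $T^-$ for $(\eta^-_{\mu,\nu},\chi^-_{\mu,\nu})$ gives $T^-(\eta^-_{\mu,\nu})\leq_c\chi^-_{\mu,\nu}$, so by Strassen there is a martingale coupling $M\in\Pi_M(T^-(\eta^-_{\mu,\nu}),\chi^-_{\mu,\nu})$. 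Compose the Monge map $T^-$ with $M$:
\[ \pi(dx,dy):=\eta^-_{\mu,\nu}(dx)\,M_{T^-(x)}(dy). \]
One then checks $\pi\in\Pi(\eta^-_{\mu,\nu},\chi^-_{\mu,\nu})$; the martingale property of $M$ yields $\bar\pi_x=T^-(x)$; Corollary \ref{cor:monotonicityT} applied to $(\eta^-_{\mu,\nu},\chi^-_{\mu,\nu})$ ensures $T^-(x)\geq x$ on $\supp(\eta^-_{\mu,\nu})$, so $\pi\in\Pi_{Sub}$; and its WOT cost is $\int h(x-T^-(x))\,\eta^-_{\mu,\nu}(dx)=V_h(\eta^-_{\mu,\nu},\chi^-_{\mu,\nu})$ by the preceding step.

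The main delicate point I anticipate is the submodularity of $h(x-y)$ for convex but possibly asymmetric $h$, which underpins the monotone-coupling optimality used in the very first step; this follows from the two-point majorization $\{x-y,\,x'-y'\}\prec\{x-y',\,x'-y\}$ whenever $x\leq x'$ and $y\leq y'$. The remaining verifications---the marginals of the composed coupling $\pi$, and the sign $T^-(x)\geq x$ supplied by Corollary \ref{cor:monotonicityT}---are routine.
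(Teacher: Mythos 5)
Your proof is correct, and for the first equality it runs along essentially the same lines as the paper's: reduce via \eqref{eq:weak_equivalence} and Proposition~\ref{prop:optimalT} to $\int_\R h(x-T^*(x))\,\mu(dx)$, split along $\mu=\eta^-_{\mu,\nu}+\eta^0_{\mu,\nu}+\eta^+_{\mu,\nu}$, note that $h(0)=0$ kills the middle piece, and identify the two tail integrals with $V_h(\eta^\pm_{\mu,\nu},\chi^\pm_{\mu,\nu})$ via the maps $T^\pm$. Your contribution here is to make explicit the step $\bar V_h(T^*(\mu),\mu)=\int h(x-T^*(x))\,\mu(dx)$, which the paper asserts without comment: since $T^*$ is non-decreasing, $(X,T^*(X))$ is the comonotone coupling, and convexity of $h$ makes $h(x-y)$ submodular, so this coupling is optimal. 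That is a worthwhile gap-fill.

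For the second equality you take a genuinely different route. The paper simply cites Gozlan--Juillet \cite[Theorem~3.2]{gozlan2020mixture}, which says that for $\eta^-_{\mu,\nu}\leq_{ci}\chi^-_{\mu,\nu}$ the infimum of the barycentric cost over $\Pi(\eta^-_{\mu,\nu},\chi^-_{\mu,\nu})$ is already attained on $\Pi_{Sub}(\eta^-_{\mu,\nu},\chi^-_{\mu,\nu})$, and symmetrically on the right tail. You instead prove the reverse inequality by exhibiting an explicit submartingale coupling attaining $V_h$: take $T^-$ (admissible, hence $T^-(\eta^-_{\mu,\nu})\leq_c\chi^-_{\mu,\nu}$), draw a martingale kernel $M$ from Strassen, and set $\pi(dx,dy)=\eta^-_{\mu,\nu}(dx)\,M_{T^-(x)}(dy)$; the marginal, conditional-mean and sign checks all go through (the sign $T^-(x)\geq x$ on $(-\infty,x^-_{\mu,\nu})$ is exactly what Corollary~\ref{cor:monotonicityT} together with Proposition~\ref{prop:optimalT} give, though you should really cite that pair rather than ``Corollary~\ref{cor:monotonicityT} applied to $(\eta^-_{\mu,\nu},\chi^-_{\mu,\nu})$'', since $p_{\eta^-_{\mu,\nu},\chi^-_{\mu,\nu}}=0$ means the corollary's hypothesis fails for that pair on its own). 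Your construction is more self-contained and exposes a concrete optimizer; the paper's citation is shorter and gets the general statement for free. Both are valid.
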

\begin{proof}
    By Proposition \ref{prop:optimalT}, and since $h(0)=0$, we have that
    \begin{align*}
V_h(\mu,\nu)=\bar V_h(T^*(\mu),\mu)&=\int_\R h(x-T^*(x))\mu(dx)\\
&=\int_{(-\infty,x^-_{\mu,\nu})}h(x-T^-(x))\eta^-_{\mu,\nu}+\int_{(x^+_{\mu,\nu},\infty)}h(x-T^+(x))\eta^+_{\mu,\nu}\\
&=\bar V_h(T^-(\eta^-_{\mu,\nu}),\chi^-_{\mu,\nu}) + \bar V_h(T^+(\eta^+_{\mu,\nu}),\chi^+_{\mu,\nu})\\
&=V_h(\eta^-_{\mu,\nu},\chi^-_{\mu,\nu})+V_h(\eta^+_{\mu,\nu},\chi^+_{\mu,\nu}).
    \end{align*}
    On the other hand, since $\eta^-_{\mu,\nu}\leq_{ci}\chi^-_{\mu,\nu}$, by Gozlan and Juillet \cite[Theorem 3.2]{gozlan2020mixture} we have that
    $$
    V_h(\eta^-_{\mu,\nu},\chi^-_{\mu,\nu})=\inf_{\pi\in\Pi_{Sub}(\eta^-_{\mu,\nu},\chi^-_{\mu,\nu})}\int_\R h\left(x-\int_\R y\pi_x(dy)\right) \eta^-_{\mu,\nu}(dx).
    $$
    Since $\eta^+_{\mu,\nu}\leq_{cd}\chi^+_{\mu,\nu}$, a symmetric argument shows that 
    $$
    V_h(\eta^+_{\mu,\nu},\chi^+_{\mu,\nu})=\inf_{\pi\in\Pi_{Sup}(\eta^+_{\mu,\nu},\chi^+_{\mu,\nu})}\int_\R h\left(x-\int_\R y\pi_x(dy)\right) \eta^+_{\mu,\nu}(dx),
    $$
    which finishes the proof.
\end{proof}

\section{Constrained optimal transport}\label{sec:cot}
For fixed marginals $\mu,\nu\in\sP$, recall the definition of $\Pi^*(\mu,\nu)\subseteq\Pi(\mu,\nu)$; see \eqref{eq:optimal_pi}. Note that, due to Theorem \ref{thm:coupling},
\begin{equation}\label{eq:cotCouplings}
\Pi^*(\mu,\nu)=\argmin_{\pi\in\Pi(\mu,\nu)}\int_\R\lvert x-\overline{\pi_x}\lvert\mu(dx).
\end{equation}
The main goal of this section is to identify some canonical elements of this set.

For a (measurable) cost function $c:\R\times\R\to\R$, in this section we consider the following variant of the optimal transport problem:
\begin{equation}\label{eq:cot}
\inf_{\pi\in\Pi^*(\mu,\nu)}\int_{\R\times\R}c(x,y)\pi(dx,dy). \end{equation}

We first observe that \eqref{eq:cot} generalizes the (sub/super-) martingale optimal transport problems introduced by Beiglb\"ock et al. \cite{BHLP13}, Beiglb\"ock and Juillet \cite{BeiglbockJuillet:16}, and Nutz and Stebegg \cite{NutzStebegg.18}. Indeed, due to Lemma \ref{lem:WOTsuper}, we have that 
\begin{align*}
\mu\leq_c\nu&\iff\Pi^*(\mu,\nu)=\Pi_M(\mu,\nu),\\
\mu\leq_{cd}\nu&\iff\Pi^*(\mu,\nu)=\Pi_{Sup}(\mu,\nu),\\
\mu\leq_{ci}\nu&\iff\Pi^*(\mu,\nu)=\Pi_{Sub}(\mu,\nu).
\end{align*}

\begin{lemma}\label{lem:Pi*properties}
Let $\mu,\nu\in\sP$. Then, $\Pi^*(\mu,\nu)$, as in \eqref{eq:cotCouplings}, 
    is convex and compact (wrt weak topology induced by the continuous and bounded functions).
\end{lemma}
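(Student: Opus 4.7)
The plan is to prove convexity by a direct triangle-inequality argument and to prove compactness by realizing $\Pi^*(\mu,\nu)$ as a weakly closed subset of the weakly compact space $\Pi(\mu,\nu)$.

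For convexity, take $\pi_1,\pi_2\in\Pi^*(\mu,\nu)$ and $t\in[0,1]$, and set $\pi=t\pi_1+(1-t)\pi_2\in\Pi(\mu,\nu)$. Disintegration with respect to $\mu$ gives $\pi_x=t\pi_{1,x}+(1-t)\pi_{2,x}$ for $\mu$-a.e.\ $x$, so $\overline{\pi_x}=t\overline{\pi_{1,x}}+(1-t)\overline{\pi_{2,x}}$. The triangle inequality then yields
\[
\int_\R\lvert x-\overline{\pi_x}\rvert\mu(dx)\le t\int_\R\lvert x-\overline{\pi_{1,x}}\rvert\mu(dx)+(1-t)\int_\R\lvert x-\overline{\pi_{2,x}}\rvert\mu(dx)=V(\mu,\nu),
\]
while the reverse inequality always holds on $\Pi(\mu,\nu)$, so by the characterization \eqref{eq:cotCouplings} we conclude $\pi\in\Pi^*(\mu,\nu)$.

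For compactness, I first record the classical fact that $\Pi(\mu,\nu)$ is weakly compact: the elementary estimate $\pi((K^c\times\R)\cup(\R\times K^c))\le\mu(K^c)+\nu(K^c)$ for compact $K\subset\R$ gives uniform tightness via Prokhorov, and the marginal constraints pass to weak limits by testing against continuous bounded functions of a single variable. By \eqref{eq:cotCouplings} it then suffices to show that the WOT functional
\[
F(\pi):=\int_\R\lvert x-\overline{\pi_x}\rvert\mu(dx)
\]
is lower semicontinuous on $\Pi(\mu,\nu)$, because $\Pi^*(\mu,\nu)=\{\pi\in\Pi(\mu,\nu):F(\pi)\le V(\mu,\nu)\}$ is then weakly closed and hence weakly compact.

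The lower semicontinuity of $F$ is the main (and essentially only non-routine) step. For continuous convex barycentric costs it is a standard WOT property and can be invoked from \cite{backhoff2019existence}. A self-contained argument uses the identity $\lvert z\rvert=\sup_{|\alpha|\le 1}\alpha z$ together with measurable selection to represent
\[
F(\pi)=\sup\Bigl\{\int_\R a(x)x\,\mu(dx)-\int_{\R^2}a(x)y\,\pi(dx,dy):a\in C_b(\R),\ \lVert a\rVert_\infty\le 1\Bigr\};
\]
each affine functional inside the supremum is weakly continuous on $\Pi(\mu,\nu)$, since the fixed finite first moments of $\mu$ and $\nu$ make $|y|$ (and hence $a(x)y$) uniformly integrable along weakly convergent sequences in $\Pi(\mu,\nu)$. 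A supremum of continuous (equivalently, l.s.c.) functionals is l.s.c., which finishes the argument.
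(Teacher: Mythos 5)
Your convexity argument is identical to the paper's. Your compactness argument, however, takes a genuinely different route. The paper leans on its structural characterization (Theorem~\ref{thm:coupling} / equation~\eqref{eq:optimal_pi}): every element of $\Pi^*(\mu,\nu)$ decomposes into a submartingale, a martingale, and a supermartingale piece, and the paper then reduces closedness to the closedness of $\Pi_{Sub}$, $\Pi_M$, $\Pi_{Sup}$ for fixed marginals, citing Beiglb\"ock et al.\ \cite{BHLP13} and adapting their argument (writing $\Pi_{Sup}$ as an intersection of closed sub-level sets of $\pi\mapsto\int f(x)(y-x)\,d\pi$). You instead prove directly that the barycentric WOT functional $F$ is weakly l.s.c.\ on $\Pi(\mu,\nu)$, via the representation of $F$ as a supremum over $a\in C_b(\R)$, $\lVert a\rVert_\infty\le 1$, of the weakly continuous affine functionals $\pi\mapsto\int a(x)x\,\mu(dx)-\int a(x)y\,\pi(dx,dy)$ (weak continuity of the second integral being exactly the uniform-integrability argument of \cite[Lemma~2.2]{BHLP13}, valid because the second marginal $\nu$ is fixed and has finite first moment). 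Then $\Pi^*(\mu,\nu)=\{F\le V(\mu,\nu)\}$ is a closed sub-level set, hence compact. One small point worth stating explicitly in your sup representation: the passage from measurable $a$ with $|a|\le 1$ (where the optimizer $a^*=\sgn(x-\overline{\pi_x})$ lives) to continuous bounded $a$ relies on density/Lusin and on $x-\overline{\pi_x}\in L^1(\mu)$, which holds since $\int|\overline{\pi_x}|\mu(dx)\le\int|y|\nu(dy)<\infty$. Interestingly, your route is precisely the one the paper sketches in Remark~\ref{rem:generalC} as the ``more general'' approach (there invoking \cite[Prop.~2.8]{backhoff2019existence} for l.s.c.), so you have in effect supplied a self-contained proof of that remark specialized to the $L^1$ barycentric cost. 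The paper's structural proof has the advantage of tying compactness to the explicit decomposition it has already established; your proof is shorter, bypasses the structure theorem entirely, and generalizes immediately to any convex barycentric cost.
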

Lemma \ref{lem:Pi*properties} ensures that, under mild regularity conditions for $c$, the problem \eqref{eq:cot} admits an optimizer. The proof of Corollary \ref{cor:cotMinimizer} is based on the standard (lower semi-continuity and compactness) arguments (see, for example, Beiglb\"ock et al. \cite[Theorem 1]{BHLP13}), and thus omitted.
\begin{corollary}\label{cor:cotMinimizer}
    Let $\mu,\nu\in\sP$. Suppose $c:\R\times\R\to\R$ is lower semi-continuous and $c(x,y)\geq - K(\lvert x\lvert+\lvert y\lvert)$ for all $x,y\in\R$, for some constant $K>0$.

    Then there exists $\pi^*\in\Pi^*(\mu,\nu)$ satisfying
    $$
\int_{\R\times \R}c(x,y)\pi^*(dx,dy)=\inf_{\pi\in\Pi^*(\mu,\nu)}\int_{\R\times \R}c(x,y)\pi(dx,dy).
    $$
\end{corollary}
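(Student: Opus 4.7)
The plan is to apply the direct method of the calculus of variations, leveraging the compactness of $\Pi^*(\mu,\nu)$ given by Lemma \ref{lem:Pi*properties} together with a standard lower semi-continuity argument for (possibly unbounded below) lsc integrands.

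First, I would reduce to a non-negative cost by setting $\tilde c(x,y) := c(x,y) + K(|x|+|y|)$. By hypothesis $\tilde c \geq 0$ on $\R^2$, and the lower semi-continuity of $c$ transfers to $\tilde c$. Because the marginals are fixed, for every $\pi \in \Pi^*(\mu,\nu)\subseteq\Pi(\mu,\nu)$,
$$
\int_{\R\times\R} \tilde c(x,y)\,\pi(dx,dy) = \int_{\R\times\R} c(x,y)\,\pi(dx,dy) + K\Bigl(\int_\R |x|\mu(dx) + \int_\R |y|\nu(dy)\Bigr),
$$
and the additive term is a finite constant since $\mu,\nu\in\sP$ have finite first moments. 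Thus the integrals of $c$ and $\tilde c$ differ by a constant on $\Pi^*(\mu,\nu)$, so the two minimization problems are equivalent and share the same set of optimizers.

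Second, I would pick a minimizing sequence $(\pi_n)_{n\ge 1}\subseteq \Pi^*(\mu,\nu)$ with $\int \tilde c\,d\pi_n \to \inf_{\pi\in\Pi^*(\mu,\nu)}\int \tilde c\,d\pi$. By Lemma \ref{lem:Pi*properties}, $\Pi^*(\mu,\nu)$ is weakly compact, so after passing to a subsequence $\pi_n \to \pi^*$ weakly for some $\pi^*\in \Pi^*(\mu,\nu)$. Weak lower semi-continuity of $\pi\mapsto \int \tilde c\,d\pi$ for a non-negative lsc integrand then gives
$$
\int_{\R\times\R} \tilde c\,d\pi^* \le \liminf_{n\to\infty} \int_{\R\times\R} \tilde c\,d\pi_n = \inf_{\pi\in\Pi^*(\mu,\nu)} \int_{\R\times\R}\tilde c\,d\pi,
$$
and since $\pi^*\in\Pi^*(\mu,\nu)$ the reverse inequality is immediate. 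Translating back through the constant shift yields the claimed minimality of $\pi^*$ for $c$.

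The only slightly subtle point is the weak lower semi-continuity for an lsc integrand that is unbounded above. This is classical on a Polish space: one writes $\tilde c = \sup_k \varphi_k$ for an increasing sequence of non-negative bounded continuous $\varphi_k$, applies weak convergence $\int \varphi_k \,d\pi_n \to \int \varphi_k\,d\pi^*$ for each $k$, and then passes to the limit in $k$ via monotone convergence. This is the standard scheme underlying the proof of Beiglb\"ock et al. \cite[Theorem 1]{BHLP13} to which the excerpt defers, so no ingredient beyond Lemma \ref{lem:Pi*properties} is required.
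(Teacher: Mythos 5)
Your proof is correct and follows the same route the paper intends: the paper explicitly omits the argument, stating that it follows from the convexity/compactness of $\Pi^*(\mu,\nu)$ established in Lemma \ref{lem:Pi*properties} together with the standard lower semi-continuity and compactness scheme of Beiglb\"ock et al.\ \cite[Theorem 1]{BHLP13}. Your reduction to a non-negative cost via the finite marginal shift and the monotone approximation by bounded continuous functions is exactly the standard implementation of that scheme.
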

\begin{proof}[Proof of Lemma \ref{lem:Pi*properties}]
    Let $\pi^1,\pi^2\in\Pi^*(\mu,\nu)$. Then, for $\alpha\in[0,1]$, we have that $\pi:=\alpha\pi^1+(1-\alpha)\pi^2$ is an element of $\Pi(\mu,\nu)$, $\pi_x=\alpha\pi^1_x+(1-\alpha)\pi^2_x$, and thus also $\overline{\pi_x}=\alpha\overline{\pi^1_x}+(1-\alpha)\overline{\pi^2_x}$ for $\mu$-a.e. $x\in\R$. Then using the convexity of $\vert\cdot\lvert$ it follows that
    $$
    \int_\R \lvert x-\overline{\pi_x}\lvert\mu(dx)\leq \alpha\int_\R \lvert x-\overline{\pi^1_x}\lvert\mu(dx)+(1-\alpha)\int_\R \lvert x-\overline{\pi^2_x}\lvert\mu(dx)= V(\mu,\nu)
    $$
    (recall \eqref{eq:WOT_l1}), and thus $\pi\in\Pi^*(\mu,\nu)$, which proves the convexity of $\Pi^*(\mu,\nu)$.

We now turn to compactness. Since $\Pi(\mu,\nu)$ is compact and $\Pi^*(\mu,\nu)\subseteq \Pi(\mu,\nu)$, it is enough to show that $\Pi^*(\mu,\nu)$ is closed. However, due to Corollary \ref{thm:coupling}, we have that any $\pi\in\Pi^*(\mu,\nu)$ is of the form \eqref{eq:optimal_pi}. Hence the claim follows if $\Pi_M(\eta,\chi),\Pi_{Sup}(\eta,\chi)$ and $\Pi_{Sub}(\eta,\chi)$ are all closed (provided they are non-empty) for some fixed $\eta,\chi\in\sP$.

The closedness of $\Pi_M(\eta,\chi)$ was proved by Beiglb\"ock et al. \cite{BHLP13} We now argue (by suitably modifying the arguments of Beiglb\"ock et al. \cite{BHLP13}) that $\Pi_{Sup}(\eta,\chi)$ (and thus, by symmetry, also $\Pi_{Sub}(\eta,\chi)$) is closed.

Fix $\eta\leq_{cd}\eta$ so that $\Pi_{Sup}(\eta,\chi)\neq\emptyset$. Now note that
$$
\pi\in\Pi_{Sup}(\eta,\chi) \iff \int_\R I_A(x)(y-x)\pi(dx,dy)\leq0\quad\forall \textrm{ Borel }A\subseteq \R.
$$
By standard approximation arguments we can replace $I_A$ by any non-negative, continuous and bounded $f:\R\to\R$. It follows that
$$
\Pi_{Sup}(\eta,\chi)=\bigcap_{f\in C_b(\R),~f\geq0} \mathcal{I}_f:= \bigcap_{f\in C_b(\R),~f\geq0}\left\{\pi\in\Pi(\mu,\nu):\int_\R f(x)(y-x)\pi(dx,dy)\leq 0\right\}.
$$
Since the sub-level sets of a continuous function are closed, using the continuity (wrt the weak convergence) of  $\pi\mapsto \int_\R f(x)(y-x)\pi(dx,dy)$ (see Beiglb\"ock et al. \cite[Lemma 2.2.]{BHLP13}), we conclude that each $\mathcal{I}_f$ (and thus also $\Pi_{Sup}(\eta,\chi)$) is closed.
\end{proof}
\begin{remark}\label{rem:generalC}
In fact one can prove Lemma \ref{lem:Pi*properties} in a more general setting. Fix $\mu,\nu\in\sP$, $\tilde h:\R\times\sP\to \R$, and consider
$$
\Pi^*_{\tilde h}(\mu,\nu):=\argmin_{\pi\in\Pi(\mu,\nu)}\int_\R \tilde h(x,\pi_x)\mu(dx).
$$

If $\tilde h$ is convex wrt the second variable, then using the same arguments as in Lemma \ref{lem:Pi*properties} we arrive to the convexity of  $\Pi^*_{\tilde h}(\mu,\nu)$. 

Furthermore, if in addition $\tilde h$ is lower semi-continuous (with respect to the product topology) and bounded from below (in particular, if it satisfies Condition (A) of Backhoff-Veraguas et al. \cite[Definition 2.7]{backhoff2019existence}), then $\pi\mapsto \int_\R \tilde h(x,\pi_x)\mu(dx)$ is lower semi-continuous (see Backhoff-Veraguas et al. \cite[Proposition 2.8]{backhoff2019existence}) ). Since the level sets of a lower semi-continuous function are closed, it follows that $\Pi^*_{\tilde h}(\mu,\nu)$ is a closed subset of $\Pi(\mu,\nu)$, and thus compact.

Note that the above applies to $(x,\eta)\mapsto \tilde h(x,\eta):=h(x-\overline\eta) $, where $h:\R\to\R_+$ is convex with $h(0)=0$ (as in section \ref{subsec:Lipschitz_map}).    
\end{remark}

Since each $\pi\in\Pi^*(\mu,\nu)$ is of the form \eqref{eq:optimal_pi}, it follows that
\begin{align}\label{eq:cotDecomposition}
    \inf_{\pi\in\Pi^*(\mu,\nu)}\int cd\pi=
    \inf_{\pi\in\Pi_{Sub}(\eta^-_{\mu,\nu},\chi^-_{\mu,\nu})}\int cd\pi+\inf_{\pi\in\Pi_{M}(\eta^0_{\mu,\nu},\chi^0_{\mu,\nu})}\int cd\pi+\inf_{\pi\in\Pi_{Sup}(\eta^+_{\mu,\nu},\chi^+_{\mu,\nu})}\int cd\pi.    
\end{align}
We further note that $(\eta^0_{\mu,\nu}+\eta^-_{\mu,\nu})\leq_{cd}(\chi^0_{\mu,\nu}+\chi^-_{\mu,\nu})$ and
\begin{equation}\label{eq:barrierSup}
\inf_{\pi\in\Pi_{Sup}(\eta^0_{\mu,\nu}+\eta^+_{\mu,\nu},\chi^0_{\mu,\nu}+\chi^+_{\mu,\nu})}\int cd\pi=\inf_{\pi\in\Pi_{M}(\eta^0_{\mu,\nu},\chi^0_{\mu,\nu})}\int cd\pi+\inf_{\pi\in\Pi_{Sup}(\eta^+_{\mu,\nu},\chi^+_{\mu,\nu})}\int cd\pi.
\end{equation}
Indeed, by construction we have that $x^+_{\mu,\nu}$ corresponds to the \textit{maximal barrier} for $(\eta^0_{\mu,\nu}+\eta^+_{\mu,\nu})\leq_{cd}(\chi^0_{\mu,\nu}+\chi^+_{\mu,\nu})$ in the sense of Nutz and Stebegg \cite[Proposition 3.2]{NutzStebegg.18}, and thus every $\pi\in\Pi_{Sup}(\eta^0_{\mu,\nu}+\eta^+_{\mu,\nu},\chi^0_{\mu,\nu}+\chi^+_{\mu,\nu})$ couples $\eta^0_{\mu,\nu}\leq_c\chi^0_{\mu,\nu}$ using a martingale (recall that the supports of both measures $\eta^0_{\mu,\nu}\leq_c\chi^0_{\mu,\nu}$ are contained in $[x^-_{\mu,\nu},x^+_{\mu,\nu}]$, while the supports of $\eta^+_{\mu,\nu}\leq_{cd}\chi^+_{\mu,\nu}$ are contained in $[x^+_{\mu,\nu},\infty)$). A symmetric argument shows that
\begin{equation}\label{eq:barrierSub}
\inf_{\pi\in\Pi_{Sub}(\eta^-_{\mu,\nu}+\eta^0_{\mu,\nu},\chi^-_{\mu,\nu}+\chi^0_{\mu,\nu})}\int cd\pi=\inf_{\pi\in\Pi_{Sub}(\eta^-_{\mu,\nu},\chi^-_{\mu,\nu})}\int cd\pi+\inf_{\pi\in\Pi_{M}(\eta^0_{\mu,\nu},\chi^0_{\mu,\nu})}\int cd\pi.
\end{equation}
In the next section we consider \eqref{eq:cotDecomposition} for some specific cost functions.
\subsection{Spence-Mirrlees costs}\label{sec:minimalVariance}
Nutz and Stebegg \cite{NutzStebegg.18} considered the supermartingale optimal tranport problems with cost functions $c$ that satisfy certain \textit{twist} conditions. A function $f:\mathbb{R}^2\to\R$ is called \textit{first-order Spence-Mirrlees} if
\begin{equation}\label{eq:superSM1}
f(x_2,\cdot)-f(x_1,\cdot)\textit{ is strictly increasing for all }x_1<x_2,
\end{equation}
and \textit{second-order Spence-Mirrlees} if
\begin{equation}\label{eq:superSM2}
f(x_2,\cdot)-f(x_1,\cdot)\textit{ is strictly convex for all }x_1<x_2.
\end{equation}
Then $f$ is the \textit{supermartingale Spence-Mirrlees} if $f$ is second-order Spence-Mirrlees and $(-f)$ is first-order Spence-Mirrlees.

Let $\Gamma\subset\R^2$ and consider $(x,y_1),(x,y_2),(x',y')\in\Gamma$ with $y_1<y_2$. Then $\Gamma$ is
\begin{itemize}
    \item[(i)] second-order left-monotone if $y'\notin(y_1,y_2)$ whenever $x<x'$,
    \item[(ii)] second-order right-monotone if $y'\notin(y_1,y_2)$ whenever $x>x'$.
\end{itemize}
Furthermore, let $(\Gamma,M)\subseteq\R^2\times\R$ and consider $(x_1,y_1),(x_2,y_2)$ with $x_1<x_2$. The pair $(\Gamma,M)$ is supermartingale (resp. submartingale)
\begin{itemize}
    \item[(i)] first-order left-monotone if $y_1\leq y_2$ whenever $x_2\notin M$ (resp. $x_1\notin M$),
    \item[(ii)] first-order right-monotone if $y_2\leq y_1$ whenever $x_1\notin M$ (resp. $x_2\notin M$).
\end{itemize}

Then we have the following
\begin{theorem}[Nutz and Stebegg \cite{NutzStebegg.18}]\label{thm:NS} Suppose $\eta,\chi\in\sP$ with $\eta\leq_{cd}\chi$. Then there exists (nondegenerate, in the sense of Nutz and Stebegg \cite[Definition 7.5]{NutzStebegg.18}) $(\Gamma^I,M^I)\subseteq \mathcal{B}(\R^2)\times\mathcal{B}(\R)$ that is supermartingale first-order right-monotone and second-order left-monotone (resp. $(\Gamma^D,M^D)\subseteq \mathcal{B}(\R^2)\times\mathcal{B}(\R)$ that is supermartingale first-order left-monotone and second-order right-monotone) and the unique coupling $\pi^I\in\Pi_{Sup}(\eta,\chi)$ (resp. $\pi^D\in\Pi_{Sup}(\eta,\chi)$), the so-called \textit{increasing} (resp. \textit{decreasing}) supermartingale coupling, such that
$$
\pi^I(\Gamma^I)=1\textrm{ and }\pi^I\lvert_{M^I\times\R}\textrm{ is a martingale}\quad \textrm{(resp. }\pi^I(\Gamma^I)=1\textrm{ and }\pi^I\lvert_{M^D\times\R}\textrm{ is a martingale).}
$$

Furthermore, $\pi^I$ (resp. $\pi^D$) is the unique optimizer of $\sup_{\pi\in\Pi_{Sup}(\eta,\chi)}\int fd\pi$ and $\inf_{\pi\in\Pi_{Sup}(\eta,\chi)}\int fd\pi$ if $f$ and $(-f)$ (resp. $(-f)$ and $f$) are supermartingale Spence-Mirrlees, respectively.    
\end{theorem}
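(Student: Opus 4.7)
The strategy follows the Beiglböck--Juillet template for canonical martingale couplings, adapted to the supermartingale setting via the barrier decomposition. The overall plan has three pieces: (a) construct the candidate couplings $\pi^I,\pi^D$ via an appropriate gluing of martingale and monotone pieces, (b) show via a monotonicity principle that optimizers of (sub/super)-Spence--Mirrlees functionals have supports enjoying the geometric properties stated, and (c) prove that these geometric properties uniquely determine the coupling, so the candidates from (a) coincide with the optimizers from (b).

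For construction I would first locate the martingale set $M$. Given $\eta\leq_{cd}\chi$, the function $k\mapsto C_\chi(k)-C_\eta(k)$ is non-negative and convex; the set where the \emph{greatest convex minorant on $\R_+$ touches zero} (equivalently, where mass is forced to be transported as a martingale, as in the barrier analysis used around \eqref{eq:barrierSup}--\eqref{eq:barrierSub}) gives a candidate $M=M^I$. Off $M$, the ``excess'' mass must drift strictly down, and there is a one-dimensional structure one can parametrize by potential levels. The increasing supermartingale coupling is then built by applying the left-curtain martingale coupling of Beiglböck--Juillet on the martingale pieces (over the irreducible intervals of the restricted problem) and gluing a Hoeffding--Fréchet-style monotone coupling on the complement, where the destination is obtained by peeling off mass of $\chi$ from the left. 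The decreasing coupling $\pi^D$ is constructed symmetrically, using the right-curtain coupling and peeling $\chi$ from the right. A separate check is that the resulting plan is in $\Pi_{Sup}(\eta,\chi)$, i.e., satisfies the barycentre inequality fiberwise; this follows by integrating against test functions over each piece.

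For the geometric characterization I would invoke a supermartingale monotonicity principle. Any optimizer $\pi$ of $\int f\,d\pi$ with $f$ supermartingale Spence--Mirrlees must be concentrated on a set $\Gamma$ such that no finite ``competitor'' rearrangement compatible with the supermartingale constraint can strictly improve the cost. A pairwise swap argument: given $(x,y_1),(x,y_2)\in\Gamma$ with $y_1<y_2$ and $(x',y')\in\Gamma$ with $x<x'$, consider shifting an infinitesimal mass from $\delta_{(x,y_1)}+\delta_{(x',y')}$ to an averaged plan that preserves marginals and the supermartingale inequality; strict second-order Spence--Mirrlees forces the inequality $y'\notin(y_1,y_2)$, which is second-order left-monotonicity. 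Similarly, the first-order part uses that $(-f)$ is first-order Spence--Mirrlees to rule out $y_1>y_2$ when $x_1<x_2$ and $x_2\notin M$. Lower semicontinuity and the compactness of $\Pi_{Sup}(\eta,\chi)$ (cf. Lemma \ref{lem:Pi*properties}) yield existence of optimizers to which this principle applies.

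Uniqueness, and the identification of the candidate from step (a) with the optimizer from step (b), is the final and most delicate step. The idea is that the combined geometric conditions (first-order right-monotone off $M^I$, second-order left-monotone everywhere, martingale on $M^I\times\R$) leave no freedom: on each irreducible martingale block the left-curtain coupling is the unique second-order left-monotone martingale plan (Beiglböck--Juillet), while on the complement the first-order right-monotone constraint combined with the marginals forces the unique antitone (Hoeffding--Fréchet type) arrangement. The main obstacle I anticipate is handling the interface between $M^I$ and its complement: showing that optimal mass cannot leak across the boundary in a way that breaks either monotonicity property, and showing that the ``nondegenerate'' choice of $(\Gamma^I,M^I)$ is canonical. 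This is typically handled by a careful analysis of potential functions $P_\eta,P_\chi$ at the contact points of their graphs, and by verifying that any admissible $(\Gamma,M)$ satisfying the monotonicity conditions must agree with $(\Gamma^I,M^I)$ up to $\pi$-null sets.
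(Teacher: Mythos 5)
This is a \emph{cited} theorem, not one the paper proves: Theorem~\ref{thm:NS} is stated (with attribution) as a result of Nutz and Stebegg \cite{NutzStebegg.18}, and the present paper supplies no proof for it. There is therefore no ``paper's own proof'' to compare your sketch against; the authors simply invoke it as background for Section~\ref{sec:cot}. (The nearest the paper comes to replaying this machinery is the proof of Lemma~\ref{lem:example}, which carries out a \emph{submartingale} analogue of the monotonicity-principle step and then defers to the corresponding arguments in \cite{NutzStebegg.18}, but that is a different statement.)

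That said, your outline does follow the right high-level skeleton from Nutz--Stebegg: locate the martingale/barrier set from the potential functions, build the increasing coupling by gluing a left-curtain-type martingale plan on the martingale blocks with a monotone strict-supermartingale transport off $M$, establish a supermartingale monotonicity principle via pairwise competitor swaps, and close with a uniqueness argument for plans satisfying the joint first- and second-order monotonicity conditions. Two cautions. First, the hard technical content in \cite{NutzStebegg.18}---the precise barrier decomposition (maximal barrier $\hat x$, irreducible components on each side, the shadow-style ``peeling'' that produces the target measures off $M$), the existence of a nondegenerate $(\Gamma,M)$, and the uniqueness proof that matches the geometrically characterized optimizer to the explicitly built coupling---is considerably more involved than your one-paragraph description suggests, particularly the interface analysis you flag as delicate. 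Second, a small but substantive slip: for $\pi^I$ the strict-supermartingale part is carried by the \emph{antitone} (decreasing) map, not a Hoeffding--Fr\'echet/comonotone one; you later say ``antitone (Hoeffding--Fr\'echet type)'' which conflates the two. The Hoeffding--Fr\'echet (quantile) coupling is what $\pi^D$ mimics on its strict points; $\pi^I$ uses the antitone arrangement there. Getting this orientation wrong would break the first-order right-monotonicity you need for $\pi^I$.
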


Theorem \ref{thm:NS} completely solves \eqref{eq:barrierSup} for supermartingale Spence-Mirrlees cost functions. Hence it remains to solve $\inf_{\pi\in\Pi_{Sub}(\eta^-_{\mu,\nu},\chi^-_{\mu,\nu})}\int cd\pi$. However, the solution depends on $c$ (within the class of supermartingale Spence-Mirrlees cost functions.)

Using the symmetric arguments to those of Nutz and Stebegg \cite{NutzStebegg.18}, one can show that, for $\eta\leq_{ci}\chi$, there exists the unique $\tilde \pi^I\in\Pi_{Sub}(\eta,\chi)$ (resp. $\tilde \pi^D\in\Pi_{Sub}(\eta,\chi)$) that is supported by $(\tilde\Gamma ^I,\tilde M^I)$ (resp. $(\tilde\Gamma ^D,\tilde M^D)$), which is submartingale first-order left-monotone and second-order left-monotone (resp. submartingale first-order right-monotone and second-order right-monotone). $\pi^I$ and $\tilde \pi^I$ (resp. $\pi^D$ and $\tilde \pi^D$) are similar in the sense that both couplings mimic the left-curtain (resp. right-curtain) martingale coupling (see Beiglb\"ock ad Juillet \cite{BeiglbockJuillet:16}) on their respective \textit{martingale} points, $M^I$ and $\tilde M^I$ (resp. $M^D$ and $\tilde M^D$). However, the behaviour of $\pi^I$ and $\tilde \pi^I$ (resp. $\pi^D$ and $\tilde \pi^D$) differ on strict supermartingale and strict submartingale points respectively; there $\pi^I$ (resp. $\pi^D$) mimics the \textit{antitone} (resp. \textit{quantile}) coupling, and thus is supported on a decreasing (resp. increasing) map, while $\tilde\pi^I$ (resp. $\tilde\pi^D$) is supported on an increasing (resp. decreasing) map,  mimicking the behaviour of the quantile (resp. antitone) coupling.

The above differences of $\pi^I,\pi^D,\tilde\pi^I,\tilde\pi^D$ are also reflected in the optimal transport problems for which these couplings appear as optimizers.

\begin{lemma}\label{lem:example}
Let $\eta,\chi\in\sP$ with $\eta\leq_{ci}\chi$. If $c$ (resp. $-c$) is supermartingale Spence-Mirrlees, then neither the decreasing submartingale coupling $\tilde\pi^D\in\Pi_{Sub}(\eta,\chi)$, nor the increasing submartingale coupling $\tilde\pi^I\in\Pi_{Sub}(\eta,\chi)$, maximizes
$$
\Pi_{Sub}(\eta,\chi)\ni \pi\mapsto \int cd\pi.
$$
In particular, an optimizer is submartingale first-order right-monotone and second order left-monotone (resp. submartingale first-order left-monotone and second-order right-monotone).
\end{lemma}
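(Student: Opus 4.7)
The plan is to rule out $\tilde\pi^D$ and $\tilde\pi^I$ one at a time by exhibiting small, explicit swaps that stay inside $\Pi_{Sub}(\eta,\chi)$ and strictly increase $\int c\, d\pi$. Existence of a maximizer of $\sup_{\pi\in\Pi_{Sub}(\eta,\chi)}\int c\,d\pi$ is guaranteed since $\Pi_{Sub}(\eta,\chi)$ is compact (this is one of the ingredients already used in the proof of Lemma \ref{lem:Pi*properties}), so it suffices to display, in a measurable region of positive mass, a competitor that beats each candidate.

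For $\tilde\pi^D$, which is second-order right-monotone, the strategy is to find on its martingale skeleton $\tilde M^D$ a four-point configuration with source points $x_1<x_2$ and target points $y_1<y_2<y_3<y_4$ such that $\tilde\pi^D$ sends $x_1\mapsto\{y_2,y_3\}$ and $x_2\mapsto\{y_1,y_4\}$ (the nested pattern forced by second-order right-monotonicity), the weights being determined by the martingale condition. Swapping to $x_1\mapsto\{y_1,y_4\}$ and $x_2\mapsto\{y_2,y_3\}$ with the analogous martingale weights is still a martingale on the same skeleton (hence still a submartingale), preserves both marginals, and strictly increases $\int c\,d\pi$ because $y\mapsto c(x_2,y)-c(x_1,y)$ is strictly convex by the second-order Spence-Mirrlees assumption. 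This delivers a competitor $\tilde\pi^D+\varepsilon\Delta\in\Pi_{Sub}(\eta,\chi)$ with strictly larger cost, so $\tilde\pi^D$ cannot be optimal.

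For $\tilde\pi^I$, which is submartingale first-order left-monotone, the relevant hypothesis is that $(-c)$ is first-order Spence-Mirrlees, i.e.\ $y\mapsto c(x_2,y)-c(x_1,y)$ is strictly decreasing for $x_1<x_2$. On the strict-submartingale region $\R\setminus\tilde M^I$, the left-monotone property forces supporting pairs $(x_i,y_i)$ with $x_1<x_2$ and $x_1\notin\tilde M^I$ to satisfy $y_1\le y_2$. Replacing such a pair of equal mass by the antitone arrangement $(x_1,y_2),(x_2,y_1)$ preserves both marginals and, by the strict first-order decrease, strictly increases $\int c\,d\pi$. To remain inside $\Pi_{Sub}(\eta,\chi)$, I would restrict the swap to a Borel set on which the conditional mean $\overline{(\tilde\pi^I)_x}$ strictly exceeds $x$ with a uniform margin, so that for $\varepsilon$ sufficiently small the perturbed coupling still satisfies $\overline{\pi_x}\ge x$ for $\eta$-a.e.\ $x$.

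The main obstacle will be the measurability/disintegration setup: constructing each swap as a signed measure with vanishing marginals, verifying that the submartingale inequality survives the perturbation, and carrying this out on a set of positive $\eta$-mass. I would borrow the competitor-construction technology of Nutz--Stebegg \cite{NutzStebegg.18} with the symmetric submartingale adaptation sketched in the paragraph preceding the lemma. Once both swaps are in place, the ``In particular'' clause follows: any optimizer must fail to be first-order left-monotone and must fail to be second-order right-monotone, so by the submartingale analogue of the Nutz--Stebegg classification it must be submartingale first-order right-monotone and second-order left-monotone (and, under $-c$ in place of $c$, the symmetric pair of monotonicity types).
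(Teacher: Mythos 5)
Your proposal inverts the logical order of the paper's proof, and this creates a gap. The paper proves the monotonicity characterization (the ``In particular'' clause) first and obtains the suboptimality of $\tilde\pi^D$ and $\tilde\pi^I$ as a consequence: it invokes the submartingale analogue of the Nutz--Stebegg monotonicity principle, asserting that an optimizer of $\sup_{\pi\in\Pi_{Sub}(\eta,\chi)}\int c\,d\pi$ concentrates on a nondegenerate pair $(\Gamma,M)$ that cannot be improved by finitely supported $(M_0,M_1)$-competitors, and then runs the Spence--Mirrlees two-point swap at the level of $\Gamma$ to conclude that $(\Gamma,M)$ is submartingale first-order right-monotone (from $-c$ being first-order Spence--Mirrlees) and second-order left-monotone (from $c$ being second-order Spence--Mirrlees). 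Since $\tilde\pi^D$ (first- and second-order right-monotone) and $\tilde\pi^I$ (first- and second-order left-monotone) carry incompatible monotonicity signatures, neither can be optimal. Working on the support set via the monotonicity principle sidesteps the need to convert a pointwise swap into an actual positive-mass perturbation of a measure.

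Your proposal instead constructs improving perturbations of $\tilde\pi^D$ and $\tilde\pi^I$ directly and then tries to recover the monotonicity classification afterward. Two problems. First, the swaps are exhibited only for isolated point masses; to build a genuine competitor $\tilde\pi^D+\varepsilon\Delta\in\Pi_{Sub}(\eta,\chi)$ you must establish that the nested four-point pattern occurs on a set of positive $\eta$-mass, and when $\eta$ is non-atomic you must diffuse the swap over small neighborhoods while preserving both marginals and the submartingale constraint. You acknowledge this as ``the main obstacle'' but do not resolve it, and it is exactly the difficulty the monotonicity-principle formulation is designed to avoid. Second, the final inference --- that the optimizer ``must fail to be first-order left-monotone and must fail to be second-order right-monotone,'' hence must be first-order right-monotone and second-order left-monotone --- is not valid. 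Ruling out two particular couplings does not determine the optimizer's monotonicity: there is no dichotomy confining arbitrary submartingale couplings to one of four monotonicity types. Obtaining the classification still requires the Spence--Mirrlees swap argument at the level of the optimizer's support, as the paper does, which makes your direct perturbations redundant. I would recommend following the paper's order: prove the monotonicity of the optimizer via the monotonicity principle, then read off that $\tilde\pi^D$ and $\tilde\pi^I$ do not match.
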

Using Theorem \ref{thm:NS} and Lemma \ref{lem:example} we can characterize the optimzers of \eqref{eq:cotDecomposition} for the costs satisfying supermartingale Spence-Mirrlees conditions:
\begin{corollary}\label{cor:optimalSM}
 Let $\mu,\nu\in\sP$, and suppose that $\pi^*$ is an optimizer of \eqref{eq:cotDecomposition}. Then
 \begin{itemize}
     \item[(i)] If $c$ is supermartingale Spence-Mirrlees, then $\pi^*\lvert_{[x^-_{\mu,\nu},\infty)\times \R}$ is the (unique) increasing supermartingale coupling $\pi^I\in\Pi_{Sup}(\eta^0_{\mu,\nu}+\eta^+_{\mu,\nu},\chi^0_{\mu,\nu}+\chi^+_{\mu,\nu})$, while $\pi^*\lvert_{(-\infty,x^-_{\mu,\nu}]\times \R}$ is submartingale first-order right-monotone and second-order left-monotone.
     \item[(ii)] If $(-c)$ is supermartingale Spence-Mirrlees, then $\pi^*\lvert_{[x^-_{\mu,\nu},\infty)\times \R}$ is the (unique) decreasing supermartingale coupling $\pi^D\in\Pi_{Sup}(\eta^0_{\mu,\nu}+\eta^+_{\mu,\nu},\chi^0_{\mu,\nu}+\chi^+_{\mu,\nu})$, while $\pi^*\lvert_{(-\infty,x^-_{\mu,\nu}]\times \R}$ is submartingale first-order left-monotone and second-order right-monotone.
 \end{itemize}

\end{corollary}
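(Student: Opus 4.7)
The plan is to reduce the constrained transport problem on $\Pi^*(\mu,\nu)$ to three independent classical problems---submartingale, martingale, and supermartingale---on the canonical subdivisions $(\eta^\pm_{\mu,\nu}, \chi^\pm_{\mu,\nu})$ and $(\eta^0_{\mu,\nu},\chi^0_{\mu,\nu})$, using \eqref{eq:cotDecomposition}, and then to invoke Theorem \ref{thm:NS} and Lemma \ref{lem:example} componentwise.

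More concretely, by \eqref{eq:cotDecomposition} any minimizer $\pi^*$ of $\inf_{\pi\in\Pi^*(\mu,\nu)}\int c\,d\pi$ splits (uniquely up to mass apportionment at the boundary points $x^\pm_{\mu,\nu}$) into three independent optimizers, one for each of the sub, martingale, and super components between the corresponding marginals. I would then use the maximal-barrier identity \eqref{eq:barrierSup} to fuse the martingale and supermartingale subproblems into a single supermartingale transport problem on $\Pi_{Sup}(\eta^0_{\mu,\nu}+\eta^+_{\mu,\nu},\chi^0_{\mu,\nu}+\chi^+_{\mu,\nu})$. Under the hypothesis of part (i) (respectively (ii)), Theorem \ref{thm:NS} yields a unique optimizer of this fused problem, namely $\pi^I$ (respectively $\pi^D$). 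Because $x^+_{\mu,\nu}$ is a maximal barrier (which is precisely what underlies \eqref{eq:barrierSup}), this coupling automatically restricts to a martingale transport between $\eta^0_{\mu,\nu}$ and $\chi^0_{\mu,\nu}$ on $[x^-_{\mu,\nu},x^+_{\mu,\nu}]\times[x^-_{\mu,\nu},x^+_{\mu,\nu}]$ and to a supermartingale between $\eta^+_{\mu,\nu}$ and $\chi^+_{\mu,\nu}$ on $(x^+_{\mu,\nu},\infty)\times\R$, so the identification $\pi^*\lvert_{[x^-_{\mu,\nu},\infty)\times\R}=\pi^I$ (respectively $\pi^D$) is consistent with the componentwise decomposition of $\pi^*$.

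For the remaining submartingale piece, $\pi^*\lvert_{(-\infty,x^-_{\mu,\nu}]\times\R}$ is an optimizer of $\int c\,d\pi$ over $\Pi_{Sub}(\eta^-_{\mu,\nu},\chi^-_{\mu,\nu})$, and since $\eta^-_{\mu,\nu}\leq_{ci}\chi^-_{\mu,\nu}$ Lemma \ref{lem:example} applies (with $c$ or $-c$ playing the Spence--Mirrlees role as appropriate): this directly forces the first- and second-order monotonicity structure asserted in (i) and (ii). No finer information is claimed, since uniqueness of such an optimizer on the submartingale side is not available in general.

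The genuine obstacle is not analytic but one of sign-bookkeeping: one must carefully align the orientation conventions (maximization versus minimization, $c$ versus $-c$, ``increasing'' versus ``decreasing'') between Theorem \ref{thm:NS} and Lemma \ref{lem:example}, so that the supermartingale coupling selected on $[x^-_{\mu,\nu},\infty)$ and the monotonicity class of the submartingale coupling on $(-\infty,x^-_{\mu,\nu}]$ fit the statement of the corollary. Once these orientations are fixed consistently, \eqref{eq:cotDecomposition}, Theorem \ref{thm:NS}, and Lemma \ref{lem:example} assemble directly into the claim.
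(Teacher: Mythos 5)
The paper gives no explicit proof of this corollary: the sentence preceding it states that the result follows from Theorem \ref{thm:NS} and Lemma \ref{lem:example}, and the proof environment that follows is for Lemma \ref{lem:example}. Your structural plan---split via \eqref{eq:cotDecomposition}, fuse the martingale and supermartingale blocks through the maximal-barrier identity \eqref{eq:barrierSup}, then invoke Theorem \ref{thm:NS} on the fused supermartingale block and Lemma \ref{lem:example} on the submartingale block---is precisely the intended argument.

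However, the issue you dismiss as mere ``sign-bookkeeping'' is exactly where the argument needs care, and you do not actually carry it out. Read literally, \eqref{eq:cotDecomposition} is an infimum. Theorem \ref{thm:NS} says that $\pi^I$ is the unique optimizer of $\sup_{\Pi_{Sup}}\int f\,d\pi$ when $f$ is supermartingale Spence--Mirrlees, and the unique optimizer of $\inf_{\Pi_{Sup}}\int f\,d\pi$ when $(-f)$ is (with $\pi^D$ taking the complementary role). So if $\pi^*$ really minimized \eqref{eq:cotDecomposition} and $c$ were supermartingale Spence--Mirrlees, the supermartingale block would be $\pi^D$, not $\pi^I$ as asserted in part (i). Likewise, Lemma \ref{lem:example} as stated characterizes the \emph{maximizer} over $\Pi_{Sub}$, not the minimizer. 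The conclusions of Corollary \ref{cor:optimalSM} only line up once one reads ``optimizer of \eqref{eq:cotDecomposition}'' as the maximizer of $\sup_{\pi\in\Pi^*(\mu,\nu)}\int c\,d\pi$, equivalently the analogue of \eqref{eq:cotDecomposition} with $\inf$ replaced by $\sup$ throughout (a reading the paper itself uses later in the proof of Proposition \ref{prop:cotCOV}, where it writes ``solving \eqref{eq:cotDecomposition} with $c=\pm xy$'' and passes immediately to the corresponding $\sup$ problems). Your proposal silently adopts this supremum reading when it declares ``Theorem \ref{thm:NS} yields $\pi^I$ under the hypothesis of (i)'', but it never notices or resolves the clash with the infimum in \eqref{eq:cotDecomposition}. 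Without flagging this explicitly, a reader checking your argument against Theorem \ref{thm:NS} would conclude part (i) should name $\pi^D$, so the orientation-fixing step you defer is not bookkeeping but the substance of the proof.
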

\begin{proof}{Proof of Lemma \ref{lem:example}}
    Let $(I_k)_{k\geq -1}$ be the irreducible components of $\eta\leq_{ci}\chi$ in the sense of Nutz and Stebegg \cite[Proposition 3.4]{NutzStebegg.18} (but with necessary adjustments required for the submartingale setting). In particular, $I_0:=(-\infty,x^*)$ (where $x^*:=\inf\{k\in\R:C_\eta(k)=C_\chi(k)\})$ is the maximal barrier), $(I_k)_{k=1,...,N}$ (with $N\leq\infty$) are the open components (i.e., disjoint open intervals) of $(x^*,\infty)\cap\{C_\eta<C_\chi\}$, and $I_{-1}=\R\setminus\bigcup_{k\geq 0}I_k$. Let $\mu_k=\mu\lvert_{I_k}$ for $-1\leq k\leq N$. Then there exists the unique decomposition $\chi=\sum_{k\geq -1}\chi_k$, s.t. 
    $$
    \mu_{-1}=\nu_{-1},\quad \eta_0\leq_{ci}\chi_0,\quad \eta_k\leq_c\chi_k\textrm{ for all }k\geq 1.
    $$
    Let $(J_k)_{k\geq 0}$ be such that each $J_k$ is $I_k$, together with its endpoints in the case $\chi_k$ has atoms at these endpoints. Any $\pi\in\Pi_{Sub}(\eta,\chi)$ is a sum of couplings of $(\eta_k,\chi_k)$.

    Define $\Sigma:=(I_{-1}\times I_{-1})\bigcup_{k\geq 0}(I_k\times J_k)$. Let $(\Gamma,M)\subseteq \mathcal{B}(\R^2)\times\R$ be nondegenerate, where $\Gamma\subseteq\Sigma$, $M:=M_0\cup M_1$ with Borel sets $M_0\subseteq I_0$ and $M_1\cup_{k\neq 0} I_k$.

    For any finite measures $\pi,\pi'$ on $\R^2$ (with finite first moment) with the same first marginal $\pi^1$, we call $\pi'$ a $(M_0,M_1)$-competitor of $\pi$ if
    \begin{equation}\label{eq:competitor}
\overline{\pi'_x}\geq \overline{\pi_x}\textrm{ for $\pi^1$-a.e. $x\in M_0$,}\quad\textrm{and}\quad\overline{\pi'_x}=\overline{\pi_x}\textrm{ for $\pi^1$-a.e. $x\in M_1$.}
    \end{equation}
    
    Now suppose that, in the case $\pi$ is finitely supported on $\Gamma$, $\int fd\pi\geq \int fd\pi'$ for any $(M_0,M_1)$-competitor  $\pi'$ (of $\pi$) that is concentrated on $\Sigma$ (here $f:\R\to\R_+$ is any Borel function). This is precisely a submratingale version of Nutz and Stebegg \cite[Theorem 5.2]{}.

    Now consider $(x_1,y_1),(x_2,y_2)\in\Gamma$ with $x_1<x_2$, and define
    $$
    \pi:=\frac{1}{2}\left\{\delta_{(x_1,y_1)+\delta_{(x_2,y_2)}}\right\},\quad \pi':=\frac{1}{2}\left\{\delta_{(x_1,y_2)+\delta_{(x_2,y_1)}}\right\}.
    $$
    
    Suppose that $y_2<y_1$, so that
    $$
\overline{\pi_{x_1}}>\overline{\pi'_{x_1}}\quad\textrm{and}\quad \overline{\pi_{x_2}}<\overline{\pi'_{x_2}}.
    $$

    Suppose that $x_1\notin M$ (so that it does not affect the definition of a competitor, see \eqref{eq:competitor}). 

    Suppose that $x_2\in M_1$, so that $x_2\in I_k$ for some $k\neq 0$. Then $y_2\in J_k$. On the other hand, since $x_1\notin M$, we must have that $x_1\in I_0$ and thus also $y_1\in J_0$. But $J_k$ is located to the right of $J_0$. It follows that $y_1\leq y_2$.
    
    Now suppose that $x_2\notin M_1$. Then $x_2\in I_0$, and it follows that $\pi'$ is a $(M_0,M_1)$-competitor of $\pi$. Furthermore, $x_i\in I_0$ and thus $y_i\in J_0$, for $i=1,2$, and thus $\pi'$ is supported on $\Sigma$. It follows that 
    $$
0\leq 2\left (\int cd\pi- \int cd\pi'\right)=c(x_2,y_2)-c(x_1,y_2)-(c(x_2,y_1)-c(x_1,y_1)).
    $$
    But the right-hand side of the equality is strictly negative if $c$ first-order Spence-Mirrlees, and thus we again conlcude that $y_1\leq y_2$. 

    Combining both cases we conclude that $(\Gamma,M)$ is submartingale first-order left-monotone, whenever $c$ is first-order Spence-Mirrlees.
    
    Now suppose that $(-c)$ is first-order Spence-Mirrlees, $y_1<y_2$, and $x_2\notin M$. Then $x_1< x_2<x^*$ and thus both $x_1,x_2\in I_0$, and also both $y_1,y_2\in J_0$. Furthermore, since $y_1<y_2$, we have that $\overline{\pi_{x_1}}<\overline{\pi'_{x_1}}$, and thus $\pi'$ is again a $(M_0,M_1)$-competitor of $\pi$. It follows that
    $$
0\leq 2\left (\int cd\pi- \int cd\pi'\right)=c(x_1,y_1)-c(x_2,y_1)-(c(x_1,y_2)-c(x_2,y_2)).
    $$
    But the right-hand side is strictly negative since $(-c)$ is first order Spence-Mirrlees. We conclude that $(\Gamma,M)$ is submartingale first-order right-monotone, whenever $(-c)$ is first-order Spence-Mirrlees.

    Furthermore, we can follow verbatim the proof of Nutz and Stebegg \cite[Proposition 7.8 (iii)-(iv)]{NutzStebegg.18} and conclude that $(\Gamma,M)$ is second-order left-monotone (resp. right-monotone), whenever $c$ (resp. $(-c)$) is second-order Spence-Mirrlees.

    The above (together with a submartingale version of Nutz and Stebegg \cite[Theorem 5.2]{NutzStebegg.18}) proves a submartingale version of Nutz and Stebegg \cite[Proposition 7.8]{NutzStebegg.18}. Then following the lines of the first part of the proof Nutz and Stebegg \cite[Corollary 9.4]{NutzStebegg.18} we conclude that an optimizer of $\sup_{\pi\in\Pi_{Sub}(\eta,\chi)}\int cd\pi$ is submartingale first-order right-monotone and second-order left-monotone (resp. submartingale first-order left-monotone and second-order right-monotone), whenever $c$ (resp. $(-c)$) is supermartingale Spence-Mirrlees.
\end{proof}
\begin{remark}
    In general there are more than one submartingale coupling of $\mu\leq_{ci}\nu$, that is submartingale first-order right-monotone and second-order left-monotone, or submartingale first-order left-monotone and second-order right-monotone. See Nutz and Stebegg \cite[Section 10]{NutzStebegg.18} for an equivalent statement in the supermartingale setting. However, one can modify the increasing and decreasing submartingale couplings $\tilde\pi^I,\tilde\pi^D$ on their respective martingale points to obtain couplings with aforamentioned monotonicity properties.

    For example, let $\tilde M^D$ be the martingale points of $\tilde\pi^D$, and let $\nu^{\tilde M^D}$ be the second marginal of $\tilde\pi^D\lvert_{\tilde M^D\times \R}$. Then if $\tilde\pi^{lc,\tilde M^d}$ denote the left-curtain martingale coupling of $\mu\lvert_{\tilde M^D}\leq_c\nu^{\tilde M^D}$ (which is second-order left-monotone), it follows that $\pi:=\tilde\pi^D\lvert_{\R\setminus \tilde M^D}+\tilde\pi^{lc,\tilde M^d}$ is submartingale first-order right-monotone and second-order left-monotone.

    By modifying the increasing submartingale coupling $\tilde\pi^I$ on its martingale points we can similarly obtain a submartingale coupling that is submartingale first-order left-monotone and second-order right-monotone.

The question whether these couplings are optimizers of $\int cd\pi$ for some class of costs functions $c$ (and thus canonical), we leave for the future research.
\end{remark}

While we were not able to explicitly solve \eqref{eq:cotDecomposition} for supermartingale Spence-Mirrlees costs, we can give a positive answer in the case we relax the second-order Spence-Mirrlees condition. This leads us to the cost function $(x,y)\mapsto c(x,y)=\pm(x-y)^2$.

For marginals $\mu,\nu\in\sP$, we have that
$$
\int_{\R\times\R}(x-y)^2\pi(dx,dy)=\int_\R x^2\mu(dx)+\int_{\R\times\R}-xy\pi(dx,dy)+\int_\R y^2\nu(dy),\quad\textrm{for all }\pi\in\Pi(\mu,\nu),
$$
and therefore
$$
\argmin_{\pi\in\Pi^*(\mu,\nu)}\int_{\R\times\R}(x-y)^2\pi(dx,dy)=\argmax_{\pi\in\Pi^*(\mu,\nu)}\int_{\R\times\R}xy\pi(dx,dy),$$
$$\argmax_{\pi\in\Pi^*(\mu,\nu)}\int_{\R\times\R}(x-y)^2\pi(dx,dy)=\argmin_{\pi\in\Pi^*(\mu,\nu)}\int_{\R\times\R}xy\pi(dx,dy).
$$
The above corresponds to the set of couplings in $\Pi^*(\mu,\nu)$ that produce the largest and smallest correlation of random variables $X\sim\mu$ and $Y\sim\nu$.
\begin{proposition}\label{prop:cotCOV}
Let $\mu,\nu\in\sP$. Let $\tilde\pi^D,\tilde\pi^I\in\Pi_{Sub}(\eta^-_{\mu,\nu},\chi^-_{\mu,\nu})$ be the decraesing and increasing submartingale couplings, respectively. Let $\pi^D,\pi^I\in\Pi_{Sup}(\eta^+_{\mu,\nu},\chi^+_{\mu,\nu})$ be the decraesing and increasing supermartingale couplings, respectively. Let $\pi^M\in\Pi_M(\eta^0_{\mu,\nu},\chi^0_{\mu,\nu})$ be an arbitrary martingale coupling.

Define
$$
\pi^{min}:=\tilde\pi^D+\pi^M+\pi^I\quad\textrm{and}\quad\pi^{max}:=\tilde\pi^I+\pi^M+\pi^D.
$$
Then $\pi^{min}$ and $\pi^{max}$ produce (among the elements of $\Pi^*(\mu,\nu)$) the smallest and largest covariance of $X\sim\mu$ and $Y\sim\nu$, respectively.
\end{proposition}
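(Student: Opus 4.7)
The plan is to convert the covariance criterion into $\pi\mapsto\int xy\,d\pi$ and exploit the three-part decomposition of $\Pi^*(\mu,\nu)$. Since $\mu$ and $\nu$ are fixed, $\mathrm{Cov}(X,Y)=\int xy\,d\pi-\overline{\mu}\,\overline{\nu}$ for any $\pi\in\Pi(\mu,\nu)$, so it suffices to extremize $\pi\mapsto\int xy\,d\pi$ on $\Pi^*(\mu,\nu)$. By \eqref{eq:cotDecomposition} this integral splits as
$$
\int xy\,d\pi \;=\; \int xy\,d\pi^- + \int xy\,d\pi^0 + \int xy\,d\pi^+,
$$
with independent optimization over $\Pi_{Sub}(\eta^-_{\mu,\nu},\chi^-_{\mu,\nu})$, $\Pi_M(\eta^0_{\mu,\nu},\chi^0_{\mu,\nu})$ and $\Pi_{Sup}(\eta^+_{\mu,\nu},\chi^+_{\mu,\nu})$. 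On the middle piece the martingale constraint $\overline{\pi^0_x}=x$ gives $\int xy\,d\pi^0=\int x^2\,\eta^0_{\mu,\nu}(dx)$, a constant independent of $\pi^0$; this is precisely why $\pi^M$ may be taken arbitrary in $\pi^{min}$ and $\pi^{max}$.

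The remaining task is to identify the optimizers on the two tails, which I would do by approximating $c(x,y)=\pm xy$ by strictly Spence-Mirrlees costs to make Theorem~\ref{thm:NS} (and its submartingale analogue) applicable. The cost $c(x,y)=-xy$ has $-c=xy$ strictly first-order Spence-Mirrlees but is only affine in $y$, hence not strictly second-order. I would regularize by
$$
c_\epsilon(x,y):=-xy+\epsilon\,xy^2,\qquad \epsilon>0.
$$
A direct check gives $c_\epsilon(x_2,y)-c_\epsilon(x_1,y)=(x_2-x_1)(-y+\epsilon y^2)$, strictly convex in $y$ for $x_2>x_1$, and $-c_\epsilon(x_2,y)+c_\epsilon(x_1,y)=(x_2-x_1)(y-\epsilon y^2)$, strictly increasing in $y$ on $\{y<1/(2\epsilon)\}$. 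After truncating $\mu,\nu$ to a common bounded interval, $c_\epsilon$ becomes supermartingale Spence-Mirrlees for $\epsilon$ small, so by Theorem~\ref{thm:NS} the coupling $\pi^I$ is the unique maximizer and $\pi^D$ the unique minimizer of $\pi\mapsto\int c_\epsilon\,d\pi$ on $\Pi_{Sup}(\eta^+_{\mu,\nu},\chi^+_{\mu,\nu})$. Letting $\epsilon\downarrow 0$ with the uniform estimate $|\int c_\epsilon\,d\pi-\int(-xy)d\pi|\leq \epsilon\int|xy^2|\,d\pi$ then yields that $\pi^I$ minimizes and $\pi^D$ maximizes $\int xy\,d\pi$ over $\Pi_{Sup}$. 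For the submartingale tail the mirror choice $c_\epsilon(x,y)=xy+\epsilon xy^2$ is strictly first-order and strictly second-order Spence-Mirrlees, and the submartingale analogue of Nutz--Stebegg (sketched in the paragraph preceding Lemma~\ref{lem:example}) selects $\tilde\pi^I$ and $\tilde\pi^D$ as the maximizer and minimizer, respectively, of $\int c_\epsilon\,d\pi$; passing to the limit gives the same conclusion for $\int xy\,d\pi$. Assembling the three pieces produces $\pi^{max}$ and $\pi^{min}$ as claimed.

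The principal technical obstacle is the limit $\epsilon\downarrow 0$ in the absence of compact support of $\mu,\nu$, since the regularizing term $\epsilon xy^2$ is only uniformly small over $\pi\in\Pi_{Sup}\cup\Pi_{Sub}$ when enough moments of the marginals are controlled. The cleanest remedy is to first truncate the marginals at level $N$, run the argument above on the resulting compactly supported marginals, and then transfer the optimality back to the original problem by using the weak stability of the canonical couplings $\pi^I,\pi^D,\tilde\pi^I,\tilde\pi^D$ as $N\to\infty$. A secondary point is that the submartingale analogue of Nutz--Stebegg's uniqueness theorem is only sketched in the paper; making it explicit for the cost class $xy+\epsilon xy^2$ is a routine adaptation of their arguments but has to be written down carefully.
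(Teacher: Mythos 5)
Your overall skeleton — reduce covariance to $\int xy\,d\pi$, split via \eqref{eq:cotDecomposition}, observe that the martingale piece contributes the constant $\int x^2\,\eta^0_{\mu,\nu}(dx)$, and then identify the two tail optimizers via Nutz--Stebegg-type monotonicity — matches the paper. Where you diverge is the handling of the degeneracy of $c(x,y)=\pm xy$: because the cost is affine in $y$, the strict second-order Spence--Mirrlees condition fails, and you attempt to repair this by regularizing with $\epsilon xy^2$, truncating the marginals, and passing to a double limit.

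The paper sidesteps this entirely. Nutz and Stebegg's Definition~9.1 introduces a \emph{relaxed} supermartingale Spence--Mirrlees class that drops strictness of the second-order condition (requiring only convexity, not strict convexity, of $y\mapsto f(x_2,y)-f(x_1,y)$), and their Corollary~9.4 is stated precisely for this relaxed class. The affine cost $c(x,y)=-xy$ sits squarely inside it, so $\pi^I$ (resp.\ $\pi^D$) is directly identified as the optimizer over $\Pi_{Sup}(\eta^+_{\mu,\nu},\chi^+_{\mu,\nu})$ with no regularization, no truncation, and no limiting argument. The submartingale tail is handled the same way via Lemma~\ref{lem:example}, whose whole purpose is to supply the submartingale analogue for exactly this cost. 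By not invoking the relaxed notion, your route incurs real costs: the regularized $c_\epsilon(x,y)=-xy+\epsilon xy^2$ is only first-order Spence--Mirrlees on the bounded strip $y<1/(2\epsilon)$, forcing the truncation; and the transfer step ``weak stability of the canonical couplings $\pi^I,\pi^D,\tilde\pi^I,\tilde\pi^D$ as $N\to\infty$'' that you flag as the principal obstacle is in fact an unproved claim — continuity of the value and, crucially, convergence of the \emph{unique} optimizer under marginal truncation is not established in the paper nor, for the supermartingale/submartingale curtain couplings, a straightforward citation. Since the relaxed Spence--Mirrlees machinery renders the regularization unnecessary, I would replace the $\epsilon$- and $N$-limits wholesale with a direct appeal to Nutz--Stebegg Corollary~9.4 for the relaxed class, as the paper does.

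Two minor remarks. First, your statement that ``the submartingale analogue of Nutz--Stebegg's uniqueness theorem is only sketched'' is correct, but the paper does invest in Lemma~\ref{lem:example} specifically to discharge this, so it is available as a citable ingredient rather than a gap of your own. Second, your sign bookkeeping ($\pi^I$ minimizing and $\pi^D$ maximizing $\int xy$ over $\Pi_{Sup}$; $\tilde\pi^D$ minimizing and $\tilde\pi^I$ maximizing over $\Pi_{Sub}$) is consistent with the paper's assignment of $\pi^{min}=\tilde\pi^D+\pi^M+\pi^I$ and $\pi^{max}=\tilde\pi^I+\pi^M+\pi^D$, so the conclusion you would reach is the correct one.
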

\begin{proof}
Since, for any $\eta,\chi\in\sM$ with $\eta\leq_c\chi$,
$$
\int_{\R\times\R}xy\pi(dx,dy)=\int_\R x^2\eta(dx),\quad\textrm{for all }\pi\in\Pi_M(\eta,\chi),
$$
we have that
$$
\argmax_{\pi\in\Pi_M(\eta^0_{\mu,\nu},\chi^0_{\mu,\nu})}\int_{\R^2}xy\pi(dx,dy)=\argmin_{\pi\in\Pi_M(\eta^0_{\mu,\nu},\chi^0_{\mu,\nu})}\int_{\R^2}xy\pi(dx,dy)=\Pi_M(\eta^0_{\mu,\nu},\chi^0_{\mu,\nu}).
$$
Therefore, solving \eqref{eq:cotDecomposition} with $c(x,y)=\pm xy$ is equivalent to (separately) solving
$$
\sup_{\pi\in\Pi_{Sub}(\eta^-_{\mu,\nu},\chi^-_{\mu,\nu})}\int_{\R\times\R}c(x,y)\pi(dx,dy)\quad\textrm{and}\quad\sup_{\pi\in\Pi_{Sup}(\eta^+_{\mu,\nu},\chi^+_{\mu,\nu})}\int_{\R\times\R}c(x,y)\pi(dx,dy).
$$

Let $c(x,y)=-xy$. Then, for $x_1<x_2$, we have that 
$$
y\mapsto -c(x_2,y)-(-c(x_1,y))=(x_2-x_1)y
$$
is strictly increasing (and thus $(-c)$ is first-order Spence-Mirrlees, see \eqref{eq:superSM1}) and linear (and thus both $c$ and $(-c)$ fail to be second-order Spence-Mirrlees, see \eqref{eq:superSM2}; but they are still convex). In particular $c$ is \textit{relaxed} supermartingale Spence-Mirrlees (in the sense of Nutz and Stebegg \cite[Definition 9.1]{NutzStebegg.18}), and thus by Nutz and Stebegg \cite[Corollary 9.4]{NutzStebegg.18} we have that 
$$
-\inf_{\pi\in\Pi_{Sup}(\eta^+_{\mu,\nu},\chi^+_{\mu,\nu})}\int_{\R^2}xy\pi(dx,dy)=\sup_{\pi\in\Pi_{Sup}(\eta^+_{\mu,\nu},\chi^+_{\mu,\nu})}\int_{\R^2}-xy\pi(dx,dy)=-\int_{\R^2}xy\pi^I(dx,dy).
$$
On the other hand, by Lemma \ref{lem:example} and its proof, we have that any maximizer of
$$
\sup_{\pi\in\Pi_{Sub}(\eta^-_{\mu,\nu},\chi^-_{\mu,\nu})}\int_{\R^2}-xy\pi(dx,dy)
$$
is submartingale first-order right-monotone. Since $\tilde \pi^D\in\Pi_{Sub}(\eta^-_{\mu,\nu},\chi^-_{\mu,\nu})$ is submartingale first-order right-monotone, the submartingale version of Nutz and Stebegg \cite[Corollary 9.4]{NutzStebegg.18} then gives that
$$
-\inf_{\pi\in\Pi_{Sub}(\eta^-_{\mu,\nu},\chi^-_{\mu,\nu})}\int_{\R^2}xy\pi(dx,dy)=\sup_{\pi\in\Pi_{Sub}(\eta^-_{\mu,\nu},\chi^-_{\mu,\nu})}\int_{\R^2}-xy\pi(dx,dy)=-\int_{\R^2}xy\tilde\pi^D(dx,dy).
$$

Combining all three optimization problems we conclude that
$$
\inf_{\pi\in\Pi^*(\mu,\nu)}\int_{\R^2}xy\pi(dx,dy)=\int_{\R^2}xy\pi^{min}(dx,dy).
$$
Using symmetric arguments we obtain that
$$
\sup_{\pi\in\Pi^*(\mu,\nu)}\int_{\R^2}xy\pi(dx,dy)=\int_{\R^2}xy\pi^{max}(dx,dy).$$ 
\end{proof}

\begin{remark}\label{rem:genralCosts}
One could investigate \eqref{eq:cotDecomposition} beyond Spence-Mirrlees costs functions. For example, consider $c(x,y)=\pm\lvert x-y \lvert$. In the martingale setting when $\mu\leq_c\nu$,
$$
\sup_{\pi\in\Pi_M(\mu,\nu)}\int cd\pi
$$
was studied by Hobson and Neuberger \cite{HobsonNeuberger}, and Hobson and Klimmek \cite{hobson2015robust}. However, the supermartingale (and thus also submartingale) version of their results are not yet present in the literature.
\end{remark}
\section{Generalized shadow measure}\label{sec:generalShadow}
Fix $\mu,\nu\in\sM$ with $\mu(\R)\leq \nu(\R)$, and define 
$$
\mathcal T_{\mu,\nu}:=\{\theta\in\sM:\theta(\R)=\mu(\R),\ \theta\leq\nu\}.
$$
Then $\mathcal T_{\mu,\nu}$ is the set of all target measures for $\mu$ in $\nu$. It is easy to see that $\mathcal T_{\mu,\nu}\neq\emptyset$. Indeed, $(\mu(\R)/\nu(\R))\nu\in \mathcal T_{\mu,\nu}$. Furthermore, if $\mu=(\mu_1+\mu_2)$ for some $\mu_1,\mu_2\in\sM$, then note that, for any $\theta\in \mathcal T_{\mu_1,\nu}$, we have that $\mathcal T_{\mu_2,\nu-\theta}\neq\emptyset$.

In the cases when $\mu\leq_{pc}\nu$ and $\mu\leq_{pcd}\nu$, Beiglb\"ock et al. \cite{beiglbock2022potential} and Bayraktar et al. \cite{BayDengNorgilas}, respectively, investigated the so-called \textit{shadow} measure that arises as a second (distributional) derivative of a convex function $k\mapsto P_\nu(k)-(P_\nu-P_\mu)^c(k)$, where $f^c$ denotes the largest convex minorant (i.e., convex hull) of $f:\R\to\R$. Here we study this measure for arbitrary $\mu$ and $\nu$, and establish an interesting connection to the optimal transport with weak $L^1$ costs (recall Section \ref{sec:wot}).

Define 
$$
p_{\mu,\nu}=\sup_{k\in\R}\{P_\mu(k)-P_\nu(k)\}\quad\textrm{and}\quad c_{\mu,\nu}=\sup_{k\in\R}\{C_\mu(k)-C_\nu(k)\}.
$$
(This generalizes the definition of $p_{\mu,\nu},c_{\mu,\nu}$, given in \eqref{eq:p_c} for probability measures.) Similarly as for \eqref{eq:p_c}, note that $k\mapsto\{P_\mu(k)-P_\nu(k)\}$ is continuous, $\lim_{k\to-\infty}\{P_\mu(k)-P_\nu(k)\}=0$ and $\lim_{k\to\infty}\{P_\mu(k)-P_\nu(k)\}=\lim_{k\to\infty}\left\{(\overline\nu-\overline\mu)-k\left(\nu(\R)-\mu(\R)\right)\right\}$, from which we conclude that $p_{\mu,\nu}\in[0,\infty)$. In particular, if $p_{\mu,\nu}>0$, then $p_{\mu,\nu}=(P_\mu(k^*)-P_\nu(k^*))$ for some $k^*\in\R$. A similar argument shows that $c_{\mu,\nu}\in[0,\infty)$, and, in the case $c_{\mu,\nu}>0$, $c_{\mu,\nu}=(C_\mu(k^*)-C_\nu(k^*))$ for some $k^*\in\R$. On the other hand, if $p_{\mu,\nu}=0$ (resp. $c_{\mu,\nu}=0$) then $\mu\leq_{pcd}\nu$ (resp. $\mu\leq_{pci}\nu$) and we can embed $\mu$ into $\nu$ using a supermartingale (resp. submartingale).
\begin{lemma}\label{lem:convex_hull_measure}
Let $\mu,\nu\in\sM$ with $\mu(\R)\leq\nu(\R)$. The second (distributional) derivative of $k\mapsto (P_\nu-P_\mu)^c(k)$ corresponds to a (unique) measure $\theta_{\mu,\nu}\in\sM$ such that $P_{\theta_{\mu,\nu}}(k)=(P_\nu-P_\mu)^c(k)+p_{\mu,\nu}$, $k\in\R$. Furthermore, $\theta_{\mu,\nu}(\R)=(\nu(\R)-\mu(\R))$ and $\overline{\theta_{\mu,\nu}}=\overline\nu-\overline\mu+c_{\mu,\nu}-p_{\mu,\nu}$.
\end{lemma}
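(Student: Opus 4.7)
The strategy is to show that
\begin{equation*}
H(k) := (P_\nu-P_\mu)^c(k)+p_{\mu,\nu}
\end{equation*}
belongs to the class $\sD(\alpha,\beta)$ introduced in Section~\ref{sec:prelims}, with $\alpha:=\nu(\R)-\mu(\R)$ and $\beta:=\overline\nu-\overline\mu+c_{\mu,\nu}-p_{\mu,\nu}$. Once this is done, the converse of the Hirsch et al.\ characterization recalled there produces a unique $\theta_{\mu,\nu}\in\sM$ with total mass $\alpha$, first moment $\beta$, and $P_{\theta_{\mu,\nu}}=H$; since any such measure coincides with the second distributional derivative of its modified potential, the identification $\theta_{\mu,\nu}=((P_\nu-P_\mu)^c)''$ follows as well. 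For the degenerate case $\alpha=0$, put-call parity forces $p_{\mu,\nu}=c_{\mu,\nu}+(\overline\nu-\overline\mu)$, so $\beta=0$ and $\sD(0,0)=\{0\}$ is hit by $H\equiv 0$, consistent with the stated formulas.

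Four of the five defining properties of $\sD(\alpha,\beta)$ are routine. Convexity is built into the convex hull. Non-negativity follows from $P_\mu-P_\nu\le p_{\mu,\nu}$, which gives $(P_\nu-P_\mu)+p_{\mu,\nu}\ge 0$, a property preserved by taking the convex hull. For monotonicity, note that $H\le(P_\nu-P_\mu)+p_{\mu,\nu}\to p_{\mu,\nu}$ as $k\to-\infty$, so $H$ is bounded above on a left tail; a convex function with this property cannot admit a negative subgradient anywhere (otherwise a tangent line forces blow-up at $-\infty$), so $H$ is non-decreasing on all of $\R$. For the left asymptote $\lim_{k\to-\infty}H=0$ I split into cases: if $p_{\mu,\nu}>0$, the excerpt provides attainment at some $k^*$, so $((P_\nu-P_\mu)+p_{\mu,\nu})(k^*)=0$ and hence $H(k^*)=0$; monotonicity together with $H\ge 0$ then pins $H\equiv 0$ on $(-\infty,k^*]$. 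If $p_{\mu,\nu}=0$, the sandwich $0\le(P_\nu-P_\mu)^c\le P_\nu-P_\mu\to 0$ gives the same conclusion.

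The main obstacle is the right asymptote $H(k)-(\alpha k-\beta)\to 0$ as $k\to+\infty$. The device is to dualize via put-call parity $C_\nu-C_\mu=(P_\nu-P_\mu)-\alpha k+(\overline\nu-\overline\mu)$. Since the convex-hull operation satisfies $(f+\ell)^c=f^c+\ell$ for every affine $\ell$, one obtains
\begin{equation*}
H(k)-(\alpha k-\beta)=(C_\nu-C_\mu)^c(k)+c_{\mu,\nu}=:K(k).
\end{equation*}
Now the argument of the previous paragraph runs mirror-imaged about $+\infty$: $K$ is convex and non-negative (as $C_\mu-C_\nu\le c_{\mu,\nu}$); from $K\le(C_\nu-C_\mu)+c_{\mu,\nu}\to c_{\mu,\nu}$ as $k\to+\infty$, $K$ is bounded above on a right tail, which forces every subgradient of $K$ to be non-positive and hence $K$ is non-increasing on $\R$. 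Finally, $\lim_{k\to+\infty}K=0$ follows by the same two-case analysis: if $c_{\mu,\nu}>0$, attainment at some $k^{**}$ gives $K(k^{**})=0$, and monotonicity together with $K\ge 0$ forces $K\equiv 0$ on $[k^{**},\infty)$; if $c_{\mu,\nu}=0$, the sandwich $0\le(C_\nu-C_\mu)^c\le C_\nu-C_\mu\to 0$ applies directly. This places $H$ in $\sD(\alpha,\beta)$ and completes the proof.
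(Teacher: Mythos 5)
Your proof is correct and reaches the same goal as the paper --- verifying that $H:=(P_\nu-P_\mu)^c+p_{\mu,\nu}$ lies in $\sD(\nu(\R)-\mu(\R),\,\overline\nu-\overline\mu+c_{\mu,\nu}-p_{\mu,\nu})$ and invoking the Hirsch et al.\ characterization --- but it organizes the right-tail analysis differently. The paper establishes a two-sided sandwich
$$P_\nu(k)-P_\mu(k)+p_{\mu,\nu}\;\geq\;H(k)\;\geq\;\max\bigl\{0,\;(\nu(\R)-\mu(\R))k-(\overline\nu-\overline\mu+c_{\mu,\nu}-p_{\mu,\nu})\bigr\}$$
and then argues separately about the left and right asymptotes by finding, via attainment of $p_{\mu,\nu}$ and $c_{\mu,\nu}$, points $k_1,k_2$ where $H$ touches the lower bound, after which convexity forces $H$ to coincide with the appropriate affine piece on the respective tail. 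You instead exploit the affine-invariance of the convex hull, $(f+\ell)^c=f^c+\ell$ for $\ell$ affine, to rewrite
$$H(k)-(\alpha k-\beta)=(C_\nu-C_\mu)^c(k)+c_{\mu,\nu}=:K(k),$$
so that the right-tail statement becomes a verbatim mirror image of the left-tail one with $P$ replaced by $C$ and $p_{\mu,\nu}$ by $c_{\mu,\nu}$. This is a genuine packaging improvement: it eliminates the asymmetry in the paper's treatment of the two tails, makes the structure of the argument transparent, and reduces the number of estimates one has to track. Both approaches rest on the same ingredients (put-call parity, attainment of the sup defining each constant when positive, convexity and monotonicity of the hull), so the content is equivalent; what you buy is reusability of the left-tail argument and a cleaner exposition. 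The degenerate case $\alpha=0$ is also handled correctly in your note at the outset, matching the paper's convention that $\sD(0,0)=\{0\}$.
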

\begin{proof}
Define $k\mapsto h(k):=(P_\nu-P_\mu)^c(k)+p_{\mu,\nu}$. In order to prove the claim we need to show that $h\in\sD(\nu(\R)-\mu(\R),\overline\nu-\overline\mu+c_{\mu,\nu}-p_{\mu,\nu})$. (Uniqueness is immediate since the measures are uniquely identified by their potential functions.) Convexity of $h$ is clear, and thus we are left to show that $h$ is non-decreasing and has a correct asymptotic behaviour.

Observe that, by the definitions of $p_{\mu,\nu}$ and $c_{\mu,\nu}$, and the Put-Call parity, we have that $(P_\nu(k)-P_\mu(k))\geq \max\{-p_{\mu,\nu},(\nu(\R)-\mu(\R))k-(\overline\nu-\overline\mu+c_{\mu,\nu})\}$ for all $k\in\R$. Therefore, $(P_\nu-P_\mu)^c(k)\geq \max\{-p_{\mu,\nu},(\nu(\R)-\mu(\R))k-(\overline\nu-\overline\mu+c_{\mu,\nu})\}$ for all $k\in\R$. It follows that
$$
P_\nu(k)-P_\mu(k)+p_{\mu,\nu}\geq h(k)\geq \max\{0,(\nu(\R)-\mu(\R))k-(\overline\nu-\overline\mu+c_{\mu,\nu}-p_{\mu,\nu})\}\geq 0,\quad k\in\R,
$$
which we will now use to deduce the desired properties of $h$.

First, we have that
\begin{equation}\label{eq:h}
0\leq \lim_{k\to-\infty} h(k)\leq p_{\mu,\nu},
\end{equation}
and thus, since $h$ is convex, $h$ must also be non-decreasing. This also shows that, in the case $p_{\mu,\nu}=0$, $\lim_{k\to-\infty} h(k)=0$. We now argue that $\lim_{k\to-\infty} h(k)=0$ holds when $p_{\mu,\nu}>0$ as well. Indeed, in this case, for some $k_1\in\R$ we have that
$$
0\leq h(k_1) \leq P_\nu(k_1)-P_\mu(k_1)+p_{\mu,\nu} =0,
$$
which, combined with the convexity of $h$ and \eqref{eq:h}, shows that $\lim_{k\to-\infty} h(k)=0$. (Indeed, if $h'(k_--)<0$ for some $k_-\in(-\infty,k_1]$, then for a small enough $k< k_-$ we must have that $h(k)>p_{\mu,\nu}$, and thus also $\lim_{k\to-\infty} h(k)>p_{\mu,\nu}$, a contradiction. Hence $h(k)=0$ for all $k\leq k_1$, from which the asymptotic behaviour follows.)

Secondly, we also have that
\begin{align*}&\lim_{k\to\infty}\left\{(\nu(\R)-\mu(\R))k-(\overline\nu-\overline\mu+c_{\mu,\nu}-p_{\mu,\nu})\right\}\\ \leq&\lim_{k\to\infty} h(k)\leq  \lim_{k\to\infty}\{(\nu(\R)-\mu(\R))k-(\overline\nu-\overline\mu-p_{\mu,\nu})\},
\end{align*}
and thus in the case $c_{\mu,\nu}=0$ we immediately have that
$$
\lim_{k\to\infty}h(k)= \lim_{k\to\infty}\{(\nu(\R)-\mu(\R))k-(\overline\nu-\overline\mu+c_{\mu,\nu}-p_{\mu,\nu})\}.
$$
If $c_{\mu,\nu}>0$, then $C_\nu(k_2)-C_\mu(k_2)=-c_{\mu,\nu}$ for some $k_2\in\R$, which by the Put-Call parity is equivalent to
$$
P_\nu(k_2)-P_\mu(k_2)+p_{\mu,\nu}=(\nu(\R)-\mu(\R))k_2-(\overline\nu-\overline\mu+c_{\mu,\nu}-p_{\mu,\nu}).
$$
It follows that 
\begin{align*}
(\nu(\R)-\mu(\R))k_2-(\overline\nu-\overline\mu+c_{\mu,\nu}-p_{\mu,\nu})
&=P_\nu(k_2)-P_\mu(k_2)+p_{\mu,\nu}\\
&\geq 
h(k_2)\\&\geq \max\{0,(\nu(\R)-\mu(\R))k_2-(\overline\nu-\overline\mu+c_{\mu,\nu}-p_{\mu,\nu})\}\\&\geq(\nu(\R)-\mu(\R))k_2-(\overline\nu-\overline\mu+c_{\mu,\nu}-p_{\mu,\nu}),
\end{align*}
and therefore
$$
h(k_2)=(\nu(\R)-\mu(\R))k_2-(\overline\nu-\overline\mu+c_{\mu,\nu}-p_{\mu,\nu}).
$$
Then, (similarly as for the asymptotic behaviour at $-\infty$) if $h'(k_+)>(\nu(\R)-\mu(\R))$ for some $k_+\in[k_2,+\infty)$, then for a large enough $k> k_+$, $h(k)>(\nu(\R)-\mu(\R))k-(\overline\nu-\overline\mu+c_{\mu,\nu}-p_{\mu,\nu})$, and thus (by the convexity of $h$) also $\lim_{k\to\infty}h(k)>\lim_{k\to\infty}\{(\nu(\R)-\mu(\R))k-(\overline\nu-\overline\mu+c_{\mu,\nu}-p_{\mu,\nu})\}$, a contradiction. It follows that $h(k)=(\nu(\R)-\mu(\R))k-(\overline\nu-\overline\mu+c_{\mu,\nu}-p_{\mu,\nu})$ for all $k\geq k_2$, and thus $
\lim_{k\to\infty}h(k)= \lim_{k\to\infty}\{(\nu(\R)-\mu(\R))k-(\overline\nu-\overline\mu+c_{\mu,\nu}-p_{\mu,\nu})\}.
$

Combining the convexity, monotonicity and the asymptotic behaviour (at $-\infty$ and $\infty$) of $k\mapsto h(k)$, we conclude that $h\in\sD(\nu(\R)-\mu(\R),\overline\nu-\overline\mu+c_{\mu,\nu}-p_{\mu,\nu})$, which finishes the proof.
\end{proof}

We now introduce the generalized shadow measure, which we uniquely identify by the associated potential function of a special form.
\begin{lemma}[Generalized shadow measure]\label{lem:shadow_potential}
The second (distributional) derivative of $k\mapsto P_\nu(k)- (P_\nu-P_\mu)^c(k)$ corresponds to the unique measure $S^\nu(\mu)\in \mathcal T_{\mu,\nu}$ that satisfies $$P_{S^\nu(\mu)}(k)= P_\nu(k)- (P_\nu-P_\mu)^c(k)-p_{\mu,\nu},~k\in\R,\quad\textrm{and}\quad\overline{S^\nu(\mu)}=\overline\mu +p_{\mu,\nu}-c_{\mu,\nu}.$$
\end{lemma}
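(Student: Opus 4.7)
The plan is to realize $S^\nu(\mu)$ as $\nu-\theta_{\mu,\nu}$, where $\theta_{\mu,\nu}$ is the measure supplied by Lemma~\ref{lem:convex_hull_measure}. Recall that lemma gives $P_{\theta_{\mu,\nu}}=(P_\nu-P_\mu)^c+p_{\mu,\nu}$ with $\theta_{\mu,\nu}(\R)=\nu(\R)-\mu(\R)$ and $\overline{\theta_{\mu,\nu}}=\overline\nu-\overline\mu+c_{\mu,\nu}-p_{\mu,\nu}$. Once we know $\theta_{\mu,\nu}\leq\nu$, the measure $S^\nu(\mu):=\nu-\theta_{\mu,\nu}$ automatically lies in $\sM$ and in $\mathcal T_{\mu,\nu}$, with mass $\mu(\R)$, mean $\overline\nu-\overline{\theta_{\mu,\nu}}=\overline\mu+p_{\mu,\nu}-c_{\mu,\nu}$, and potential $P_\nu-P_{\theta_{\mu,\nu}}$; uniqueness is immediate because elements of $\sM$ are pinned down by their potentials.

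By the equivalence recalled in Section~\ref{sec:prelims}, $\theta_{\mu,\nu}\leq\nu$ is the same as the convexity of
\[
h(k)\;:=\;P_\nu(k)-(P_\nu-P_\mu)^c(k)-p_{\mu,\nu},\qquad k\in\R,
\]
and this convexity is the main obstacle. I would write $g:=P_\nu-P_\mu$ and decompose $\R$ into the closed contact set $E:=\{g^c=g\}$ and its complement $U:=\{g^c<g\}$, which is an at most countable disjoint union of open intervals $I_n=(a_n,b_n)$. On each $I_n$ the envelope $g^c$ is affine with some slope $m_n$, so $h$ coincides with the convex function $P_\nu-m_nk-c_n-p_{\mu,\nu}$ there; on $E$ one has $h=P_\mu-p_{\mu,\nu}$, again convex. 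It then suffices to verify $h'(k^-)\leq h'(k^+)$ at every endpoint $k\in\partial I_n$. At such a point $b_n$ (the case $a_n$ is symmetric), the affine minorant $m_nk+c_n$ touches $g$ from below at $b_n$ and lies strictly below $g$ on $I_n$, so $g(k)\geq g(b_n)+m_n(k-b_n)$ for $k\in I_n$; dividing by the negative $k-b_n$ and letting $k\to b_n^-$ yields $g'(b_n^-)\leq m_n$, i.e.\ $P_\nu'(b_n^-)-m_n\leq P_\mu'(b_n^-)\leq P_\mu'(b_n^+)$. Depending on whether a small right neighbourhood of $b_n$ lies in $E$ or in an adjacent affine interval $I_{n'}$ with slope $m_{n'}$, $h'(b_n^+)$ equals $P_\mu'(b_n^+)$ or $P_\nu'(b_n^+)-m_{n'}$; in the latter case the analogous supporting-line argument applied at the left endpoint of $I_{n'}$ gives $P_\nu'(b_n^+)-m_{n'}\geq P_\mu'(b_n^+)$. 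In either case $h'(b_n^+)\geq P_\mu'(b_n^+)\geq P_\nu'(b_n^-)-m_n=h'(b_n^-)$, so $h$ is convex at $b_n$; convexity on the remaining (possibly accumulation) points of $\partial U$ follows by passing to the limit in the three-point convexity inequality.

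To identify $h$ as the potential of a measure, I then verify asymptotics. From Lemma~\ref{lem:convex_hull_measure}, $\lim_{k\to-\infty}P_{\theta_{\mu,\nu}}(k)=0$ and $P_{\theta_{\mu,\nu}}(k)-\bigl((\nu(\R)-\mu(\R))k-(\overline\nu-\overline\mu+c_{\mu,\nu}-p_{\mu,\nu})\bigr)\to 0$ at $+\infty$; subtracting from the standard asymptotics of $P_\nu$ gives $\lim_{k\to-\infty}h(k)=0$ and $h(k)-\bigl(\mu(\R)k-(\overline\mu+p_{\mu,\nu}-c_{\mu,\nu})\bigr)\to 0$ at $+\infty$. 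A convex $h$ with $h(-\infty)=0$ is automatically non-decreasing (otherwise the monotone left derivatives would be eventually strictly negative, forcing $h\to+\infty$ at $-\infty$), hence also non-negative. Therefore $h\in\sD(\mu(\R),\overline\mu+p_{\mu,\nu}-c_{\mu,\nu})$, and the converse direction of the Chacon--Walsh correspondence (Proposition~2.1 of Hirsch et al., cited in Section~\ref{sec:prelims}) produces the unique $S^\nu(\mu)\in\sM$ with the prescribed mass, mean, and potential. Finally $S^\nu(\mu)=\nu-\theta_{\mu,\nu}$ is dominated by $\nu$ by construction, so $S^\nu(\mu)\in\mathcal T_{\mu,\nu}$, completing the proof. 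The asymptotic identifications and the appeal to the general characterization of potentials are routine; the only delicate point is the boundary convexity check in the middle paragraph.
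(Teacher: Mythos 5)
Your argument has the same skeleton as the paper's: with $h:=P_\nu-(P_\nu-P_\mu)^c-p_{\mu,\nu}$, both reduce the lemma to showing $h\in\sD(\mu(\R),\overline\mu+p_{\mu,\nu}-c_{\mu,\nu})$, obtain the asymptotics from Lemma~\ref{lem:convex_hull_measure}, deduce monotonicity and non-negativity of $h$ from convexity plus $h(-\infty)=0$, and then invoke the Chacon--Walsh/Hirsch correspondence to read off the unique measure with potential $h$. The one place where you genuinely diverge is the crucial convexity of $h$: the paper dispatches it in one line by citing Beiglb\"ock et al.\ \cite[Lemma 2.3]{beiglbock2021shadow}, whereas you give a self-contained first-principles proof via the contact set $E=\{g^c=g\}$ and the affine structure of $g^c$ on the components $I_n=(a_n,b_n)$ of $\{g^c<g\}$. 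Your case analysis at a finite boundary point $b_n$ is correct (the supporting-line inequality $g'(b_n^-)\le m_n$, convexity of $P_\mu$, and the two sub-cases for the right side combine to give $h'(b_n^-)\le P_\mu'(b_n^-)\le P_\mu'(b_n^+)\le h'(b_n^+)$). The trade-off is clear: the paper's proof is shorter and outsources the only nontrivial step, while yours is self-contained at the cost of a somewhat informal closing remark about accumulation points of $\partial U$, which you handle by ``passing to the limit in the three-point convexity inequality'' — a standard move, but it would be worth stating explicitly that $h$ is continuous and that the three-point inequality, once known on the (dense) non-accumulation configurations, extends by continuity. Neither approach is more general; yours is more elementary, the paper's is cleaner for a reader who already trusts the cited lemma.
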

\begin{proof}
Define $k\mapsto h(k):=(P_\nu-P_\mu)^c+p_{\mu,\nu}$. We need to show that $(P_\nu-h)\in\sP(\mu,\nu)$ (see \eqref{eq:sP}) (and that the underlying measure has a correct mean), which is equivalent to $(P_\nu-h)\in \sD(\mu(\R),\overline\mu+p_{\mu,\nu}-c_{\mu,\nu})$ and $[P_\nu-(P_\nu-h)]=h$ being convex. The latter, however, immediately follows from the definition of $h$.

We now show that $(P_\nu-h)\in\sD(\mu(\R),\overline\mu+p_{\mu,\nu}-c_{\mu,\nu})$. By Lemma \ref{lem:convex_hull_measure} we have that $h\in\sD(\nu(\R)-\mu(\R),\overline\nu-\overline\mu+c_{\mu,\nu}-p_{\mu,\nu})$ and therefore $\lim_{\lvert k\lvert\to\infty}\{P_\nu(k)- h(k)\}=\lim_{\lvert k\lvert\to\infty}\max\{0,\mu(\R)k-(\overline\mu+p_{\mu,\nu}-c_{\mu,\nu})\}$. Hence the claim follows if $(P_\nu-h)$ is convex. But this is immediate from Beiglb\"ock et al. \cite[Lemma 2.3]{beiglbock2021shadow}.
\end{proof} 
For any $\theta\in \mathcal T_{\mu,\nu}$, define $$p_\theta:=\sup_{k\in\R}\{P_\mu(k)-P_\theta(k)\}.$$ Note that, $\lim_{k\to-\infty}\{P_\mu(k)-P_\theta(k)\}=0$ and (since $\theta(\R)=\mu(\R)$) $\lim_{k\to\infty}\{P_\mu(k)-P_\theta(k)\}=\overline\theta-\overline\mu$, and therefore $p_\theta\in\R_+$. Moreover,
$$
p_\theta=-\inf_{k\in\R}\{P_\theta(k)-P_\mu(k)\}=-(P_\theta-P_\mu)^c(k),\quad k\in\R
$$ and, in particular, $\inf\{m\geq0:P_{\theta}+m\geq P_\mu\textrm{ on }\R\}=p_\theta$. Using the Put-Call parity we further have that $p_\theta=\sup_{k\in\R}\{C_\mu(k)-C_\theta(k)\}-\overline\mu+\overline\theta$, and by defining
$$
c_\theta:=\sup_{k\in\R}\{C_\mu(k)-C_\theta(k)\}\in\R,
$$
we obtain that $\overline\theta=\overline\mu+p_\theta-c_\theta$.
\begin{lemma}\label{lem:shadow_mean}Fix $\mu,\nu\in\sM$ with $\mu(\R)\leq\nu(\R)$, and let $S^\nu(\mu)$ be the shadow measure as in Lemma \ref{lem:shadow_potential}. Then
	$$p_{S^\nu(\mu)}=p_{\mu,\nu}\quad\textrm{and}\quad c_{S^\nu(\mu)}=c_{\mu,\nu}.$$
\end{lemma}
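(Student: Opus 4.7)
The plan is to unwind both quantities directly from the potential–function identity in Lemma \ref{lem:shadow_potential} and use the general book-keeping identity $\overline\theta=\overline\mu+p_\theta-c_\theta$ that holds for every $\theta\in\mathcal T_{\mu,\nu}$.

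First I would compute $p_{S^\nu(\mu)}$. By definition and Lemma \ref{lem:shadow_potential},
\[
P_\mu(k)-P_{S^\nu(\mu)}(k)=P_\mu(k)-P_\nu(k)+(P_\nu-P_\mu)^c(k)+p_{\mu,\nu},\quad k\in\R.
\]
Writing $f:=P_\nu-P_\mu$, this becomes $P_\mu-P_{S^\nu(\mu)}=f^c-f+p_{\mu,\nu}$. Since $f^c$ is the largest convex minorant of $f$ one has $f^c\leq f$ everywhere, hence $\sup_k(f^c-f)\leq 0$; conversely the pointwise inequality $f^c\le f$ is saturated at any contact point (and such contact points exist, because $\inf f=\inf f^c=-p_{\mu,\nu}$ is attained at, say, $-\infty$ if $p_{\mu,\nu}=0$, and at the argmin of $f$ otherwise, as noted in the paper just before Lemma \ref{lem:convex_hull_measure}). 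Therefore $\sup_k(f^c-f)=0$, which yields
\[
p_{S^\nu(\mu)}=\sup_{k\in\R}\{P_\mu(k)-P_{S^\nu(\mu)}(k)\}=0+p_{\mu,\nu}=p_{\mu,\nu}.
\]

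Next I would get $c_{S^\nu(\mu)}$ essentially for free from Put–Call parity, without redoing a supremum. Applied to $\theta=S^\nu(\mu)\in\mathcal T_{\mu,\nu}$, the general identity $\overline\theta=\overline\mu+p_\theta-c_\theta$ (which the excerpt establishes immediately before the lemma by combining Put–Call parity with the defining suprema) gives
\[
c_{S^\nu(\mu)}=\overline\mu+p_{S^\nu(\mu)}-\overline{S^\nu(\mu)}.
\]
Now substitute $p_{S^\nu(\mu)}=p_{\mu,\nu}$ from the first step together with the mean formula $\overline{S^\nu(\mu)}=\overline\mu+p_{\mu,\nu}-c_{\mu,\nu}$ from Lemma \ref{lem:shadow_potential}. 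The two $\overline\mu$ cancel, the two $p_{\mu,\nu}$ cancel, and one obtains $c_{S^\nu(\mu)}=c_{\mu,\nu}$.

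There is no genuine obstacle here; the only mildly delicate point is confirming that the supremum $\sup_k(f^c(k)-f(k))$ is actually attained (equivalently, that the value $0$ is reached and not merely approached). This is handled by either invoking the continuity of $f$ and $f^c$ together with the fact that $f^c$ agrees with $f$ on the complement of the open set on which the two differ, or, more prosaically, by noting that the argmin of $f$ (if $p_{\mu,\nu}>0$) or a suitable asymptotic limit (if $p_{\mu,\nu}=0$) furnishes a point of contact — both cases already analysed in the paper before Lemma \ref{lem:convex_hull_measure}.
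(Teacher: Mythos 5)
Your proof is correct and follows the same two-step structure as the paper: you first show $p_{S^\nu(\mu)}=p_{\mu,\nu}$, and then read off $c_{S^\nu(\mu)}=c_{\mu,\nu}$ from the book-keeping identity $\overline\theta=\overline\mu+p_\theta-c_\theta$ applied to $\theta=S^\nu(\mu)$ together with the mean formula $\overline{S^\nu(\mu)}=\overline\mu+p_{\mu,\nu}-c_{\mu,\nu}$ of Lemma~\ref{lem:shadow_potential}. That second half coincides with the paper's argument. The only divergence is in the first half. The paper uses the compact formula $p_\theta=-\left(P_\theta-P_\mu\right)^c$, substitutes the potential identity to obtain a nested convex hull, and then simplifies $\bigl((P_\nu-P_\mu)-(P_\nu-P_\mu)^c-p_{\mu,\nu}\bigr)^c$ by invoking Beiglb\"ock et al.\ [Lemma~2.4 of \cite{beiglbock2021shadow}]. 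You instead work directly from the supremum definition, writing $P_\mu-P_{S^\nu(\mu)}=(f^c-f)+p_{\mu,\nu}$ with $f=P_\nu-P_\mu$ and showing $\sup_k\{f^c(k)-f(k)\}=0$: the inequality $\le 0$ is trivial from $f^c\le f$, and equality is witnessed at a contact point (the argmin of $f$ when $p_{\mu,\nu}>0$, which the paper explicitly records before Lemma~\ref{lem:convex_hull_measure}) or in the limit $k\to-\infty$ when $p_{\mu,\nu}=0$ (where both $f$ and $f^c$ tend to $0$, since $f^c\ge -p_{\mu,\nu}=0$ is also noted there). This is a slightly more elementary, self-contained substitute for the external convex-hull lemma, and nothing is missing: both cases needed to produce the contact are already established in the paper's preliminary remarks.
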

\begin{proof}
By Lemma \ref{lem:shadow_potential} we have that $S^\nu(\mu)\in \mathcal T_{\mu,\nu}$, and therefore $\overline{S^\nu(\mu)}=\overline\mu+p_{S^\nu(\mu)}-c_{S^\nu(\mu)}$. On the other hand, by Lemma \ref{lem:shadow_potential} we also have that $\overline{S^\nu(\mu)}=\overline\mu +p_{\mu,\nu}-c_{\mu,\nu}$. Hence it is enough to prove that $p_{S^\nu(\mu)}=p_{\mu,\nu}$. We have that
	\begin{align*}
	p_{S^\nu(\mu)}&=-(P_{S^\nu(\mu)}-P_\mu)^c\\&=-((P_\nu-P_\mu)-(P_\nu-P_\mu)^c-p_{\mu,\nu})^c=-((P_\nu-P_\mu)^c-(P_\nu-P_\mu)^c-p_{\mu,\nu})^c=p_{\mu,\nu},
	\end{align*}
	where the third equality is a direct consequence of Beiglb\"ock et al. \cite[Lemma 2.4]{beiglbock2021shadow}. 
\end{proof}
The next lemma describes a special minimality (in terms of potential functions) property of the shadow measure.
\begin{lemma}\label{lem:shadow_minimality} Fix $\mu,\nu\in\sM$ with $\mu(\R)\leq \nu(\R)$, and let $S^\nu(\mu)$ be the shadow measure. Then, for all $\theta \in \mathcal T_{\mu,\nu}$,
	$$P_\mu(k)\leq P_{S^\nu(\mu)}(k)+p_{S^\nu(\mu)}\leq P_\theta(k)+p_\theta,\quad k\in\R,$$
	and $p_{S^\nu(\mu)}\leq p_\theta$.
\end{lemma}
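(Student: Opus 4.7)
The plan is to reduce every claim to the single fact that $P_\nu-P_\theta-p_\theta$ is a convex minorant of $P_\nu-P_\mu$, which is therefore dominated by the convex hull $(P_\nu-P_\mu)^c$ that defines the shadow measure.

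First I would use Lemmas \ref{lem:shadow_potential} and \ref{lem:shadow_mean} to rewrite the two-sided inequality in more transparent form. Substituting $p_{S^\nu(\mu)}=p_{\mu,\nu}$ and $P_{S^\nu(\mu)}=P_\nu-(P_\nu-P_\mu)^c-p_{\mu,\nu}$ turns the claim into
$$
(P_\nu-P_\mu)^c(k)\leq P_\nu(k)-P_\mu(k)\quad\textrm{and}\quad P_\nu(k)-(P_\nu-P_\mu)^c(k)\leq P_\theta(k)+p_\theta,
$$
for all $k\in\R$. The first inequality is immediate, as $(P_\nu-P_\mu)^c$ is by definition a minorant of $P_\nu-P_\mu$.

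The main work lies in the second inequality. The key observation is that $P_\nu-P_\theta$ is convex: since $\theta\leq\nu$ as measures, the distributional second derivative of $P_\nu-P_\theta$ is the non-negative measure $\nu-\theta$, as recalled in Section \ref{sec:prelims}. Shifting by the constant $p_\theta$ preserves convexity, so $k\mapsto P_\nu(k)-P_\theta(k)-p_\theta$ is convex. On the other hand, the very definition of $p_\theta$ gives $P_\theta(k)+p_\theta\geq P_\mu(k)$ pointwise, which rearranges to $P_\nu(k)-P_\theta(k)-p_\theta\leq P_\nu(k)-P_\mu(k)$. Hence $P_\nu-P_\theta-p_\theta$ is a convex minorant of $P_\nu-P_\mu$, and by the maximality of $(P_\nu-P_\mu)^c$ it lies below $(P_\nu-P_\mu)^c$, which is precisely what is needed.

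Finally, for $p_{S^\nu(\mu)}\leq p_\theta$, I would observe that $\theta\leq\nu$ yields $P_\theta\leq P_\nu$ pointwise, so $P_\mu-P_\theta\geq P_\mu-P_\nu$; taking suprema gives $p_\theta\geq p_{\mu,\nu}=p_{S^\nu(\mu)}$, where the last equality is Lemma \ref{lem:shadow_mean}. I do not anticipate a real obstacle: the argument is an exercise in the duality between the pointwise order on measures and the curvature order on modified potentials, and the only conceptual step is recognizing $P_\nu-P_\theta-p_\theta$ as a convex minorant of $P_\nu-P_\mu$, after which the maximality property of the convex hull closes the argument.
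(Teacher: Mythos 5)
Your proof is correct and follows essentially the same route as the paper: the heart of the argument in both is the observation that $P_\nu-P_\theta-p_\theta$ is a convex minorant of $P_\nu-P_\mu$, which is therefore dominated by $(P_\nu-P_\mu)^c$. The only minor variation is in deriving $p_{S^\nu(\mu)}\leq p_\theta$ — the paper obtains it by letting $k\to-\infty$ in the two-sided potential inequality already established, while you give a direct argument from $P_\theta\leq P_\nu$; both are valid and equally short.
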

\begin{proof}
	First, $P_\mu\leq P_\theta+p_\theta$ on $\R$, and therefore
	$$
	P_\nu-P_\theta-p_\theta\leq P_\nu-P_\mu\quad\textrm{on }\R.
	$$
	Since $\theta\in T_{\mu,\nu}$, we have that $(\nu-\theta)\in\sM$, and therefore $k\mapsto 	[P_\nu(k)-P_\theta(k)-p_\theta]=[P_{\nu-\theta}(k)-p_\theta]$ is convex. It follows that
	$$
	P_\nu-P_\theta-p_\theta\leq (P_\nu-P_\mu)^c\leq P_\nu-P_\mu\quad\textrm{on }\R,
	$$
	from which (using Lemmas \ref{lem:shadow_potential} and \ref{lem:shadow_mean}) we conclude that
	$$
P_\mu(k)\leq P_{S^\nu(\mu)}(k)+p_{{S^\nu(\mu)}}=P_\nu(k) - (P_\nu-P_\mu)^c(k)  -p_{\mu,\nu}+p_{{S^\nu(\mu)}}\leq P_\theta(k)+p_\theta\quad k\in\R.
	$$
	Letting $k\to-\infty$ we obtain that $p_{S^\nu(\mu)}\leq p_\theta$. 
\end{proof}
\begin{corollary}\label{cor:shadow_minimality} Fix $\mu,\nu\in\sM$ with $\mu(\R)\leq \nu(\R)$. Then for all $\theta \in \mathcal T_{\mu,\nu}$ we have that
	$$C_\mu(k)\leq C_{S^\nu(\mu)}(k)+c_{S^\nu(\mu)}\leq C_\theta(k)+c_\theta,\quad k\in\R,$$
	and $c_{S^\nu(\mu)}\leq c_\theta$.
\end{corollary}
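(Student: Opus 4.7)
The plan is to reduce the corollary to Lemma \ref{lem:shadow_minimality} via the Put-Call parity, which converts statements about $P$-potentials into statements about $C$-potentials up to an affine correction. The only care needed is to verify that the affine corrections cancel correctly, using the identities $\overline\eta = \overline\mu + p_\eta - c_\eta$ (valid for $\eta \in \mathcal T_{\mu,\nu}$, by the observation preceding Lemma \ref{lem:shadow_mean}) together with Lemma \ref{lem:shadow_mean}, which gives $p_{S^\nu(\mu)} = p_{\mu,\nu}$ and $c_{S^\nu(\mu)} = c_{\mu,\nu}$.

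First, I would fix $\theta \in \mathcal T_{\mu,\nu}$ and recall that $\theta(\R) = S^\nu(\mu)(\R) = \mu(\R)$. By Put-Call parity, for any $\eta \in \{\mu, S^\nu(\mu), \theta\}$,
\[
C_\eta(k) = P_\eta(k) + \overline\eta - \eta(\R) k, \qquad k \in \R.
\]
This lets me express the three candidate differences in a form amenable to Lemma \ref{lem:shadow_minimality}.

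Next, I would verify the first inequality $C_\mu(k) \leq C_{S^\nu(\mu)}(k) + c_{S^\nu(\mu)}$. Using Put-Call parity and that $\overline{S^\nu(\mu)} - \overline\mu = p_{S^\nu(\mu)} - c_{S^\nu(\mu)}$, I compute
\[
C_{S^\nu(\mu)}(k) + c_{S^\nu(\mu)} - C_\mu(k) = P_{S^\nu(\mu)}(k) - P_\mu(k) + p_{S^\nu(\mu)},
\]
which is nonnegative by Lemma \ref{lem:shadow_minimality}. For the second inequality, a similar computation yields
\[
C_\theta(k) + c_\theta - C_{S^\nu(\mu)}(k) - c_{S^\nu(\mu)} = P_\theta(k) + p_\theta - P_{S^\nu(\mu)}(k) - p_{S^\nu(\mu)},
\]
since the affine parts contribute $(\overline\theta - \overline{S^\nu(\mu)}) + (c_\theta - c_{S^\nu(\mu)}) = p_\theta - p_{S^\nu(\mu)}$, and this quantity is again nonnegative by Lemma \ref{lem:shadow_minimality}.

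Finally, to establish $c_{S^\nu(\mu)} \leq c_\theta$ I would let $k \to \infty$ in the second inequality just proved: since $C_\eta(k) \to 0$ as $k \to \infty$ for every $\eta \in \sM$, the inequality $C_{S^\nu(\mu)}(k) + c_{S^\nu(\mu)} \leq C_\theta(k) + c_\theta$ reduces in the limit to $c_{S^\nu(\mu)} \leq c_\theta$. There is no real obstacle here beyond careful bookkeeping; the whole proof is a symmetric mirror of Lemma \ref{lem:shadow_minimality} via Put-Call parity, and the main thing to keep straight is the relationship $\overline\eta - \overline\mu = p_\eta - c_\eta$ for measures in $\mathcal T_{\mu,\nu}$.
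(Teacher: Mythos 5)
Your proof is correct and takes essentially the same route as the paper's (one-sentence) proof: convert Lemma~\ref{lem:shadow_minimality} from $P$- to $C$-potentials via Put-Call parity and the identity $\overline\theta=\overline\mu+p_\theta-c_\theta$ for $\theta\in\mathcal T_{\mu,\nu}$, then send $k\to\infty$ to obtain $c_{S^\nu(\mu)}\leq c_\theta$. One small remark: the appeal to Lemma~\ref{lem:shadow_mean} in your opening paragraph is superfluous — your computations only use $\overline{S^\nu(\mu)}-\overline\mu=p_{S^\nu(\mu)}-c_{S^\nu(\mu)}$, which already follows from $S^\nu(\mu)\in\mathcal T_{\mu,\nu}$.
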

\begin{proof}
Since, for all $\theta\in \mathcal T_{\mu,\nu}$, $\overline\theta=\overline\mu+p_\theta-c_\theta$, the claim immediately follows from Lemma \ref{lem:shadow_minimality} and the Put-Call parity.
	\end{proof}

While Lemma \ref{lem:shadow_minimality} characterizes the shadow measure in terms of its potential function, the next result provides a characterization in terms of minimal (weak) transportation cost.

Now recall the setting of Section \ref{sec:wot}. For each $\theta\in \mathcal T_{\mu,\nu}$ consider
$$
V(\mu,\theta)=\inf_{\pi\in\Pi(\mu,\theta)}\int_\R \left\lvert x-\int_\R y\pi_x(dy) \right\lvert\mu(dx),
$$
which is equivalent to \eqref{eq:WOT_l1} since $\mu(\R)=\theta(\R)$.

\begin{theorem}\label{thm:shadow_minimalityWOT}
Fix $\mu,\nu\in\sM$ with $\mu(\R)\leq\nu(\R)$. The shadow measure (of $\mu$ in $\nu$) $S^\nu(\mu)$, as in Lemma \ref{lem:shadow_potential}, is the unique element of $\mathcal T_{\mu,\nu}$  satisfying 
\begin{enumerate}
    \item $S^\nu(\mu)\in {\arg\min}_{\theta\in \mathcal T_{\mu,\nu}} V(\mu,\theta)$,
    \item $
S^\nu(\mu)\leq_c \theta^*,\quad\textrm{for all }\theta^*\in {\arg\min}_{\theta\in \mathcal T_{\mu,\nu}} V(\mu,\theta)
$.
\end{enumerate}
\end{theorem}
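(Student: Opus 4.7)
The plan is to reduce both properties of the shadow measure to a single identity,
\[
V(\mu,\theta) = p_\theta + c_\theta \qquad\text{for every } \theta\in\mathcal T_{\mu,\nu},
\]
after which Lemmas \ref{lem:shadow_mean}--\ref{lem:shadow_minimality} and Corollary \ref{cor:shadow_minimality} immediately supply both minimality claims.

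To establish the identity, I first dispose of the degenerate cases via Lemma \ref{lem:WOTsuper}: if $p_\theta=0$ then $\mu\leq_{cd}\theta$ and $V(\mu,\theta)=\overline\mu-\overline\theta=c_\theta-p_\theta=c_\theta$; the case $c_\theta=0$ is symmetric. For the non-degenerate case $\min\{p_\theta,c_\theta\}>0$, Theorem \ref{thm:coupling} applied to $(\mu,\theta)$ gives
\[
V(\mu,\theta)=\bigl(\overline{\chi^-_{\mu,\theta}}-\overline{\eta^-_{\mu,\theta}}\bigr)+\bigl(\overline{\eta^+_{\mu,\theta}}-\overline{\chi^+_{\mu,\theta}}\bigr).
\]
A direct calculation from $\eta^-_{\mu,\theta}=\mu|_{(-\infty,x^-_{\mu,\theta})}$ yields $\overline{\eta^-_{\mu,\theta}}=x^-_{\mu,\theta}\,\mu((-\infty,x^-_{\mu,\theta}))-P_\mu(x^-_{\mu,\theta})$. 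The definition of $\chi^-_{\mu,\theta}$, whose point mass at $x^-_{\mu,\theta}$ is arranged exactly so that $\chi^-_{\mu,\theta}(\R)=\mu((-\infty,x^-_{\mu,\theta}))$, produces the analogous formula $\overline{\chi^-_{\mu,\theta}}=x^-_{\mu,\theta}\,\mu((-\infty,x^-_{\mu,\theta}))-P_\theta(x^-_{\mu,\theta})$. Subtracting and using the tangency $\tilde P_{\mu,\theta}(x^-_{\mu,\theta})=P_\mu(x^-_{\mu,\theta})$ yields $\overline{\chi^-_{\mu,\theta}}-\overline{\eta^-_{\mu,\theta}}=P_\mu(x^-_{\mu,\theta})-P_\theta(x^-_{\mu,\theta})=p_\theta$. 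Repeating the computation at $x^+_{\mu,\theta}$ with $C$-potentials and invoking the Put--Call parity gives $\overline{\eta^+_{\mu,\theta}}-\overline{\chi^+_{\mu,\theta}}=C_\mu(x^+_{\mu,\theta})-C_\theta(x^+_{\mu,\theta})=c_\theta$, and summing produces the identity.

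With the identity in hand, property (1) follows at once: by Lemma \ref{lem:shadow_mean} we have $V(\mu,S^\nu(\mu))=p_{\mu,\nu}+c_{\mu,\nu}$, while Lemma \ref{lem:shadow_minimality} and Corollary \ref{cor:shadow_minimality} give $p_\theta\geq p_{\mu,\nu}$ and $c_\theta\geq c_{\mu,\nu}$ for every $\theta\in\mathcal T_{\mu,\nu}$, so $V(\mu,\theta)=p_\theta+c_\theta\geq V(\mu,S^\nu(\mu))$. Any minimizer $\theta^*$ must therefore satisfy $p_{\theta^*}=p_{\mu,\nu}$ and $c_{\theta^*}=c_{\mu,\nu}$; Lemma \ref{lem:shadow_minimality} then yields $P_{S^\nu(\mu)}+p_{\mu,\nu}\leq P_{\theta^*}+p_{\mu,\nu}$, i.e.\ $P_{S^\nu(\mu)}\leq P_{\theta^*}$. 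Combined with the equal mass $\mu(\R)$ and equal mean $\overline\mu+p_{\mu,\nu}-c_{\mu,\nu}$ (from the general relation $\overline\theta=\overline\mu+p_\theta-c_\theta$), this gives $S^\nu(\mu)\leq_c\theta^*$, which is property (2). Uniqueness is automatic from antisymmetry of $\leq_c$: any $\tilde\theta$ satisfying (1) and (2) would sit both below and above $S^\nu(\mu)$ in convex order.

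The hardest part will be the non-degenerate case of the key identity. The bookkeeping requires handling the point masses at $x^\pm_{\mu,\theta}$ built into the definitions of $\chi^\pm_{\mu,\theta}$, but the computation simplifies once one recognizes that both $\eta^\pm_{\mu,\theta}$ and $\chi^\pm_{\mu,\theta}$ have potential functions that coincide with those of $\mu$ and $\theta$ on the appropriate semi-infinite intervals and extend linearly with matching slope beyond $x^\pm_{\mu,\theta}$, so differences of means collapse to the potential-function differences at the tangency points, which are precisely $p_\theta$ and $c_\theta$.
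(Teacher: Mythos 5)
Your proof is correct, and it takes a genuinely different and noticeably cleaner route than the paper's. The paper also begins by expressing $V(\mu,\theta)$ via Theorem~\ref{thm:coupling} as a sum of mean-differences, but then proves the inequality $V(\mu,\theta)\geq V(\mu,S^\nu(\mu))$ by a direct comparison: using Lemma~\ref{lem:shadow_minimality} to locate $x^-_{\mu,S^\nu(\mu)}\leq x^-_{\mu,\theta}$, introducing an auxiliary sub-measure $S_\theta$ (a suitably truncated copy of $S^\nu(\mu)$) that sits between $\eta^-_{\mu,\theta}$ and $\chi^-_{\mu,\theta}$ in $\leq_{ci}$, and establishing the mean inequality \eqref{eq:submartingale} from that sandwich; the uniqueness argument then requires tracing when equality forces $\leq_c$. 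Your approach bypasses the auxiliary measure entirely by first proving the closed-form identity
\[
V(\mu,\theta)=p_\theta+c_\theta\qquad\text{for every }\theta\in\mathcal T_{\mu,\nu},
\]
after which all of minimality, the identification of the minimal value $p_{\mu,\nu}+c_{\mu,\nu}$, and the $\leq_c$-dominance (2) drop out in one stroke from Lemmas~\ref{lem:shadow_mean}, \ref{lem:shadow_minimality} and Corollary~\ref{cor:shadow_minimality}, since any minimizer is forced to have $p_{\theta^*}=p_{\mu,\nu}$ and $c_{\theta^*}=c_{\mu,\nu}$. The identity itself is verified correctly: the mean of a restriction is read off the potential ($\overline{\eta^-_{\mu,\theta}}=x^-_{\mu,\theta}\Delta^-_{\mu,\theta}-P_\mu(x^-_{\mu,\theta})$, and analogously for $\chi^-_{\mu,\theta}$ after tracking the atom at $x^-_{\mu,\theta}$), and the tangency $\tilde P_{\mu,\theta}(x^\pm_{\mu,\theta})=P_\mu(x^\pm_{\mu,\theta})$ collapses the difference to $p_\theta$ and $c_\theta$ respectively. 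Two side remarks: the identity handles the degenerate cases $p_\theta=0$ or $c_\theta=0$ uniformly via Lemma~\ref{lem:WOTsuper}, whereas the paper's proof only treats $\min\{p_{S^\nu(\mu)},c_{S^\nu(\mu)}\}>0$ and leaves the degenerate branch implicit; and in carrying out the bookkeeping for $\chi^+_{\mu,\theta}$ one has to read the atom in \eqref{eq:nu_decompisition} as $(\tilde P'_{\mu,\theta}(x^+_{\mu,\theta}+)-\Delta^+_{\mu,\theta})\delta_{x^+_{\mu,\theta}}$ (the sign as printed would give a negative atom), which your computation implicitly does. Altogether your version buys a cleaner and more informative statement, essentially a quantitative sharpening of Theorem~\ref{thm:coupling} over $\mathcal T_{\mu,\nu}$, at no extra cost.
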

\begin{proof}
Without loss of generality we assume that $\min\{p_{S^\nu(\mu)},c_{S^\nu(\mu)}\}>0$. Note that, by Lemma \ref{lem:shadow_minimality} we have that $\min\{p_{\theta},c_{\theta}\}\geq\min\{p_{S^\nu(\mu)},c_{S^\nu(\mu)}\}>0$.

Fix $\theta\in\mathcal T_{\mu,\nu}$. Then by Theorem \ref{thm:coupling} we have that
$$
V(\mu,\theta)=V(\eta^-_{\mu,\theta},\chi^-_{\mu,\theta})+V(\eta^+_{\mu,\theta},\chi^+_{\mu,\theta})=(\overline{\chi^-_{\mu,\theta}}-\overline{\eta^-_{\mu,\theta}})+(\overline{\eta^+_{\mu,\theta}}-\overline{\chi^+_{\mu,\theta}}).
$$
Since $S^\nu(\mu)\in\mathcal{T}_{\mu,\nu}$, the goal is to show that
$$
(\overline{\chi^-_{\mu,\theta}}-\overline{\eta^-_{\mu,\theta}})\geq(\overline{\chi^-_{\mu,S^\nu(\mu)}}-\overline{\eta^-_{\mu,S^\nu(\mu)}})\quad\textrm{and} \quad(\overline{\eta^+_{\mu,\theta}}-\overline{\chi^+_{\mu,\theta}})\geq\quad(\overline{\eta^+_{\mu,S^\nu(\mu)}}-\overline{\chi^+_{\mu,S^\nu(\mu)}}).$$
We will only treat the first inequality, the second inequality can be obtained using symmetric arguments. 

By Lemma \ref{lem:shadow_minimality}, we have that $x^-_{\mu,S^\nu(\mu)}\leq x^-_{\mu,\theta}$. Then, let $S_\theta$ be a restriction of $S^\nu(\mu)$ to $(-\infty,x^-_{\mu,\theta})$ together with an appropriate amount of mass $\alpha\leq S^\nu(\mu)(\{x^-_{\mu,\theta}\})$ such that $S_\theta(\mathbb{R})=S_\theta((-\infty,x^-_{\mu,\theta}])=\chi^-_{\mu,\theta}(\R)$. Then $P_{S_\theta}=P_{S^\nu(\mu)}$ on $(-\infty,x^-_{\mu,\theta}]$, $P_{S_\theta}$ is linear on $(x^-_{\mu,\theta},\infty)$ with slope $\chi^-_{\mu,\theta}(\R)$, $P_{S_{\theta}}(x^-_{\mu,\theta})+p_{S^\nu(\mu)}=P_{\chi^-_{\mu,\theta}}(x^-_{\mu,\theta})+p_{\theta}=P_\mu(x^-_{\mu,\theta})$ and $P_{\chi^-_{\mu,\theta}}+p_\theta\geq P_{S_{\theta}}+p_{S^\nu(\mu)}\geq P_\mu$ on $(-\infty, x^-_{\mu,\theta})$. It follows that $\eta^-_{\mu,\theta}\leq_{ci}S_{\theta}\leq_{ci}\chi^-_{\mu,\theta}$ and therefore
\begin{align}\label{eq:submartingale}
(\overline{\chi^-_{\mu,\theta}}-\overline{\eta^-_{\mu,\theta}})&\geq(\overline{S_\theta}-\overline{\eta^-_{\mu,\theta}})\\
&=(\overline{S_\theta-\chi^-_{\mu,S^\nu(\mu)}}-\overline{\eta^-_{\mu,\theta}-\eta^-_{\mu,S^\nu(\mu)}})+(\overline{\chi^-_{\mu,S^\nu(\mu)}}-\overline{\eta^-_{\mu,S^\nu(\mu)}})\nonumber\\
&=(\overline{\chi^-_{\mu,S^\nu(\mu)}}-\overline{\eta^-_{\mu,S^\nu(\mu)}})\nonumber.
\end{align}
The last equality is obtained as follows. Recall that $x^-_{\mu,S^\nu(\mu)}\leq x^-_{\mu,\theta}$, $P_{S_\theta}=P_{S^\nu(\mu)}$ on $(-\infty,x^-_{\mu,\theta}]$, $P_{S^\nu(\mu)}(x)+p_{S^\nu(\mu)}=P_\mu(x)$ for $x\in\{x^-_{\mu,S^\nu(\mu)},x^-_{\mu,\theta}\}$, and $P_{S_\theta}+p_{S^\nu(\mu)}\geq P_\mu$ on $(-\infty,x^-_{\mu,\theta}]$. Therefore $(\eta^-_{\mu,\theta}-\eta^-_{\mu,S^\nu(\mu)})\leq_c(S_\theta-\chi^-_{\mu,S^\nu(\mu)})$, and thus these measures have equal means. We conclude that $S^\nu(\mu)\in {\arg\min}_{\theta\in \mathcal T_{\mu,\nu}} V(\mu,\theta)$.

Furthermore, if $\hat\theta\in\argmin_{\sT_{\mu,\nu}}V(\mu,\theta)$, then we must have that $S_{\hat\theta}\leq_{ci}\chi^-_{\mu,\hat\theta}$ and \eqref{eq:submartingale} holds with equality. It follows that $S_{\hat\theta}\leq_{c}\chi^-_{\mu,\hat\theta}$ and therefore $p_{\hat\theta}=p_{S^\nu(\mu)}$. By symmetry, $c_{\hat\theta}=c_{S^\nu(\mu)}$, and, by combining Lemma \ref{lem:shadow_minimality} and Corollary \ref{cor:shadow_minimality}, we conclude that $S^\nu(\mu)\leq_c\hat\theta$. This finishes the proof.

\end{proof}

We next present the associativity property of the shadow measure. It is the main ingredient in the construction of the lifted shadow couplings.
\begin{lemma}
	\label{lem:shadow_assoc}
	Let $\mu,\nu\in\sM$ with $\mu(\R)\leq\nu(\R)$, and suppose that $\mu=\mu_1+\mu_2$ for some $\mu_1,\mu_2\in\sM$. Then 
	\begin{equation*}
	S^\nu(\mu_1+\mu_2)=S^\nu(\mu_1)+S^{\nu-S^\nu(\mu_1)}(\mu_2).
	\end{equation*}
	Furthermore, $p_{S^\nu(\mu)}=p_{S^\nu(\mu_1)}+p_{S^{\nu-S^\nu(\mu_1)}(\mu_2)}$.
\end{lemma}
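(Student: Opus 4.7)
The plan is to prove the identity by showing that $\theta := S^\nu(\mu_1) + S^{\tilde\nu}(\mu_2)$, where $\tilde\nu := \nu - S^\nu(\mu_1)$, has the same potential function as $S^\nu(\mu_1+\mu_2)$, and then appeal to uniqueness of the measure identified by its potential. A preliminary check shows that $\theta$ is a well-defined element of $\mathcal T_{\mu_1+\mu_2,\nu}$: since $S^\nu(\mu_1)\leq\nu$ with $S^\nu(\mu_1)(\R)=\mu_1(\R)$, we have $\tilde\nu\in\sM$ and $\tilde\nu(\R)=\nu(\R)-\mu_1(\R)\geq\mu_2(\R)$, so $S^{\tilde\nu}(\mu_2)$ is defined; moreover $\theta(\R)=(\mu_1+\mu_2)(\R)$ and $\theta\leq S^\nu(\mu_1)+\tilde\nu=\nu$.

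I would then compute $P_\theta$ by summing the two instances of Lemma \ref{lem:shadow_potential}. Using $P_{S^\nu(\mu_1)}+P_{\tilde\nu}=P_\nu$ the expression collapses to
$$P_\theta = P_\nu - (P_{\tilde\nu}-P_{\mu_2})^c - p_{\mu_2,\tilde\nu}.$$
Substituting $P_{\tilde\nu}=(P_\nu-P_{\mu_1})^c+p_{\mu_1,\nu}$ and pulling the additive constant out of the convex hull reduces the argument of the hull to $(P_\nu-P_{\mu_1})^c - P_{\mu_2}$. At this point I would invoke the identity
$$(f^c-g)^c = (f-g)^c \quad\text{whenever } g \text{ is convex},$$
a standard fact (cf.\ \cite[Lemma 2.4]{beiglbock2021shadow}), with $f=P_\nu-P_{\mu_1}$ and $g=P_{\mu_2}$. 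This collapses the double convex hull and yields
$$(P_{\tilde\nu}-P_{\mu_2})^c = (P_\nu-P_{\mu_1+\mu_2})^c + p_{\mu_1,\nu}.$$

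Letting $k\to-\infty$ in this last equality and recalling from Lemma \ref{lem:convex_hull_measure} that $(P_\nu-P_\eta)^c(k)\to -p_{\eta,\nu}$ immediately delivers $p_{\mu_2,\tilde\nu}=p_{\mu_1+\mu_2,\nu}-p_{\mu_1,\nu}$, which combined with Lemma \ref{lem:shadow_mean} (i.e.\ $p_{S^\chi(\eta)}=p_{\eta,\chi}$) is exactly the additivity claim $p_{S^\nu(\mu)}=p_{S^\nu(\mu_1)}+p_{S^{\tilde\nu}(\mu_2)}$. Substituting both identities back into the formula for $P_\theta$ gives
$$P_\theta = P_\nu - (P_\nu-P_{\mu_1+\mu_2})^c - p_{\mu_1+\mu_2,\nu} = P_{S^\nu(\mu_1+\mu_2)},$$
so uniqueness of the measure with a prescribed potential forces $\theta=S^\nu(\mu_1+\mu_2)$.

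The main technical hurdle is the convex-hull identity $(f^c-g)^c=(f-g)^c$ for convex $g$. One direction follows at once from $f^c\leq f$ and monotonicity of the convex hull; for the reverse I would argue that any affine minorant $\ell$ of $f^c-g$ satisfies $\ell+g\leq f^c\leq f$ (since $\ell+g$ is convex when $g$ is), hence $\ell\leq f-g$, which implies $\ell\leq(f-g)^c$. If one prefers, the identity can simply be quoted from \cite{beiglbock2021shadow}, in which case the rest of the proof is a careful bookkeeping of constants.
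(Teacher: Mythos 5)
Your proposal is correct and is essentially the paper's proof read in the opposite direction: you start from $P_\theta = P_{S^\nu(\mu_1)}+P_{S^{\tilde\nu}(\mu_2)}$ and reduce it to $P_{S^\nu(\mu_1+\mu_2)}$, whereas the paper starts from $P_{S^\nu(\mu_1+\mu_2)}$ and expands it; the key ingredients (Lemma \ref{lem:shadow_potential}, the identity $((P_\nu-P_{\mu_1})^c-P_{\mu_2})^c=(P_\nu-P_{\mu_1+\mu_2})^c$ from \cite[Lemma 2.4]{beiglbock2021shadow}, the fact that $P_\nu-P_{S^\nu(\mu_1)}=P_{\nu-S^\nu(\mu_1)}$, and the $k\to-\infty$ asymptotics for the constant) are identical.

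One small slip worth flagging in your optional inline proof of the hull identity $(f^c-g)^c=(f-g)^c$: for the ``reverse'' direction you should take an affine minorant $\ell$ of $f-g$, not of $f^c-g$. From $\ell\leq f-g$ one gets $\ell+g\leq f$, and then convexity of $\ell+g$ forces $\ell+g\leq f^c$, hence $\ell\leq f^c-g\leq(f^c-g)^c$; as written you prove $(f^c-g)^c\leq(f-g)^c$ twice. This does not affect the lemma since you also offer to cite the identity directly, which is what the paper does.
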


\begin{proof}
	Note that $\mu_2(\R)\leq(\nu-S^\nu(\mu_1))(\R)=\nu(\R)-\mu_1(\R)$, so that $\mathcal T_{\mu_2,\nu-S^\nu(\mu_1)}\neq\emptyset$, and therefore $S^{\nu-S^\nu(\mu_1)}(\mu_2)$ is well-defined.
	
By Lemma \ref{lem:shadow_potential} we have that, on $\R$,
	$$
	P_{S^\nu(\mu_1+\mu_2)}= P_\nu - \left((P_\nu-P_{\mu_1})-P_{\mu_2}\right)^c-p_{\mu_1+\mu_2,\nu}= P_\nu - \left((P_\nu-P_{\mu_1})^c-P_{\mu_2}\right)^c-p_{\mu_1+\mu_2,\nu},
	$$
	where the second equality follows from Beiglb\"ock et al. \cite[Lemma 2.4]{beiglbock2021shadow}. Now, using Lemma \ref{lem:shadow_potential} for the inner convex hull and taking the constant $p_{\mu_1,\nu}$ outside the outer convex hull we have that
	$$
	P_{S^\nu(\mu_1+\mu_2)}=P_\nu - \left((P_\nu-P_{S^\nu(\mu_1)})-P_{\mu_2}\right)^c+p_{\mu_1,\nu}-p_{\mu_1+\mu_2,\nu}\quad\textrm{on }\R.
	$$
	Since $S^\nu(\mu_1)\leq\nu$, $(\nu-S^\nu(\mu_1))\in\sM$, and therefore $(P_\nu-P_{S^\nu(\mu_1)})=P_{\nu-S^\nu(\mu_1)}$. It follows that
	\begin{align*}
	P_{S^\nu(\mu_1+\mu_2)}&=P_\nu - \left(P_{\nu-S^\nu(\mu_1)}-P_{\mu_2}\right)^c+p_{\mu_1,\nu}-p_{\mu_1+\mu_2,\nu}\\&=P_\nu -\left(P_{\nu-S^\nu(\mu_1)}-P_{S^{\nu-S^\nu(\mu_1)}(\mu_2)}\right)+p_{\mu_2,\nu-S^\nu(\mu_1)}+p_{\mu_1,\nu}-p_{\mu_1+\mu_2,\nu}\\
	&=P_{S^\nu(\mu_1)}+P_{S^{\nu-S^\nu(\mu_1)}(\mu_2)}+p_{\mu_2,\nu-S^\nu(\mu_1)}+p_{\mu_1,\nu}-p_{\mu_1+\mu_2,\nu}\quad\textrm{on }\R,
	\end{align*}	
where we used Lemma \ref{lem:shadow_potential} for the second equality. Taking second (distributional) derivatives proves the associativity property.

For the second assertion, note that, since $\lim_{k\to-\infty}P_\theta(k)=0$ for all $\theta\in\sM$, we must have that $p_{\mu_2,\nu-S^\nu(\mu_1)}+p_{\mu_1,\nu}=p_{\mu_1+\mu_2,\nu}$. Then Lemma \ref{lem:shadow_mean} completes the proof.
\end{proof}

We finish this paper with the result regarding the existence of shadow couplings.

In the rest of this section we work with $\mu,\nu\in\sP$ (or, more generally, with measures that have equal total mass). We first recall the notions of the lift of $\mu$ and the lifted couplings of $\mu$ and $\nu$, given in the introduction.

Let $\lambda=\Leb_{[0,1]}$ be the Lebesgue measure on $[0,1]$. A lift of $\mu$ is a measure $\hat\mu$ on $[0,1]\times\R$ with first and second marginals given by $\lambda$ and $\mu$, respectively (i.e., $\hat\mu\in\Pi(\lambda,\mu)$). For each $\hat\mu\in\Pi(\lambda,\nu)$, define a family of measures on $\R$, $(\hat\mu_{[0,u]})_{u\in[0,1]}$, by setting, for each $u\in[0,1]$,
$$\hat\mu_{[0,u]}(A)=\hat\mu([0,u]\times A),\quad\textrm{for all Borel}\quad A\subseteq\R.
$$
For a fixed lift $\hat\mu$, let $\hat\Pi(\hat\mu,\nu)$ be the set of probability measures on $[0,1]\times\R\times\R$, such that $\hat\pi(A\times B\times\R)=\hat\mu(A\times B)$ and $\hat\pi([0,1]\times\R\times B)=\nu(B)$, for all Borel $A\subseteq[0,1]$, $B\subseteq \R$.

\begin{theorem}\label{thm:shadowCouplings} 
    Let $\mu,\nu\in\sP$. For each lift $\hat\mu\in\Pi(\lambda,\nu)$, there exists the unique $\hat\pi\in\hat\Pi(\hat\mu,\nu)$, such that
    $$
\int^u_0 d\hat\pi\in\Pi(\hat\mu_{[0,u]},S^\nu(\hat\mu_{[0,u]})),\quad\textrm{for all }u\in[0,1].
    $$
    In particular, $\int^1_0d\hat\pi\in\Pi^*(\mu,\nu)$.
\end{theorem}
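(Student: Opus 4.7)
The plan is to construct $\hat\pi$ by first building a canonical ``shadow lift'' $\hat\nu\in\Pi(\lambda,\nu)$ derived from $\hat\mu$, and then gluing $\hat\mu$ and $\hat\nu$ along the common $[0,1]$-coordinate.

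First I would set $\theta_u:=S^\nu(\hat\mu_{[0,u]})$ for $u\in[0,1]$. The associativity of the shadow measure (Lemma~\ref{lem:shadow_assoc}), applied to the decomposition $\hat\mu_{[0,v]}=\hat\mu_{[0,u]}+(\hat\mu_{[0,v]}-\hat\mu_{[0,u]})$ for $u\leq v$, yields
\[
\theta_v-\theta_u=S^{\nu-\theta_u}\bigl(\hat\mu_{[0,v]}-\hat\mu_{[0,u]}\bigr)\geq 0,
\]
so $(\theta_u)$ is a non-decreasing curve of measures. Combined with $\theta_0=0$ and $\theta_1=S^\nu(\mu)=\nu$ (the latter identity holds because $\mu(\R)=\nu(\R)=1$ forces $\nu$ itself to be the only element of $\mathcal T_{\mu,\nu}$), this curve corresponds to a unique Borel measure $\hat\nu$ on $[0,1]\times\R$ with $\hat\nu([0,u]\times B)=\theta_u(B)$ for all Borel $B\subseteq\R$ and all $u\in[0,1]$; in particular $\hat\nu\in\Pi(\lambda,\nu)$.

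Next I would disintegrate $\hat\mu(du,dx)=du\,\hat\mu_u(dx)$ and $\hat\nu(du,dy)=du\,\hat\nu_u(dy)$ and define
\[
\hat\pi(du,dx,dy):=du\,\hat\mu_u(dx)\,\hat\nu_u(dy),
\]
the coupling that glues $\hat\mu$ and $\hat\nu$ through independent conditioning on $u$. Direct verification of marginals gives $\hat\pi\in\hat\Pi(\hat\mu,\nu)$, and for each $u\in[0,1]$ the measure $\int_0^u d\hat\pi$ has first marginal $\hat\mu_{[0,u]}$ and second marginal $\hat\nu_{[0,u]}=\theta_u=S^\nu(\hat\mu_{[0,u]})$, as required. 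Setting $u=1$ gives $\int_0^1 d\hat\pi\in\Pi(\mu,\nu)$; its membership in $\Pi^*(\mu,\nu)$ follows by combining Theorem~\ref{thm:shadow_minimalityWOT} (which says $S^\nu(\mu)=\nu$ lies in $\argmin_{\theta\in\mathcal T_{\mu,\nu}}V(\mu,\theta)$) with the characterization \eqref{eq:cotCouplings} of $\Pi^*(\mu,\nu)$ as the $V$-minimizers provided by Theorem~\ref{thm:coupling}.

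The hard part will be the uniqueness. My strategy is to first treat lifts for which $\hat\mu_u=\delta_{T(u)}$ for some measurable $T:[0,1]\to\R$ (e.g.\ the quantile lift), where the only coupling of $\delta_{T(u)}$ and $\hat\nu_u$ at each $u$ is the product $\delta_{T(u)}\otimes\hat\nu_u$, yielding uniqueness directly. General lifts $\hat\mu$ would be approximated by such deterministic lifts $\hat\mu^n$ in $W_1$, and the limit argument hinges on the $W_1$-stability of the shadow map $\mu\mapsto S^\nu(\mu)$, which can be extracted from the pointwise continuity of potential functions together with the explicit representation in Lemma~\ref{lem:shadow_potential}: the approximating curves $\theta^n_u$ converge to $\theta_u$ uniformly in $u$, and this is then transferred to weak convergence of the constructed $\hat\pi^n$ to the $\hat\pi$ built in the first two steps.
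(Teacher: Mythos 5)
Your construction of $\hat\pi$ as the $u$-conditionally independent product
$\hat\pi(du,dx,dy)=du\,\hat\mu_u(dx)\,\hat\nu_u(dy)$ is the wrong object, and the ``in particular'' claim does not follow from it. The shadow coupling must at each level $u$ be the hitting (Kellerer-dilation) coupling, not the product. Concretely, take $\mu=\tfrac12(\delta_{-1}+\delta_1)$, $\nu=\tfrac12(\delta_{-2}+\delta_2)$, and the constant lift $\hat\mu_u=\mu$. Then $\hat\mu_{[0,u]}=u\mu$, $S^\nu(u\mu)=\tfrac{u}{2}(\delta_{-2}+\delta_2)$, so $\hat\nu_u=\tfrac12(\delta_{-2}+\delta_2)$; your product gives $\pi:=\int_0^1 d\hat\pi=\tfrac14\big(\delta_{(-1,-2)}+\delta_{(-1,2)}+\delta_{(1,-2)}+\delta_{(1,2)}\big)$, whose disintegration satisfies $\overline{\pi_{-1}}=0\ne -1$; thus $\pi\notin\Pi_M(\mu,\nu)=\Pi^*(\mu,\nu)$. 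So your candidate satisfies the marginal constraint but fails the ``in particular'' conclusion. Relatedly, the argument you give for $\int_0^1 d\hat\pi\in\Pi^*(\mu,\nu)$ is a non sequitur: Theorem~\ref{thm:shadow_minimalityWOT} minimizes $\theta\mapsto V(\mu,\theta)$ over target laws $\theta\in\mathcal T_{\mu,\nu}$ (and when $\mu(\R)=\nu(\R)$ this set is the singleton $\{\nu\}$, so it is vacuous), whereas membership in $\Pi^*(\mu,\nu)$ is a pointwise structural property of the conditional kernel $\pi_x$, characterized by \eqref{eq:L1optimal}. Knowing the marginals of a coupling of $\mu$ and $\nu$ does not make it a $V$-minimizer.

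The missing idea, and the core of the paper's proof, is the decomposition Lemma~\ref{lem:decomposition}: $S^\nu(\hat\mu_{[0,u]})$ splits into the three shadow measures of $\hat\mu^-_{[0,u]},\hat\mu^0_{[0,u]},\hat\mu^+_{[0,u]}$ in $\chi^-_{\mu,\nu},\chi^0_{\mu,\nu},\chi^+_{\mu,\nu}$. The paper then invokes the known existence and uniqueness of the \emph{lifted} shadow couplings in the submartingale, martingale, and supermartingale settings (from \cite{beiglbock2021shadow} and \cite{BayDengNorgilas2}), which are Kellerer-dilation couplings and hence respect \eqref{eq:optimal_pi}, and stitches them together as $\hat\pi=\hat\pi^-+\hat\pi^0+\hat\pi^+$. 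Uniqueness is inherited from the uniqueness in each of the three regimes. Your approach has no mechanism to recover this dilation structure; the approximation argument you sketch for uniqueness would at most identify a limit of the correct couplings, not rule out competitors, and it would in any case converge to the paper's $\hat\pi$, not to your product coupling. You would need to replace the product-at-each-$u$ by the hitting coupling $\pi_{\hat\mu_u,T(u)}$ and re-derive its shadow-compatibility, which is exactly the content the paper imports from the referenced works.
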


Several canonical lifts (that then lead to canonical lifted couplings using Theorem \ref{thm:shadowCouplings}) can be found in \cite{beiglbock2021shadow}.

In order to prove Theorem \ref{thm:shadowCouplings}, we will need one auxiliary result. Recall the definitions of $x^-_{\mu,\nu},x^+_{\mu,\nu}$ (given by \eqref{eq:x-x+}), of $\eta^-_{\mu,\nu},\eta^0_{\mu,\nu},\eta^+_{\mu,\nu}$ (given by \eqref{eq:mu_decomposition}), and of $\chi^-_{\mu,\nu},\chi^0_{\mu,\nu},\chi^+_{\mu,\nu}$ (given by \eqref{eq:nu_decompisition}).
\begin{lemma}\label{lem:decomposition}
Let $\mu,\nu\in\sP$. For each lift $\hat\mu\in\Pi(\lambda,\nu)$ define
$$
\hat\mu^-_{[0,u]}=\hat\mu_{[0,u]}\lvert_{(-\infty,x^-_{\mu,\nu})},\quad \hat\mu^0_{[0,u]}=\hat\mu_{[0,u]}\lvert_{[x^-_{\mu,\nu},x^+_{\mu,\nu}]},\quad \hat\mu^+_{[0,u]}=\hat\mu_{[0,u]}\lvert_{(x^+_{\mu,\nu},\infty)},\quad\textrm{for all }u\in[0,1].
$$
Then
$$
S^\nu(\hat\mu_{[0,u]})=S^{\chi^-_{\mu,\nu}}(\hat\mu^-_{[0,u]})+S^{\chi^0_{\mu,\nu}}(\hat\mu^0_{[0,u]})+S^{\chi^+_{\mu,\nu}}(\hat\mu^+_{[0,u]}),\quad\textrm{for all }u\in[0,1].
$$
\end{lemma}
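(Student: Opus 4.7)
The plan is to combine the associativity property of the shadow measure (Lemma \ref{lem:shadow_assoc}) with a localization argument identifying each arising shadow with its counterpart on the corresponding restriction of $\nu$. Applying Lemma \ref{lem:shadow_assoc} twice to the decomposition $\hat\mu_{[0,u]} = \hat\mu^-_{[0,u]} + \hat\mu^0_{[0,u]} + \hat\mu^+_{[0,u]}$ yields
\begin{equation*}
S^\nu(\hat\mu_{[0,u]}) = S^\nu(\hat\mu^-_{[0,u]}) + S^{\nu'}(\hat\mu^0_{[0,u]}) + S^{\nu''}(\hat\mu^+_{[0,u]}),
\end{equation*}
where $\nu' := \nu - S^\nu(\hat\mu^-_{[0,u]})$ and $\nu'' := \nu' - S^{\nu'}(\hat\mu^0_{[0,u]})$, so the proof reduces to establishing the three localizations $S^\nu(\hat\mu^-_{[0,u]}) = S^{\chi^-_{\mu,\nu}}(\hat\mu^-_{[0,u]})$, $S^{\nu'}(\hat\mu^0_{[0,u]}) = S^{\chi^0_{\mu,\nu}}(\hat\mu^0_{[0,u]})$, and $S^{\nu''}(\hat\mu^+_{[0,u]}) = S^{\chi^+_{\mu,\nu}}(\hat\mu^+_{[0,u]})$.

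The crux is the first localization; the other two follow by the same method once it is in hand. To prove it, I would invoke the variational characterization of Theorem \ref{thm:shadow_minimalityWOT}: the candidate $S^{\chi^-_{\mu,\nu}}(\hat\mu^-_{[0,u]})$ already belongs to $\mathcal T_{\hat\mu^-_{[0,u]}, \chi^-_{\mu,\nu}} \subseteq \mathcal T_{\hat\mu^-_{[0,u]}, \nu}$, so it suffices to show that it still minimizes $V(\hat\mu^-_{[0,u]}, \cdot)$ over the enlarged target set $\mathcal T_{\hat\mu^-_{[0,u]}, \nu}$ and is minimal in convex order among such $V$-minimizers. Given an arbitrary $\theta \in \mathcal T_{\hat\mu^-_{[0,u]}, \nu}$, I would construct a leftward rearrangement $\theta'$ by transporting the mass of $\theta$ on $(x^-_{\mu,\nu}, \infty)$ into the unoccupied portion of $\chi^-_{\mu,\nu}$. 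This rearrangement is feasible because $\hat\mu^-_{[0,u]}(\R) \leq \eta^-_{\mu,\nu}(\R) = \chi^-_{\mu,\nu}(\R)$ guarantees enough room under $\chi^-_{\mu,\nu}$, and since $\hat\mu^-_{[0,u]}$ is supported on $(-\infty, x^-_{\mu,\nu})$, such an inward shift weakly reduces the weak $L^1$ cost $V(\hat\mu^-_{[0,u]}, \cdot)$. Combining this with the minimality of $S^{\chi^-_{\mu,\nu}}(\hat\mu^-_{[0,u]})$ inside $\mathcal T_{\hat\mu^-_{[0,u]}, \chi^-_{\mu,\nu}}$ (Theorem \ref{thm:shadow_minimalityWOT} applied to the pair $(\hat\mu^-_{[0,u]}, \chi^-_{\mu,\nu})$) delivers the claim.

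After step one, the residual measure is $\nu' = (\chi^-_{\mu,\nu} - S^{\chi^-_{\mu,\nu}}(\hat\mu^-_{[0,u]})) + \chi^0_{\mu,\nu} + \chi^+_{\mu,\nu}$, whose components outside the middle interval are supported in $(-\infty, x^-_{\mu,\nu}]$ and $[x^+_{\mu,\nu}, \infty)$, respectively. Since $\hat\mu^0_{[0,u]}$ lives on $[x^-_{\mu,\nu}, x^+_{\mu,\nu}]$, an analogous two-sided inward-rearrangement argument (sending target mass on either side toward the middle) shows that its shadow in $\nu'$ coincides with its shadow in $\chi^0_{\mu,\nu}$; the third identification follows likewise. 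The main obstacle will be making the rearrangement argument rigorous, especially at the boundary points where $\chi^-_{\mu,\nu}$ and $\chi^+_{\mu,\nu}$ may carry atoms and small amounts of mass need to be reassigned precisely. An alternative, perhaps cleaner, route is to verify each localization directly via the potential-function characterization of the shadow in Lemma \ref{lem:shadow_potential} together with the observation that $P_\nu = P_{\chi^-_{\mu,\nu}}$ on $(-\infty, x^-_{\mu,\nu}]$ (and the analogous coincidences for the middle and right regions), so that the convex hull $(P_\nu - P_{\hat\mu^-_{[0,u]}})^c$ picks up no extra contribution from the unrelated parts of $\nu$.
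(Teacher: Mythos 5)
Your high-level skeleton matches the paper's: decompose $\hat\mu_{[0,u]}$ into three pieces, apply the associativity of the shadow (Lemma \ref{lem:shadow_assoc}) to peel them off one at a time, and then argue that each resulting shadow ``localizes'' to the appropriate restriction of $\nu$. The difference lies entirely in how the localization is proved, and there your argument has a genuine gap.

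The paper's localization is a clean two-step reduction. First, it identifies the shadow of the \emph{full} tail piece: using Theorem \ref{thm:shadow_minimalityWOT} together with Lemmas \ref{lem:WOTsuper} and \ref{lem:mu_nu_decomposition}, it shows that $\chi^+_{\mu,\nu}$ is the \emph{unique} minimizer of $\theta\mapsto V(\eta^+_{\mu,\nu},\theta)$ over $\mathcal T_{\eta^+_{\mu,\nu},\nu}$ (the point being that $\chi^+_{\mu,\nu}$, as a restriction of $\nu$ to a right tail, is simultaneously $\leq_{cd}$-minimal and $\leq_{sto}$-maximal, so the mean-gap $\overline{\eta^+}-\overline{\theta}$ is strictly minimized at $\theta=\chi^+$), hence $S^\nu(\eta^+_{\mu,\nu})=\chi^+_{\mu,\nu}$. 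Second, it descends from the full tail $\eta^+_{\mu,\nu}$ to the sub-measure $\hat\mu^+_{[0,u]}\leq\eta^+_{\mu,\nu}$ via the external transitivity result of Bayraktar et al.~[BDN, Lemma~3.12], which directly gives $S^\nu(\hat\mu^+_{[0,u]})=S^{S^\nu(\eta^+_{\mu,\nu})}(\hat\mu^+_{[0,u]})=S^{\chi^+_{\mu,\nu}}(\hat\mu^+_{[0,u]})$. It then applies associativity, repeats the argument for the submartingale region, and once more for the martingale region. You are missing this second step entirely: you attempt to prove $S^\nu(\hat\mu^-_{[0,u]})=S^{\chi^-_{\mu,\nu}}(\hat\mu^-_{[0,u]})$ in one shot by a ``leftward rearrangement'' of an arbitrary $\theta\in\mathcal T_{\hat\mu^-_{[0,u]},\nu}$, but (i) you would still need to verify that the rearranged $\theta'$ dominates $\hat\mu^-_{[0,u]}$ in $\leq_{ci}$ so that Lemma \ref{lem:WOTsuper} lets you compare costs via means, (ii) showing $V$ does not increase is not enough by Theorem \ref{thm:shadow_minimalityWOT} --- you also need $\leq_c$-minimality among minimizers, which your sketch does not address, and (iii) as you yourself flag, the handling of atoms at $x^-_{\mu,\nu}$ is delicate. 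The alternative route you gesture at (via $P_\nu=P_{\chi^-_{\mu,\nu}}$ on $(-\infty,x^-_{\mu,\nu}]$) also does not close the argument by itself, since the convex hull $(P_\nu-P_{\hat\mu^-_{[0,u]}})^c$ is a global operation and you would need a separate argument (essentially the content of the BDN transitivity lemma, or of the decomposition $P_\nu-P_{\hat\mu^-_{[0,u]}}=(P_{\chi^-_{\mu,\nu}}-P_{\hat\mu^-_{[0,u]}})+(P_\nu-P_{\chi^-_{\mu,\nu}})$ with the second summand convex) to show that the hull is unaffected by the behaviour of $P_\nu$ to the right of $x^-_{\mu,\nu}$.

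In short: the shape of the proof is right, but the crucial localization lemma --- the paper's combination of ``shadow of the full tail is the full restriction of $\nu$'' with the BDN transitivity property of the shadow --- is replaced in your proposal by an ad hoc rearrangement argument that is not carried through and, as stated, would not establish either the minimality in $V$ or the $\leq_c$-minimality required by Theorem \ref{thm:shadow_minimalityWOT}.
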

\begin{proof}
We use similar arguments as in the proof of Theorem \ref{thm:coupling}.

First, by Lemma \ref{lem:mu_nu_decomposition}, and the fact that $\chi^+_{\mu,\nu}$ is the restriction of $\nu$ to $(x^+_{\mu,\nu},\infty)$ (together with an appropriate amount of mass at $x^+_{\mu,\nu}$), we have that $\eta^+_{\mu,\nu}\leq_{cd}\chi^+_{\mu,\nu}\leq_{cd}\theta^+$ and $\overline{\chi^+_{\mu,\nu}}>\overline{\theta^+}$ for any other possible target law $\theta^+\in\mathcal{T}_{\eta^+_{\mu,\nu},\nu}$ (of $\eta^+_{\mu,\nu}$ in $\nu$). Using Lemma \ref{lem:WOTsuper} we then conclude that
\begin{align}\label{eq:minimalEta}
\inf_{\pi\in\Pi(\eta^+_{\mu,\nu},\chi^+_{\mu,\nu})}\int_\R\left \lvert x-\int_\R y\pi_x(dy)\right\lvert\eta^+_{\mu,\nu}(dx)&=\overline{\eta^+_{\mu,\nu}}-\overline{\chi^+_{\mu,\nu}}\nonumber\\&<\overline{\eta^+_{\mu,\nu}}-\overline{\theta^+}\\&=\inf_{\pi\in\Pi(\eta^+_{\mu,\nu},\theta^+)}\int_\R\left \lvert x-\int_\R y\pi_x(dy)\right\lvert\eta^+_{\mu,\nu}(dx)\nonumber,
\end{align}
and thus $\chi^+_{\mu,\nu}$ is the unique element of $\argmin_{\theta^+\in\mathcal{T}_{\eta^+_{\mu,\nu},\nu}}V(\eta^+_{\mu,\nu},\theta^+)$. It follows that $S^\nu(\eta^+_{\mu,\nu})=\chi^+_{\mu,\nu}$. (Note that in this case $S^\nu(\eta^+_{\mu,\nu})$ reduces to the supermartingale shadow measure of \cite{NutzStebegg.18}.)

Now observe that $\hat\mu^+_{\mu,\nu}\leq\eta^+_{\mu,\nu}\leq_{cd}\chi^+_{\mu,\nu}\leq \nu$, and therefore $\hat\mu^+_{\mu,\nu}\leq\eta^+_{\mu,\nu}\leq_{pcd}\nu$. Hence, by applying \cite[Lemma 3.12]{BayDengNorgilas} we obtain that $\hat\mu^+_{\mu,\nu}\leq_{pcd}S^\nu(\eta^+_{\mu,\nu})$ and
$$
S^\nu(\hat\mu^+_{[0,u]})=S^{S^\nu(\eta^+_{\mu,\nu})}(\hat\mu^+_{[0,u]})=S^{\chi^+_{\mu,\nu}}(\hat\mu^+_{[0,u]})
$$
(where we again use that all the shadow measures are in fact the supermartingale shadow measures of \cite{NutzStebegg.18}). By the associativity of the shadow measure (Lemma \ref{lem:shadow_assoc}) it then follows that
$$
S^\nu(\hat\mu_{[0,u]})=S^{\chi^+_{\mu,\nu}}(\hat\mu^+_{[0,u]})+S^{\nu-S^{\chi^+_{\mu,\nu}}(\hat\mu^+_{[0,u]})}(\hat\mu^-_{[0,u]}+\hat\mu^0_{[0,u]}).
$$

We are left to show that
\begin{equation}\label{eq:remaining}
S^{\nu-S^{\chi^+_{\mu,\nu}}(\hat\mu^+_{[0,u]})}(\hat\mu^-_{[0,u]}+\hat\mu^0_{[0,u]})=S^{\chi^-_{\mu,\nu}}(\hat\mu^-_{[0,u]})+S^{\chi^0_{\mu,\nu}}(\hat\mu^0_{[0,u]}).
\end{equation}

First note that
$$
\eta^-_{\mu,\nu}+\eta^0_{\mu,\nu}\leq_{ci}\chi^-_{\mu,\nu}+\chi^0_{\mu,\nu}\leq \nu-S^{\chi^+_{\mu,\nu}}(\hat\mu^+_{[0,u]}).
$$
Furthermore, $\chi^-_{\mu,\nu}+\chi^0_{\mu,\nu}$ is a restriction of $\nu-S^{\chi^+_{\mu,\nu}}(\hat\mu^+_{[0,u]})$ to $(-\infty,x^+_{\mu,\nu})$ (again, with an appropriate amount of mass at $x^+_{\mu,\nu}$). Hence,
$$
\eta^-_{\mu,\nu}+\eta^0_{\mu,\nu}\leq_{ci}\chi^-_{\mu,\nu}+\chi^0_{\mu,\nu}\leq_{ci}\theta\quad\textrm{and}\quad\overline{\chi^-_{\mu,\nu}+\chi^0_{\mu,\nu}\leq_{ci}}<\overline{\theta}
$$
for any other $\theta\in\mathcal{T}_{\eta^-_{\mu,\nu}+\eta^0_{\mu,\nu},\nu-S^{\chi^+_{\mu,\nu}}(\hat\mu^+_{[0,u]})}$. Using Lemma \ref{lem:WOTsuper}, and similarly as in \eqref{eq:minimalEta}, we conclude that $S^{\nu-S^{\chi^+_{\mu,\nu}}(\hat\mu^+_{[0,u]})}(\eta^-_{\mu,\nu}+\eta^0_{\mu,\nu})=\chi^-_{\mu,\nu}+\chi^0_{\mu,\nu}$.

Furthermore, since $\hat\mu^-_{[0,u]}+\hat\mu^0_{[0,u]}\leq \eta^-_{\mu,\nu}+\eta^0_{\mu,\nu}$, we also have that $\hat\mu^-_{[0,u]}+\hat\mu^0_{[0,u]}\leq \eta^-_{\mu,\nu}+\eta^0_{\mu,\nu}\leq_{pci}\nu-S^{\chi^+_{\mu,\nu}}(\hat\mu^+_{[0,u]})$. Then using a submartingale version of \cite[Lemma 3.12]{BayDengNorgilas} we obtain that 
$$
\hat\mu^-_{[0,u]}+\hat\mu^0_{[0,u]}\leq_{pci} S^{\nu-S^{\chi^+_{\mu,\nu}}(\hat\mu^+_{[0,u]})}(\eta^-_{\mu,\nu}+\eta^0_{\mu,\nu})=\chi^-_{\mu,\nu}+\chi^0_{\mu,\nu}
$$
and
$$
S^{\nu-S^{\chi^+_{\mu,\nu}}(\hat\mu^+_{[0,u]})}(\hat\mu^-_{[0,u]}+\hat\mu^0_{[0,u]})=S^{S^{\nu-S^{\chi^+_{\mu,\nu}}(\hat\mu^+_{[0,u]})}(\eta^-_{\mu,\nu}+\eta^0_{\mu,\nu})}(\hat\mu^-_{[0,u]}+\hat\mu^0_{[0,u]})=S^{\chi^-_{\mu,\nu}+\chi^0_{\mu,\nu}}(\hat\mu^-_{[0,u]}+\hat\mu^0_{[0,u]}).
$$
Then using \eqref{eq:remaining}, it follows that we are left to show that
$$
S^{\chi^-_{\mu,\nu}+\chi^0_{\mu,\nu}}(\hat\mu^-_{[0,u]}+\hat\mu^0_{[0,u]})=S^{\chi^-_{\mu,\nu}}(\hat\mu^-_{[0,u]})+S^{\chi^0_{\mu,\nu}}(\hat\mu^0_{[0,u]}).
$$

Applying the associativity of the shadow measure we have that
$$
S^{\chi^-_{\mu,\nu}+\chi^0_{\mu,\nu}}(\hat\mu^-_{[0,u]}+\hat\mu^0_{[0,u]})=S^{\chi^-_{\mu,\nu}+\chi^-_{\mu,\nu}}(\hat\mu^-_{[0,u]})+S^{\chi^-_{\mu,\nu}+\chi^0_{\mu,\nu}-S^{\chi^-_{\mu,\nu}+\chi^-_{\mu,\nu}}(\hat\mu^-_{[0,u]})}(\hat\mu^0_{[0,u]}),
$$
and thus we need to show that
$$
S^{\chi^-_{\mu,\nu}+\chi^-_{\mu,\nu}}(\hat\mu^-_{[0,u]})=S^{\chi^-_{\mu,\nu}}(\hat\mu^-_{[0,u]})\quad\textrm{and}\quad S^{\chi^-_{\mu,\nu}+\chi^0_{\mu,\nu}-S^{\chi^-_{\mu,\nu}+\chi^-_{\mu,\nu}}(\hat\mu^-_{[0,u]})}(\hat\mu^0_{[0,u]})=S^{\chi^0_{\mu,\nu}}(\hat\mu^0_{[0,u]}).
$$
This can be done by repeating the above arguments, adapted to the submartingale and martingale settings, respectively.
\end{proof}
\begin{proof}[Proof of Theorem \ref{thm:shadowCouplings}]
    For each lift $\hat\mu\in\Pi(\lambda,\mu)$, and the corresponding parametrization $(\hat\mu_{[0,u]})_{u\in[0,1]}$, let $(\hat\mu^-_{[0,u]})_{u\in[0,1]}$, $(\hat\mu^0_{[0,u]})_{u\in[0,1]}$ and $(\hat\mu^+_{[0,u]})_{u\in[0,1]}$ be the families of measures as in Lemma \ref{lem:decomposition}. Note that these families of measures correspond (up to the scaling with total mass) to the lifts of $\eta^-_{\mu}$, $\eta^0_{\mu,\nu}$ and $\eta^+_{\mu,\nu}$, respectively. Then using the results of \cite{beiglbock2021shadow} and \cite{BayDengNorgilas2}, we have that there exists the unique couplings $\hat\pi^-\in\Pi_{Sub}(\eta^-_{\mu,\nu},\chi^-_{\mu,\nu}),\hat\pi^0\in\Pi_{M}(\eta^0_{\mu,\nu},\chi^0_{\mu,\nu}),\hat\pi^+\in\Pi_{Sup}(\eta^+_{\mu,\nu},\chi^+_{\mu,\nu})$ such that, for all $u\in[0,1]$,
    \begin{align}\label{eq:decompProperties}
\int^u_0d\hat\pi^-\in&\Pi_{Sub}(\hat\mu^-_{[0,u]},S^{\chi^-_{\mu,\nu}}(\hat\mu^-_{[0,u]})),\nonumber\\
\int^u_0d\hat\pi^0\in&\Pi_{M}(\hat\mu^0_{[0,u]},S^{\chi^0_{\mu,\nu}}(\hat\mu^0_{[0,u]})),\\
\int^u_0d\hat\pi^+\in&\Pi_{Sup}(\hat\mu^+_{[0,u]},S^{\chi^+_{\mu,\nu}}(\hat\mu^+_{[0,u]})).\nonumber
    \end{align}
    Then, if we define $\hat\pi=\hat\pi^-+\hat\pi^0+\hat\pi^+$, using Lemma \ref{lem:decomposition} we conclude that $\hat\pi$ satisfies the required properties.

    For the uniqueness, if $\tilde\pi$ is another element of $\hat\Pi(\hat\mu,\nu)$ that satisfies the stated properties, then using Lemma \ref{lem:decomposition} and the uniqueness of couplings $\hat\pi^-\in\Pi_{Sub}(\eta^-_{\mu,\nu},\chi^-_{\mu,\nu}),\hat\pi^0\in\Pi_{M}(\eta^0_{\mu,\nu},\chi^0_{\mu,\nu}),\hat\pi^+\in\Pi_{Sup}(\eta^+_{\mu,\nu},\chi^+_{\mu,\nu})$ satisfying \eqref{eq:decompProperties}, we conclude that $\tilde\pi=\hat\pi^-+\hat\pi^0+\hat\pi^+=\hat\pi$.
    \end{proof}
\bibliographystyle{plain}   
\bibliography{references}

\begin{thebibliography}{10}

\bibitem{alfonsi2017sampling}
Aur{\'e}lien Alfonsi, Jacopo Corbetta, and Benjamin Jourdain.
\newblock Sampling of probability measures in the convex order and
  approximation of martingale optimal transport problems.
\newblock {\em Available at SSRN 3072356}, 2017.

\bibitem{alibert2019new}
J-J Alibert, Guy Bouchitt{\'e}, and Thierry Champion.
\newblock A new class of costs for optimal transport planning.
\newblock {\em European Journal of Applied Mathematics}, 30(6):1229--1263,
  2019.

\bibitem{backhoff2020adapted}
Julio Backhoff-Veraguas, Daniel Bartl, Mathias Beiglb{\"o}ck, and Manu Eder.
\newblock Adapted wasserstein distances and stability in mathematical finance.
\newblock {\em Finance and Stochastics}, 24(3):601--632, 2020.

\bibitem{backhoff2020martingale}
Julio Backhoff-Veraguas, Mathias Beiglb{\"o}ck, Martin Huesmann, and Sigrid
  K{\"a}llblad.
\newblock Martingale benamou--brenier: a probabilistic perspective.
\newblock {\em The Annals of Probability}, 48(5):2258--2289, 2020.

\bibitem{backhoff2019existence}
Julio Backhoff-Veraguas, Mathias Beiglb{\"o}ck, and Gudmun Pammer.
\newblock Existence, duality, and cyclical monotonicity for weak transport
  costs.
\newblock {\em Calculus of Variations and Partial Differential Equations},
  58(6):203, 2019.

\bibitem{backhoff2020weak}
Julio Backhoff-Veraguas, Mathias Beiglb{\"o}ck, and Gudmund Pammer.
\newblock Weak monotone rearrangement on the line.
\newblock {\em Electronic Communications in Probability}, 25:1--16, 2020.

\bibitem{backhoff2022applications}
Julio Backhoff-Veraguas and Gudmund Pammer.
\newblock Applications of weak transport theory.
\newblock {\em Bernoulli}, 28(1):370--394, 2022.

\bibitem{backhoff2022stability}
Julio Backhoff-Veraguas and Gudmund Pammer.
\newblock Stability of martingale optimal transport and weak optimal transport.
\newblock {\em The Annals of Applied Probability}, 32(1):721--752, 2022.

\bibitem{BayDengNorgilas}
Erhan Bayraktar, Shuoqing Deng, and Dominykas Norgilas.
\newblock A potential-based construction of the increasing supermartingale
  coupling.
\newblock {\em arXiv preprint arXiv:2108.03450 (to appear in the Annals of
  Applied Probability)}, 2021.

\bibitem{BayDengNorgilas2}
Erhan Bayraktar, Shuoqing Deng, and Dominykas Norgilas.
\newblock Supermartingale shadow couplings: the decreasing case.
\newblock {\em arXiv preprint arXiv:2207.11732}, 2022.

\bibitem{BHLP13}
Mathias Beiglb{\"o}ck, Pierre Henry-Labordere, and Friedrich Penkner.
\newblock Model-independent bounds for option prices—a mass transport
  approach.
\newblock {\em Finance and Stochastics}, 17(3):477--501, 2013.

\bibitem{beiglbock2022potential}
Mathias Beiglb{\"o}ck, David Hobson, and Dominykas Norgilas.
\newblock The potential of the shadow measure.
\newblock {\em Electronic Communications in Probability}, 27:1--12, 2022.

\bibitem{BeiglbockJuillet:16}
Mathias Beiglb{\"o}ck and Nicolas Juillet.
\newblock On a problem of optimal transport under marginal martingale
  constraints.
\newblock {\em The Annals of Probability}, 44(1):42--106, 2016.

\bibitem{beiglbock2021shadow}
Mathias Beiglb{\"o}ck and Nicolas Juillet.
\newblock Shadow couplings.
\newblock {\em Transactions of the American Mathematical Society},
  374(7):4973--5002, 2021.

\bibitem{chacon1977potential}
Rafael~V Chacon.
\newblock Potential processes.
\newblock {\em Transactions of the American Mathematical Society}, 226:39--58,
  1977.

\bibitem{chacon1976one}
Rafael~V Chacon and John~B Walsh.
\newblock One-dimensional potential embedding.
\newblock {\em S{\'e}minaire de probabilit{\'e}s de Strasbourg}, 10:19--23,
  1976.

\bibitem{fathi2018curvature}
Max Fathi and Yan Shu.
\newblock Curvature and transport inequalities for markov chains in discrete
  spaces.
\newblock {\em Bernoulli}, 24(1):672--698, 2018.

\bibitem{gozlan2020mixture}
Nathael Gozlan and Nicolas Juillet.
\newblock On a mixture of brenier and strassen theorems.
\newblock {\em Proceedings of the London Mathematical Society},
  120(3):434--463, 2020.

\bibitem{gozlan2018characterization}
Nathael Gozlan, Cyril Roberto, Paul-Marie Samson, Yan Shu, and Prasad Tetali.
\newblock Characterization of a class of weak transport-entropy inequalities on
  the line.
\newblock {\em Ann. Inst. Henri Poincar\'{e} Probab. Stat.}, 54(3):1667--1693,
  2018.

\bibitem{gozlan2017kantorovich}
Nathael Gozlan, Cyril Roberto, Paul-Marie Samson, and Prasad Tetali.
\newblock Kantorovich duality for general transport costs and applications.
\newblock {\em Journal of Functional Analysis}, 273(11):3327--3405, 2017.

\bibitem{hirsch2012new}
Francis Hirsch and Bernard Roynette.
\newblock A new proof of kellerer’s theorem.
\newblock {\em ESAIM: Probability and Statistics}, 16:48--60, 2012.

\bibitem{HobsonKlimmek}
David Hobson and Martin Klimmek.
\newblock Model-independent hedging strategies for variance swaps.
\newblock {\em Finance and Stochastics}, 16(4):611--649, 2012.

\bibitem{hobson2015robust}
David Hobson and Martin Klimmek.
\newblock Robust price bounds for the forward starting straddle.
\newblock {\em Finance and Stochastics}, 19(1):189--214, 2015.

\bibitem{HobsonNeuberger}
David Hobson and Anthony Neuberger.
\newblock Robust bounds for forward start options.
\newblock {\em Mathematical Finance: An International Journal of Mathematics,
  Statistics and Financial Economics}, 22(1):31--56, 2012.

\bibitem{hobson2019left}
David~G Hobson and Dominykas Norgilas.
\newblock The left-curtain martingale coupling in the presence of atoms.
\newblock {\em The Annals of Applied Probability}, 29(3):1904--1928, 2019.

\bibitem{Marton1996BoundingB}
Katalin Marton.
\newblock Bounding $\bar\{d\}$-distance by informational divergence: a method
  to prove measure concentration.
\newblock {\em Annals of Probability}, 24(2):857--866, 1996.

\bibitem{marton1996measure}
Katalin Marton.
\newblock A measure concentration inequality for contracting markov chains.
\newblock {\em Geometric \& Functional Analysis GAFA}, 6(3):556--571, 1996.

\bibitem{NutzStebegg.18}
Marcel Nutz and Florian Stebegg.
\newblock Canonical supermartingale couplings.
\newblock {\em The Annals of Probability}, 46(6):3351--3398, 2018.

\bibitem{shu2018hamilton}
Yan Shu.
\newblock Hamilton-jacobi equations on graph and applications.
\newblock {\em Potential Analysis}, 48(2):125--157, 2018.

\bibitem{shu2020hopf}
Yan Shu.
\newblock From hopf--lax formula to optimal weak transfer plan.
\newblock {\em SIAM Journal on Mathematical Analysis}, 52(3):3052--3072, 2020.

\bibitem{strassen1965existence}
Volker Strassen.
\newblock The existence of probability measures with given marginals.
\newblock {\em The Annals of Mathematical Statistics}, 36(2):423--439, 1965.

\bibitem{talagrand1995concentration}
Michel Talagrand.
\newblock Concentration of measure and isoperimetric inequalities in product
  spaces.
\newblock {\em Publications Math{\'e}matiques de l'Institut des Hautes Etudes
  Scientifiques}, 81:73--205, 1995.

\bibitem{talagrand1996new}
Michel Talagrand.
\newblock New concentration inequalities in product spaces.
\newblock {\em Inventiones mathematicae}, 126(3):505--563, 1996.

\end{thebibliography}
	
\end{document}